\numberwithin{equation}{section}
\newtheorem{lemma}{Lemma}
\newtheorem{theorem}{Theorem}
\newtheorem{proposition}{Proposition}
\newtheorem{corollary}{Corollary}
\theoremstyle{remark}
\newtheorem{remark}{Remark}
\numberwithin{remark}{section}
\begin{document}

\title{\textbf{Optimal ordering policy for inventory systems with
quantity-dependent setup costs}}

\author{Shuangchi He\thanks{Department of Industrial and Systems Engineering, National University of Singapore, heshuangchi@nus.edu.sg} \and
Dacheng Yao\thanks{Academy of Mathematics and Systems Science, Chinese Academy
of Sciences, dachengyao@amss.ac.cn} \and
Hanqin Zhang\thanks{Department of Decision Sciences, NUS Business School,
National University of Singapore, bizzhq@nus.edu.sg}}

\date{March 18, 2016}

\maketitle

\vspace{-0.1in}
\begin{abstract}
We consider a continuous-review inventory system in which the setup cost of each order is a general function of the order quantity and the demand process is modeled as a Brownian motion with a positive drift. Assuming the holding and shortage cost to be a convex function of the inventory level, we obtain the optimal ordering policy that minimizes the long-run average cost by a lower bound approach. To tackle some technical issues in the lower bound approach under the quantity-dependent setup cost assumption, we establish a comparison theorem that enables one to prove the global optimality of a policy by examining a tractable subset of admissible policies. Since the smooth pasting technique does not apply to our Brownian inventory model, we also propose a selection procedure for computing the optimal policy parameters when the setup cost is a step function.
\end{abstract}

\section{Introduction}
\label{sec:introduction}

Classical inventory models usually assume a setup cost when an order is placed or a production run is started to replenish the inventory. It is well known that an ordering policy of the $(s,S)$ type is optimal for the backlogging inventory problem when the setup cost is constant for any order or production quantity; see \citet{Scarf60}, \citet{Iglehart63}, and \citet{Veinott66}. Arising from various activities, order setup costs are more complex in practical inventory systems and often depend on order quantities. In this paper, we take quantity-dependent setup costs into consideration and investigate the optimal ordering policy that minimizes the long-run average cost.

Setup costs may grow as order quantities increase. For example, if an order is shipped to the buyer by multiple vehicles, a shipping fee may be charged for each of them. If a vehicle's capacity is $Q$ and the shipping fee is $F$, the total shipping cost is a nondecreasing step function of order quantity $\xi$, given by
\begin{equation}
K(\xi) = F \cdot \Big\lceil \frac{\xi}{Q} \Big\rceil.
\label{eq:nondecreasing-step}
\end{equation}
The study of stochastic inventory models with such a setup cost can be traced back to \citet{Lippman69}, where the ordering cost is assumed to be a nondecreasing, subadditive function of the order quantity. Lippman considered a periodic-review model and proved the existence of optimal ordering policies for both the finite-horizon problem and the discounted, infinite-horizon problem. It is pointed out that at the beginning of each period, it is optimal to replenish the inventory when it drops below a certain level and not to order when it is above another level. The optimal ordering decisions, however, are not specified for inventory falling in other regions. With the setup cost in \eqref{eq:nondecreasing-step}, \citet{Iwaniec79} identified a set of conditions under which a full-batch-ordering policy is optimal. \citet{AlpETAL14} allowed orders with partial batches in their policies and partially characterized the optimal ordering policy that minimizes the long-run average cost. \citet{ChaoZipkin08} considered a simple quantity-dependent setup cost
\begin{equation}
K(\xi) = F \cdot 1_{(R,\infty)}(\xi),
\label{eq:two-piece}
\end{equation}
where $1_{A}$ denotes the indicator function of $A \subset \mathbb{R}$. This formulation describes the administrative cost under a supply contract with a capacity constraint: No extra cost is incurred if the order quantity does not exceed the contract volume $R$; otherwise, the buyer is required to pay an administrative fee $F$. The authors partially characterized the optimal ordering policy for the periodic-review model and developed a heuristic policy. \citet{Caliskan-DemiragETAL12} investigated several simple forms of nondecreasing, piecewise constant setup costs, including both \eqref{eq:nondecreasing-step} and \eqref{eq:two-piece}. They also provided partial characterization for the optimal ordering policies.

As opposed to the increasing fee structure, setup costs may also decrease for large orders. To achieve economies of scale in production and distribution, suppliers in e-commerce often provide shipping discounts or free shipping for large orders. Such promotions are useful in generating additional sales. As pointed out by \citet{LewisETAL06}, the shipping policies that provide incentives for large orders may bring more profits to suppliers than standard increasing shipping fees and free shipping promotions. \citet{ZhouETAL09} analyzed a periodic-review inventory model with a free shipping option from a buyer's perspective. The setup cost in their paper is
\begin{equation}
K(\xi) = F \cdot 1_{(0,R)}(\xi),
\label{eq:free-shipping}
\end{equation}
i.e., the supplier imposes a shipping fee $F$ when the order quantity is less than $R$, but waives this charge if the order quantity exceeds $R$. They found the optimal ordering policy for the single-period problem and proposed a heuristic policy for the multiple-period model.

In practical inventory systems, order setup costs may arise from multiple activities in administration and transportation. The costs incurred by some activities may increase with the order quantity while others may decrease. As a result, the total setup cost of an order may not be monotone with respect to the order quantity. The setup cost function in this paper takes a very general form, where $ K : \mathbb{R}_{+} \rightarrow \mathbb{R} $ is assumed to satisfy the following conditions:

\begin{enumerate}[noitemsep,label=(S\arabic*),ref=S\arabic*]
\item \label{itm:S1} $ K $ is nonnegative with $K(0) = 0$;
\item \label{itm:S2} $ K $ is bounded;
\item \label{itm:S3} $ K $ has a right limit at zero, and if $ K(0+) = 0 $, $ K $ has a finite right derivative at zero;
\item \label{itm:S4} $ K $ is lower semicontinuous, i.e., 
\[  
K(\tilde{\xi}) \leq \liminf_{\xi \rightarrow \tilde{\xi}} K(\xi) \quad \mbox{for $ \tilde{\xi} > 0$.} 
\]
\end{enumerate}
Both the setup cost in \eqref{eq:two-piece} and that in \eqref{eq:free-shipping} satisfy (\ref{itm:S1})--(\ref{itm:S4}). As a technical requirement, condition (\ref{itm:S4}) ensures that the optimal average cost is attainable. The practical interpretation of this condition is as follows: If the setup cost function has a jump at $ \tilde{\xi} $, condition (\ref{itm:S4}) implies that the buyer is allowed to pay the lower fee of $ K(\tilde{\xi}-) $ and $ K(\tilde{\xi}+) $, which essentially takes account of the possibility that the buyer may adjust the order by a small quantity so as to pay a less setup fee.  Conditions (\ref{itm:S1})--(\ref{itm:S4}) are similar to the assumptions in \citet{PereraETAL15}, where the optimality of $ (s,S) $ policies is proved for economic order quantity (EOQ) models under a general cost structure.

Besides the setup cost, each order incurs a \emph{proportional cost} with rate $ k \geq 0 $. To place an order of quantity $ \xi > 0 $, the manager is required to pay an ordering cost of 
\begin{equation}
C(\xi) = K(\xi) + k\xi.
\label{eq:ordering-cost}
\end{equation}
We do \emph{not} allow multiple simultaneous orders, i.e., the ordering cost must follow \eqref{eq:ordering-cost} as long as the total order quantity at an ordering time is equal to $ \xi $. In \eqref{eq:ordering-cost}, it would be more appropriate to interpret $ K(\xi) $ as the \emph{non-proportional} part of the ordering cost, instead of the \emph{fixed} cost in the usual sense. Accordingly, $ k \xi $ represents the \emph{proportional} part, and $ k $ should be understood as the increasing rate rather than the unit price of inventory. By decomposing the ordering cost into proportional and non-proportional parts, this formulation allows for unbounded setup cost functions. For example, although the setup cost in \eqref{eq:nondecreasing-step} does not satisfy (\ref{itm:S2}), we may decompose it into
\[  
K(\xi) = \frac{F\xi}{Q} + F \Big( \Big\lceil \frac{\xi}{Q} \Big\rceil - \frac{\xi}{Q} \Big),
\]
where the first and second terms are proportional and non-proportional terms, respectively. Since the non-proportional term satisfies (\ref{itm:S1})--(\ref{itm:S4}), we may take it as the non-proportional part of the ordering cost and $ (k+F/Q)\xi $ as the proportional part. Thanks to the general form of the non-proportional cost, the ordering cost function given by \eqref{eq:ordering-cost} includes most ordering cost structures in the literature, such as ordering costs with incremental or all-unit quantity discounts; see \citet{Porteus71,Porteus72} and \citet{AltintasETAL08}. For the sake of convenience, we still refer to the non-proportional part of the ordering cost as the setup cost.

Stochastic inventory models with a general setup cost function are analytically challenging. Since the ordering cost function may be neither convex nor concave, it is difficult to identify the cost structures that can be preserved through dynamic programming. As we mentioned, the optimal ordering policy for the periodic-review model has not been fully characterized, even if the setup cost function takes the simplest form as in \eqref{eq:two-piece} or \eqref{eq:free-shipping}. The partial characterization also suggests that the optimal periodic-review policy would be complicated.

In this paper, we assume that the inventory is constantly monitored and an order can be placed at any time. To the best of our knowledge, this is the first attempt to explore optimal ordering policies for continuous-review inventory systems with quantity-dependent setup costs. In the literature on periodic-review inventory models, it is a common practice to approximate customer demand within each period by a normally distributed random variable; see, e.g., Chapter~1 in \citet{Porteus02} and Chapter~6 in \citet{Zipkin00}. Brownian motion is thus a reasonable model for demand processes in continuous-time inventory systems; see, e.g., \citet{Bather66} and \citet{Gallego90}. With a Brownian demand process, the optimal ordering policy can be obtained by solving a Brownian control problem, which turns out to be more tractable than solving a dynamic program when the setup cost is quantity-dependent. This is because with a continuous demand process, the manager is able to place an order at any inventory level as he wishes. Since future demands are independent of the history, finding the optimal ordering policy is reduced to finding constant reorder and order-up-to levels that jointly minimize the long-run average cost. In periodic-review models, by contrast, the manager is allowed to place an order only at the beginning of a period. As the inventory level varies from period to period, the optimal order decision at each period depends on both the current inventory level and the prediction of the future inventory level. The resulting dynamic program is difficult to solve when the setup cost function takes a general form; see \citet{ChaoZipkin08} and \citet{Caliskan-DemiragETAL12} for more discussion.

In our model, inventory continuously incurs a holding and shortage cost that is a convex function of the inventory level. With the aforementioned assumptions, we prove that an $(s,S)$ policy can minimize the long-run average cost, and that $ (s^{\star}, S^{\star}) $, the optimal reorder and order-up-to levels, can be obtained by solving a nonlinear optimization problem. When the setup cost function satisfies $ K(0+) = 0 $, we prove that $s^{\star} = S^{\star} < 0$ holds under certain conditions, in which case the optimal ordering policy becomes a base stock policy that maintains inventory above a fixed shortage level.

Brownian inventory models were first introduced by \citet{Bather66}. In his pioneering work, Bather studied the impulse control of Brownian motion that allows upward adjustments. Assuming a constant setup cost and a convex holding and shortage cost, he obtained the $(s,S)$ policy that minimizes the long-run average cost. Bather's results have been extended to more general settings by a number of studies under the constant setup cost assumption. Among them, the $(s,S)$ policy that minimizes the discounted cost was obtained by \citet{Sulem86} with a piecewise linear holding and shortage cost, and by \citet{Benkherouf07} with a convex holding and shortage cost. \citet{Bar-IlanSulem95} obtained the optimal $(s,S)$ policy for a Brownian inventory model that allows for constant lead times, and \citet{MuthuramanETAL15} extended their results to a Brownian model with stochastic lead times. \citet{BensoussanETAL05} and \citet{BenkheroufBensoussan09} studied a stochastic inventory model where the demand is a mixture of a Brownian motion and a compound Poisson process; the optimal policy for this model is of the $(s,S)$ type again. Using the fluctuation theory of L\'{e}vy processes, \citet{Yamazaki2013} generalized their results to spectrally positive L\'{e}vy demand processes. In the above papers, the optimal ordering policies are obtained by solving a set of quasi-variational inequalities (QVIs) deduced from the Bellman equation. For computing the optimal parameters, one needs to impose additional smoothness conditions at the reorder and order-up-to levels. This technique, widely known as \emph{smooth pasting}, is essential to solve a Brownian control problem by the QVI approach. See \citet{Dixit93} for a comprehensive account of smooth pasting and its applications. 

\citet{HarrisonETAL83} studied the impulse control of Brownian motion allowing both upward and downward adjustments, for which a control band policy is proved optimal under the discounted cost criterion. In that paper, the authors adopted a two-step procedure which has become a widely used approach to solving Brownian control problems: In the first step, one establishes a lower bound for the cost incurred by an arbitrary admissible policy; such a result is often referred to as a \emph{verification theorem}. In the second step, one searches for an admissible policy to achieve this lower bound; the obtained policy, if any, must be optimal. The technique of smooth pasting is also a standard component of the lower bound approach. By imposing additional smoothness conditions at the boundary of a control policy, one may obtain the optimal control parameters through solving a \emph{free boundary problem}. Following this approach, \citet{OrmeciETAL08} obtained the optimal control band policy under the long-run average cost criterion. Both \citet{HarrisonETAL83} and \citet{OrmeciETAL08} assumed a constant setup cost and a piecewise linear holding and shortage cost. Their results were extended by \citet{DaiYao13a,DaiYao13b}, who allowed for a convex holding and shortage cost and obtained the optimal control band policies under both the average and discounted cost criteria. Using the lower bound approach, \citet{HarrisonTaksar83} and \citet{Taksar85} studied the two-sided instantaneous control of Brownian motion, where double barrier policies are proved optimal under different cost criteria. \citet{BaurdouxYamazaki15} extended the optimality of double barrier policies to spectrally positive L\'{e}vy demand processes. The lower bound approach was also adopted by \citet{WuChao14}, who studied optimal production policies for a Brownian inventory system with finite production capacity, and by \citet{YaoETAL15}, who studied optimal ordering policies with a concave ordering cost. The optimal policies in these two papers are of the $(s,S)$ type.

We follow the lower bound approach in this paper, while new issues arise from our Brownian model. We establish a verification theorem in Proposition~\ref{prop:lower-bound}. It states that if there exists a continuously differentiable function $f$ and a positive number $\underaccent{\bar}{\nu}$ such that they jointly satisfy some differential inequalities, the long-run average cost under any admissible policy must be at least~$\underaccent{\bar}{\nu}$. We derive this lower bound using It\^{o}'s formula, as in the previous studies by \citet{HarrisonETAL83}, \citet{OrmeciETAL08}, and \citet{DaiYao13a,DaiYao13b}. The Brownian model in those papers allows both upward and downward adjustments, so a control band policy is expected to be optimal. Under such a policy, the inventory level is confined within a finite interval and the associated relative value function is Lipschitz continuous. This fact allows them to assume $f$ to be Lipschitz continuous in the verification theorems. With this assumption, one can prove the lower bound by relying solely on It\^{o}'s formula. In our model, however, only upward adjustments are allowed. The optimal policy is expected to be an $(s,S)$ policy whose relative value function is not Lipschitz continuous. Without the Lipschitz assumption, it is difficult to prove the verification theorem in a direct manner. This problem was also encountered by \citet{WuChao14} and \citet{YaoETAL15}. In their papers, the lower bound results are established for a subset, rather than all of admissible policies; accordingly, the proposed $(s,S)$ policies are proved optimal within the same subset of policies.

We prove a comparison theorem to tackle this issue. Theorem~\ref{thm:dense} in this paper states that for any admissible policy, we can always find an admissible policy that has a finite order-up-to bound and whose long-run average cost is either less than or arbitrarily close to the average cost incurred by the given policy. In other words, if an ordering policy can be proved optimal within the set of policies having order-up-to bounds, it must be optimal among all admissible policies. This result allows us to prove the verification theorem by examining an arbitrary admissible policy that is subject to a finite order-up-to bound. With an order-up-to bound, we no longer require $f$ to be Lipschitz continuous for establishing the verification theorem by It\^{o}'s formula.

For an $(s,S)$ policy, the associated relative value function and the resulting long-run average cost jointly satisfy a second-order ordinary differential equation along with some boundary conditions; see Proposition~\ref{prop:sS} for the solution to this equation. We use this relationship to compute the optimal reorder and order-up-to levels. In the literature, the optimal $(s,S)$ policies for Brownian models with a constant setup cost are obtained by imposing smooth pasting conditions on the ordinary differential equations; see \citet{Bather66}, \citet{Sulem86}, \citet{Bar-IlanSulem95}, and \citet{WuChao14}. Unfortunately, our Brownian model does not preserve this property because the general setup cost function has imposed a quantity constraint on each setup cost value. With these constraints, the smooth pasting conditions may no longer hold at the optimal reorder and order-up-to levels. Without definite boundary conditions, we can neither define a free boundary problem nor solve the QVI problem for the optimal $ (s,S) $ policy. To obtain the optimal ordering policy, we need to minimize the long-run average cost by solving a nonlinear optimization problem. When the setup cost is a step function, we develop a policy selection algorithm for computing the optimal policy parameters.

The contribution of this paper is twofold. First, by assuming a Brownian demand process, we obtain the optimal ordering policy for inventory systems with quantity-dependent setup costs, filling a long-standing research gap. The optimality of $(s,S)$ policies is extended to a significantly more general cost structure. Although the optimal policy is obtained using a continuous-review model, it will shed light on periodic-review models, presumably serving as a simple and near-optimal solution. Second, the comparison theorem and the policy selection algorithm complement the well-established lower bound approach to solving Brownian control problems. Theorem~\ref{thm:dense} in this paper enables one to prove the optimality of a policy by examining a tractable subset, instead of all admissible policies. The constructive proof of this theorem can be extended to similar comparison results with minor modification. Using modified comparison theorems, we expect that both the production policy proposed by \citet{WuChao14} and the ordering policy proposed by \citet{YaoETAL15} will be proved globally optimal. Besides inventory control, our approach may also be used for solving Brownian control problems arising from financial management (see, e.g., \citealt{Constantinides76} and \citealt{Paulsen08}), production systems (\citealt{Wein92}, \citealt{VeatchWein96}, and \citealt{AtaETAL05}), and queueing control (\citealt{Ata06} and \citealt{RubinoAta09}).

The rest of this paper is organized as follows. We introduce the Brownian inventory model in \S\ref{sec:model} and present the main results in \S\ref{sec:main}. A lower bound is derived in \S\ref{sec:lower-bound} for the long-run average cost under an arbitrary admissible policy. The relative value function and the average cost under an $(s,S)$ policy are analyzed in \S\ref{sec:sS}. Using these results, we prove the optimality of the proposed policy in \S\ref{sec:proof-1}. Section~\ref{sec:proof-3} is dedicated to the proof of Theorem~\ref{thm:dense}, which enables us to investigate an optimal policy within a subset of admissible policies. We introduce a policy selection algorithm in \S\ref{sec:algorithm}, for obtaining the optimal ordering policy when the setup cost is a step function. The paper is concluded in \S\ref{sec:conclusion}, and we leave the proofs of technical lemmas to the appendix.

Let us close this section with frequently used notation. Let $\varphi$ be a real-valued function defined on $\mathbb{R}$. We use $ \Delta \varphi(t) $ to denote its increment at $ t $, i.e., $\Delta \varphi(t)=\varphi(t+) - \varphi(t-)$, if the one-sided limits exist. We use $\varphi'(t)$ and $\varphi''(t)$ to denote its first and second derivatives at $t$, respectively.

\section{Brownian inventory model}
\label{sec:model}

Consider a continuous-time inventory system whose inventory level at time $t\geq 0$ is denoted by $Z(t)$. We allow $Z(t)$ to be less than zero, in which case $|Z(t)|$ is interpreted as the back order or shortage level. We assume that all unsatisfied demands will be back-ordered and that the lead time of each order is zero. Let $D(t)$ and $Y(t)$ be the cumulative demand quantity and the cumulative order quantity during $[0,t]$, respectively. The inventory level at time $t\geq 0$ is given by
\[
Z(t) = x - D(t) + Y(t),
\]
where $x$ is a real number. We refer to $ Z = \{ Z(t):t \geq 0 \} $ as the \emph{inventory process}, and put $Z(0-)=x$ which is interpreted as the initial inventory level. We assume that the cumulative demand process $D=\{D(t):t\geq 0\}$ is a Brownian motion that starts from $D(0)=0$ and has drift $\mu>0$ and variance $\sigma^{2}>0$. In other words, $D$ has the representation
\[
D(t) = \mu t-\sigma B(t),
\]
where $B=\{B(t):t\geq 0\}$ is a standard Brownian motion defined on a filtered probability space $(\Omega, \mathcal{F},\mathbb{F},\mathbb{P})$ with filtration $\mathbb{F}=\{\mathcal{F}(t):t\geq 0\}$. Then, the inventory level at time $t$ can be written as
\begin{equation}
Z(t) = X(t)+ Y(t),
\label{eq:Z}
\end{equation}
where
\begin{equation}
X(t) = x-\mu t +\sigma B(t).
\label{eq:X}
\end{equation}
We refer to $ X = \{ X(t) : t \geq 0 \} $ as the \emph{uncontrolled} inventory process. The system manager replenishes the inventory according to a non-anticipating ordering policy, which is specified by the cumulative order process $Y=\{Y(t):t\geq 0\}$. More specifically, an ordering policy is said to be \emph{admissible} if $Y$ satisfies the following three conditions. First, for each sample path $\omega\in\Omega$, $Y(\omega, \cdot)$ is a nondecreasing function that is right-continuous on $[0,\infty)$ and has left limits on $(0,\infty)$. Second, $Y(t)\geq 0$ for all $t\geq 0$. Third, $Y$ is adapted to $\mathbb{F}$, i.e., $Y(t)$ is $\mathcal{F}(t)$-measurable for all $t\geq 0$. We use $\mathcal{U}$ to denote the set of all admissible ordering policies, or equivalently, the set of all cumulative order processes that satisfy the above three conditions. With the convention $Y(0-)=0$, an admissible policy $Y$ is said to \emph{increase} at time $t\geq 0$ if $Y(u)-Y(t-)>0$ for all $u>t$. We call $t$ an \emph{ordering time} if $Y$ increases at $t$. Let $\mathcal{I}(t)$ be the cardinality of the set $\{u\in[0,t]: Y\mbox{ increases at } u\}$, which is interpreted as the number of orders placed by time $t$. Moreover, $t$ is said to be a \emph{jump time} if $\Delta Y(t)>0$. Let
\begin{equation}
Y^{c}(t) = Y(t)-\sum_{0\leq u \leq t}\Delta Y(u).
\label{eq:continuous-part}
\end{equation}
Then, $Y^{c}=\{Y^{c}(t):t\geq 0\}$ is the \emph{continuous part} of $Y$.

Each order incurs an ordering cost given by \eqref{eq:ordering-cost}, with $ k \geq 0 $ and $ K $ satisfying (\ref{itm:S1})--(\ref{itm:S4}). If $ K(0+) > 0 $, we only need to consider the policies that place finitely many orders over a finite time interval, i.e., $ \mathcal{I}(t) < \infty $ almost surely for $ t > 0 $, because otherwise, either the cumulative ordering cost or the cumulative holding and shortage cost will be infinite by time $ t $. In other words, when $ K(0+) > 0 $, we consider $ Y $ that is a piecewise constant function on almost all sample paths, which implies that $ Y^{c}(t) = 0 $ for all $ t > 0 $ almost surely. Therefore, the cumulative ordering cost during $ [0,t] $ is given by
\begin{equation}
\mathcal{C}_{Y}(t) = \sum_{0 \leq u \leq t} K(\Delta Y(u)) + k Y(t).
\label{eq:ordering-K-positive}
\end{equation}
When $ K(0+) = 0 $, the manager may also exert inventory control through the continuous part of $ Y $, which may no longer be a zero process. To analyze the setup cost incurred by $ Y^{c} $, let us put
\begin{equation}
\ell = \liminf_{\xi \downarrow 0} \frac{K(\xi)}{\xi}.
\label{eq:ell}
\end{equation}
By (\ref{itm:S3}), $ \ell $ is the right derivative of $ K $ at zero if $ K(0+) = 0$. We would thus interpret $ \ell $ as the unit setup cost when the order quantity is infinitesimal. Besides a proportional cost of $ k $, every unit increment of $ Y^{c} $ incurs a setup cost of $ \ell $. Hence, the cumulative ordering cost during $ [0,t] $ is
\begin{equation}
\mathcal{C}_{Y}(t) = \sum_{0 \leq u \leq t} K(\Delta Y(u)) + k Y(t) + \ell Y^{c} (t).
\label{eq:ordering-K}
\end{equation}
Note that $ \ell = \infty $ if $ K(0+) > 0 $. Following the convention that $ 0 \cdot \infty = 0 $, we may take \eqref{eq:ordering-K-positive} as a special case of the cumulative ordering cost given by \eqref{eq:ordering-K}.

In addition to the ordering cost, the system incurs a holding and shortage cost that is charged at rate $h(z)$ when the inventory level is $z$. More specifically, $h(z)$ is the inventory holding cost per unit of time for $z\geq 0$, and is the shortage cost of back orders per unit of time for $z<0$. The cumulative holding and shortage cost by time $t$ is thus given by 
\begin{equation}
\mathcal{H}_{Y}(t) = \int_{0}^{t} h(Z(u))\,\mathrm{d}u,
\label{eq:holding}
\end{equation}
which depends on the ordering policy through the inventory process. We assume that $h$ satisfies the following conditions:
\begin{enumerate}[noitemsep,label=(H\arabic*),ref=H\arabic*]
\item \label{itm:H1} $h(0)=0$;
\item \label{itm:H2} $h$ is a convex function;
\item \label{itm:H3} $h$ is continuously differentiable except at $z=0$;
\item \label{itm:H4} $h'(z)>0$ for $z>0$ and $h'(z)<0$ for $z<0$;
\item \label{itm:H5} $h$ is polynomially bounded, i.e., there exists a positive integer $a$ and two positive numbers $b_{0}$ and $b_{1}$ such that $h(z) \leq b_{0}+b_{1}|z|^{a}$ for all $z\in\mathbb{R}$.
\end{enumerate}
In particular, with $\beta_{1},\beta_{2}, \beta>0$, both the piecewise linear cost 
\[
h(z) =
\begin{cases}
\beta_{1}z & \mbox{for }z\geq 0\\
-\beta_{2}z & \mbox{for }z<0
\end{cases}
\]
and the quadratic cost $h(z)=\beta z^{2}$ satisfy (\ref{itm:H1})--(\ref{itm:H5}).

Given the initial inventory level $x$ and the ordering policy $ Y \in \mathcal{U} $, the long-run average cost is defined as
\[  
\operatorname{AC}(x,Y) = \limsup_{t\rightarrow \infty}\frac{1}{t}\mathbb{E}_{x}
[ \mathcal{C}_{Y}(t) + \mathcal{H}_{Y}(t)],
\]
where $\mathbb{E}_{x}[\cdot]$ is the expectation conditioning on the initial inventory level $Z(0-)=x$. By \eqref{eq:ordering-K}--\eqref{eq:holding}, the long-run average cost is given by
\begin{equation}
\operatorname{AC}(x,Y) = \limsup_{t\rightarrow \infty}\frac{1}{t}\mathbb{E}_{x}
\Big[\int_{0}^{t}h(Z(u))\,\mathrm{d}u+\sum_{0\leq u\leq t}K(\Delta Y(u)
) + kY(t) + \ell Y^{c}(t)\Big].
\label{eq:AC}
\end{equation}

When $K(0+)>0$, we only need to consider ordering policies having piecewise constant sample paths. Such a policy can be specified by a sequence of pairs $\{(\tau_{j},\xi_{j}):j=0,1,\ldots\}$ where $\tau_{j}$ is the $j$th ordering time and $\xi_{j}$ is the quantity of the $j$th order. By convention, we set $\tau_{0}=0$ and let $\xi_{0}$ be the quantity of the order placed at time zero ($\xi_{0}=0$ if no order is placed). With this sequence, the ordering policy $Y$ can be specified by $ Y(t) = \sum_{j=0}^{\mathcal{J}(t)} \xi_{j} $, where $\mathcal{J}(t) = \max\{j\geq 0: \tau_{j}\leq t \}$. On the other hand, if the ordering policy $Y$ is given, we can obtain each ordering time by
\[
\tau_{j} = \inf\{t>\tau_{j-1}:Y(t)>Y(t-)\}\quad\mbox{for } j = 1,2,\ldots
\]
and each order quantity by
\[
\xi_{j} = Y(\tau_{j})-Y(\tau_{j}-)\quad\mbox{for } j=0,1,\ldots.
\]
Therefore, finding an optimal ordering policy when $ K(0+) > 0 $ is equivalent to specifying a sequence of optimal ordering times and order quantities $\{(\tau_j,\xi_j): j=0,1,\ldots\}$, which turns out to be an \emph{impulse control} problem for the Brownian model. For the ordering policy $Y$ to be adapted to $\mathbb{F}$, we require each $\tau_{j}$ to be an $\mathbb{F}$-stopping time and $\xi_{j}$ to be $\mathcal{F} (\tau_{j})$-measurable.

When the setup cost has $ K(0+) = 0 $, the manager may adjust the inventory level using the continuous part of $ Y $ without incurring infinite costs. If $ Y^{c} $ is not a zero process, we will have $ \mathcal{I}(t) = \infty $ for some $ t > 0 $ with a positive probability. It may happen that the optimal ordering policy has continuous sample paths except for a possible jump at time zero. In this case, the ordering problem becomes an \emph{instantaneous control} problem for the Brownian model.

\section{Main results}
\label{sec:main}

The main results of this paper are presented in this section. Theorem~\ref{thm:main-results} states that with a setup cost that satisfies (\ref{itm:S1})--(\ref{itm:S4}) and a holding and shortage cost that satisfies (\ref{itm:H1})--(\ref{itm:H5}), the optimal ordering policy for the Brownian inventory model is an $(s,S)$ policy with $s\leq S$. In addition, the optimal reorder and order-up-to levels $ (s^{\star}, S^{\star}) $ satisfy $ s^{\star} < S^{\star} $ if $ K(0+) > 0 $, and satisfy $ s^{\star} \leq S^{\star} $ if $ K(0+) = 0$. As a special case, the optimal ordering policy becomes a base stock policy when $ s^{\star} = S^{\star} $. We also provide a comparison result in Theorem~\ref{thm:dense}, which is a technical tool for proving the first theorem by the lower bound approach.

Under an $(s,S)$ policy, as long as the inventory level drops below $s$, the manager places an order that replenishes the inventory to level $S$ immediately. We use $U(s,S)$ to denote this policy. Clearly, $U(s,S)\in \mathcal{U}$ for $s\leq S$. An $(s,S)$ policy with $s<S$ can be specified by the sequence of pairs $\{(\tau_{j},\xi_{j}): j=0,1,\ldots\}$ as follows. With $ \tau_{0}=0 $ and 
\[  
\xi_{0}=
\begin{cases}
S-x & \mbox{if $ x\leq s $,}\\
0 & \mbox{if $ x>s $,}
\end{cases}
\]
the $j$th order is placed at time $ \tau_{j} = \inf\{t>\tau_{j-1}:Z(t-)\leq s\} $ with a constant quantity $ \xi_{j} = S-s $.

If the reorder and order-up-to levels are equal, the $ (s,S) $ policy becomes a \emph{base stock policy}. Under the base stock policy, if the initial inventory level $x$ is below the base stock level $s$, the manager places an order of quantity $s-x$ at time zero that replenishes the inventory to $Z(0)=s$; otherwise, the manager does not order at time zero. After that, whenever the inventory level drops below $s$, the manager brings it back to $s$ immediately. Such a policy is well defined for our Brownian model, and the inventory process under that has an analytic expression---see the lemma below.
\begin{lemma}
\label{lem:Skorokhod-Map}
Let $s$ be a real number and $\mathbb{C}[0,\infty)$ the set of real-valued continuous functions on $[0,\infty)$. Then for each $\phi\in\mathbb{C}[0,\infty)$, there exists a unique pair of functions $(\eta,\zeta)\in\mathbb{C}[0,\infty)\times\mathbb{C}[0,\infty)$ such that \textnormal{(i)} $\eta$ is nondecreasing with $\eta(0) = (s - \phi(0))^{+};$ \textnormal{(ii)} $\zeta(t) = \phi(t)+\eta(t)\geq s$ for $t\geq 0;$ \textnormal{(iii)} $\eta$ increases only when $\zeta(t)=s$, i.e.,
\[
\int_{0}^{\infty}(\zeta(t)-s)\,\mathrm{d}\eta(t)=0.
\]
Specifically,
\[
\eta(t) = \sup_{0\leq u\leq t} (s-\phi(u))^{+}\quad\mbox{for }t\geq 0.
\]
\end{lemma}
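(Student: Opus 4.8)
The plan is to verify that the explicit formula $\eta(t)=\sup_{0\le u\le t}(s-\phi(u))^{+}$ produces a pair $(\eta,\zeta)$ satisfying (i)--(iii), and then to establish uniqueness by an integration-by-parts argument. This is the classical one-sided Skorokhod reflection problem, so the work is essentially a careful verification of each property rather than anything conceptually new.

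For existence, set $\eta(t)=\sup_{0\le u\le t}(s-\phi(u))^{+}$ and $\zeta(t)=\phi(t)+\eta(t)$. Then $\eta$ is nondecreasing with $\eta(0)=(s-\phi(0))^{+}$, and it is continuous because the running supremum of a continuous function is continuous (the supremum over each compact $[0,t]$ is attained). Property (ii) is immediate, since $\eta(t)\ge(s-\phi(t))^{+}\ge s-\phi(t)$ gives $\zeta(t)\ge s$. For (iii) I would show that the Lebesgue--Stieltjes measure $\mathrm{d}\eta$ charges only the set $\{t:\zeta(t)=s\}$; equivalently, that $\eta$ is locally constant at every $t$ with $\zeta(t)>s$. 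Indeed, if $\zeta(t)>s$ then $\eta(t)>(s-\phi(t))^{+}$, so by continuity of $\phi$ there is $\delta>0$ with $(s-\phi(u))^{+}<\eta(t)$ for all $u\in[t-\delta,t+\delta]$; the supremum of $(s-\phi(\cdot))^{+}$ over this compact interval is then strictly below $\eta(t)$, whence $\eta(t-\delta)=\eta(t+\delta)=\eta(t)$. Since $\{\zeta>s\}$ is open and $\eta$ is constant on each of its (countably many) connected components, $\mathrm{d}\eta(\{\zeta>s\})=0$, so $\int_{0}^{\infty}(\zeta(t)-s)\,\mathrm{d}\eta(t)=0$.

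For uniqueness, suppose $(\eta_{1},\zeta_{1})$ and $(\eta_{2},\zeta_{2})$ both satisfy (i)--(iii), and put $\psi=\eta_{1}-\eta_{2}=\zeta_{1}-\zeta_{2}$. Each $\eta_{i}$ is continuous and nondecreasing, so $\psi$ is continuous and of bounded variation with $\psi(0)=0$ (as $\eta_{1}(0)=\eta_{2}(0)=(s-\phi(0))^{+}$). Applying the chain rule $\mathrm{d}(\psi^{2})=2\psi\,\mathrm{d}\psi$ for continuous functions of bounded variation,
\[
\tfrac12\psi(t)^{2}=\int_{0}^{t}\psi(u)\,\mathrm{d}\eta_{1}(u)-\int_{0}^{t}\psi(u)\,\mathrm{d}\eta_{2}(u).
\]
Writing $\psi=(\zeta_{1}-s)+(s-\zeta_{2})$ and using (iii) for $\eta_{1}$, the first integral equals $\int_{0}^{t}(s-\zeta_{2}(u))\,\mathrm{d}\eta_{1}(u)\le0$ since $\zeta_{2}\ge s$ and $\mathrm{d}\eta_{1}\ge0$; similarly, using (iii) for $\eta_{2}$, the second integral equals $\int_{0}^{t}(\zeta_{1}(u)-s)\,\mathrm{d}\eta_{2}(u)\ge0$. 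Hence $\tfrac12\psi(t)^{2}\le0$, so $\psi\equiv0$, which yields $\eta_{1}=\eta_{2}$ and $\zeta_{1}=\zeta_{2}$.

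The argument is routine; the only points requiring care are the measure-theoretic bookkeeping in the existence part (continuity of the running supremum and the fact that $\mathrm{d}\eta$ is supported on $\{\zeta=s\}$) and the justification of $\mathrm{d}(\psi^{2})=2\psi\,\mathrm{d}\psi$ for continuous bounded-variation functions in the uniqueness part. As the lemma is entirely standard, an alternative is to cite a reference on the one-dimensional Skorokhod map rather than reproduce the proof.
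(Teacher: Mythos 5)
Your proof is the standard one for the one-sided Skorokhod reflection map; the paper itself supplies no argument, stating only that the lemma is a modified version of Proposition~2.1 in Harrison (2013) with a similar proof, so your closing remark that one could simply cite a reference is exactly what the authors do. Both the explicit-formula verification and the $\psi^{2}$ integration-by-parts uniqueness argument are sound. One small patch is needed in the existence step: the claim that $\zeta(t)>s$ implies $\eta(t)>(s-\phi(t))^{+}$ fails when $\eta(t)=0$ and $\phi(t)>s$ (then both sides equal zero), and the subsequent choice of $\delta$ with $(s-\phi(u))^{+}<\eta(t)$ is impossible; in that case, however, $\phi>s$ on a neighborhood of $t$ by continuity and $\eta$ is identically zero up to $t+\delta$, so the desired local constancy of $\eta$ holds trivially, and your argument as written covers the remaining case $\eta(t)>0$.
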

This lemma is a modified version of Proposition~2.1 in \citet{Harrison13}. The proof is similar and thus omitted. Under the base stock policy, the inventory process in our Brownian model becomes a reflected Brownian motion with lower reflecting barrier at $s$. By Lemma~\ref{lem:Skorokhod-Map}, the cumulative order quantity during $[0,t]$ is
\[
Y(t) = \sup_{0\leq u\leq t} (s-X(u))^{+},
\]
where $X(u)$ is given by (\ref{eq:X}). Clearly, $Y$ is admissible for each $s\in\mathbb{R}$. Because $Y$ has continuous sample paths, for each $t>0$, there are infinite ordering times in $[0,t]$ with a positive probability.

Before stating the main theorem, let us introduce a proposition that characterizes the optimal policy of the $ (s,S) $ type. In particular, the long-run average cost under an $ (s,S) $ policy has an analytic expression, which is given by \eqref{eq:gamma} below and will be proved in \S\ref{sec:sS} (see Proposition~\ref{prop:sS}).

\begin{proposition}
\label{prop:z-s-S-star}	
Assume that the setup cost $ K $ satisfies \textnormal{(\ref{itm:S1})--(\ref{itm:S4})} and that the holding and shortage cost $ h $	satisfies \textnormal{(\ref{itm:H1})--(\ref{itm:H5})}. Let
\begin{equation}
\gamma(s,S)=
\begin{dcases}
k\mu + \frac{K(S-s)\mu}{S-s} + \frac{\lambda}{S-s} \int_{s}^{S} \int_{0}^{\infty}h(u+y)\mathrm{e}^{-\lambda u}\,\mathrm{d}u \,\mathrm{d}y & \mbox{if $s<S$},\\
(k + \ell)\mu + \lambda \int_{0}^{\infty}h(u+s)\mathrm{e}^{-\lambda u}\,\mathrm{d}u &\mbox{if $s=S$,}
\end{dcases}
\label{eq:gamma}
\end{equation}
where $ \lambda = 2\mu / \sigma^{2} $. Then, there exists $ (s^{\star}, S^{\star}) \in \mathbb{R}^{2} $ such that
\begin{equation}
\gamma(s^{\star}, S^{\star}) = \inf\{ \gamma(s,S) : s \leq S \}.
\label{eq:s*S*}
\end{equation}
If $ K(0+) > 0$, the minimizer $ (s^{\star}, S^{\star}) $ satisfies $ s^{\star} < z^{\star} < S^{\star} $, where $ z^{\star} $ is the unique solution to
\begin{equation}
\lambda \int_{0}^{\infty} h(u+z^{\star}) \mathrm{e}^{-\lambda u} \, \mathrm{d}u = h(z^{\star})
\label{eq:optimal-barrier}
\end{equation}
and satisfies $ z^{\star} < 0 $; if $ K(0+) = 0 $, the minimizer satisfies either $ s^{\star} < z^{\star} < S^{\star} $ or $ s^{\star} = z^{\star} = S^{\star} $.
\end{proposition}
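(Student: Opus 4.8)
The plan is to split the argument into three parts: (i) existence of a minimizer of $\gamma$ over the closed set $\{s\le S\}$; (ii) the characterization of the optimal shortage barrier $z^\star$ via \eqref{eq:optimal-barrier}; and (iii) the sandwich inequalities $s^\star< z^\star< S^\star$ (when $K(0+)>0$) or the dichotomy $s^\star<z^\star<S^\star$ versus $s^\star=z^\star=S^\star$ (when $K(0+)=0$).

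For part (i), I would first show that the inner double integral in \eqref{eq:gamma} is finite and smooth in $(s,S)$: by (\ref{itm:H5}) the function $G(y):=\lambda\int_0^\infty h(u+y)\mathrm{e}^{-\lambda u}\,\mathrm{d}u$ is finite, convex (as an average of the convex functions $y\mapsto h(u+y)$), continuously differentiable, and $G(y)\to\infty$ as $|y|\to\infty$ at a polynomial rate. Writing $\gamma(s,S)=k\mu+\dfrac{K(S-s)\mu}{S-s}+\dfrac{\lambda}{S-s}\int_s^S G(y)\,\mathrm{d}y$ for $s<S$, one sees that as $\|(s,S)\|\to\infty$ with $s\le S$, the averaged term $\frac{1}{S-s}\int_s^S G(y)\,\mathrm{d}y$ blows up (it is bounded below by $\min_{[s,S]}G$, and any escaping sequence forces either $s\to+\infty$, or $S\to-\infty$, or $S-s\to\infty$ with the interval centre escaping, or $S-s\to\infty$ with a bounded centre — in every case $\frac1{S-s}\int_s^S G\to\infty$ because $G$ is coercive and convex). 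Since $K$ is bounded by (\ref{itm:S2}), the setup term $K(S-s)\mu/(S-s)$ stays bounded away from the relevant region except near the diagonal, where it is controlled by $\ell$ and (\ref{itm:S3}); hence $\gamma$ is coercive on $\{s\le S\}$. Because $K$ is only lower semicontinuous (\ref{itm:S4}), $\gamma$ is lower semicontinuous but not continuous; still, a coercive lower semicontinuous function on a closed set attains its infimum, and one must also check lower semicontinuity along the diagonal using $\liminf_{\xi\downarrow 0}K(\xi)/\xi=\ell$ together with the definition of $\gamma(s,s)$ via the $(k+\ell)\mu$ term. This verifies \eqref{eq:s*S*}.

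For part (ii), I would define $g(z):=G(z)-h(z)=\lambda\int_0^\infty h(u+z)\mathrm{e}^{-\lambda u}\,\mathrm{d}u-h(z)$. Since $G$ and $h$ are convex and $G$ is a strictly "smoothed" version of $h$, I expect $g$ to be strictly decreasing: differentiating, $g'(z)=G'(z)-h'(z)=\lambda\int_0^\infty(h'(u+z)-h'(z))\mathrm{e}^{-\lambda u}\,\mathrm{d}u$ wherever $h'$ exists, and this is strictly positive for $z>0$ and strictly negative for $z<0$ by (\ref{itm:H4}) — wait, that shows $g'$ changes sign, so instead I would directly use monotonicity of the map $z\mapsto$ (ratio of the averaged value to $h$) or, more robustly, argue that $g(z)>0$ for $z$ very negative (there $h(z)\to\infty$ but the average $G(z)$ over $u\ge0$ picks up values $h(u+z)$ with $u+z$ closer to the minimiser of $h$, so $G(z)<h(z)$ — actually the reverse) and treat the sign of $g$ near $z=0$ and at $\pm\infty$ carefully: at $z=0$, $g(0)=\lambda\int_0^\infty h(u)\mathrm{e}^{-\lambda u}\,\mathrm{d}u>0$ by (\ref{itm:H4}); for $z\to+\infty$, $h(z)$ dominates $G(z)$ so $g(z)<0$ eventually. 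By the intermediate value theorem there is a root, and strict monotonicity of $g$ on a neighbourhood of the root (from strict convexity of $h$ and (\ref{itm:H3})) gives uniqueness; a sign check on the left half-line shows the root is negative, i.e. $z^\star<0$.

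For part (iii) — the main obstacle — I would compute the first-order conditions for the minimiser. For fixed $S-s=:q>0$, minimising over the location shows the optimal interval $[s,S]$ must be "balanced"; the partial derivative $\partial_s\gamma$ and $\partial_S\gamma$ (valid at any point where $K$ is differentiable, and treated via one-sided derivatives / the lower-semicontinuity reduction at jumps of $K$) yield the conditions that the average $\frac1q\int_s^S G(y)\,\mathrm{d}y$ equals $G$ evaluated at the endpoints adjusted by the setup term, which forces $G(s^\star)$ and $G(S^\star)$ to straddle the average, hence $G(s^\star)<\text{avg}<G(S^\star)$ since $G$ is convex and the interior of the interval is where $G$ is smaller; combined with $G(z^\star)=h(z^\star)$ this pins $s^\star<z^\star<S^\star$. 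Concretely I would show $\gamma(s,S)>h(z^\star)+k\mu$ cannot be beaten by shrinking the interval toward $z^\star$ only if $K(0+)>0$ (the setup penalty $K(q)\mu/q\to+\infty$ as $q\downarrow0$ because $K(0+)>0$ means $K(q)\ge c>0$ for small $q$, so a strictly positive gap $q=S^\star-s^\star$ is forced, and the balance conditions then put $z^\star$ strictly inside). When $K(0+)=0$, the setup penalty near the diagonal tends to $\ell$, not infinity, so collapsing the interval to the single point $z^\star$ becomes admissible and competitive; comparing $\gamma(z^\star,z^\star)=(k+\ell)\mu+G(z^\star)$ against $\inf_{s<S}\gamma(s,S)$ gives the stated dichotomy — either the infimum over $s<S$ is strictly smaller (and then the argument above yields $s^\star<z^\star<S^\star$) or it is not, in which case the base-stock point $s^\star=z^\star=S^\star$ is optimal. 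The delicate point throughout is handling the lower semicontinuity of $K$ in the first-order analysis: at a jump of $K$, the "derivative" arguments must be replaced by the observation (from the discussion after (\ref{itm:S4})) that only the lower of the one-sided values $K(\tilde\xi\pm)$ matters, so one may without loss assume $K$ is right-continuous and left-lower-semicontinuous at the optimizer, and the first-order conditions hold with the appropriate one-sided derivatives, which (\ref{itm:S3}) guarantees to be finite near $q=0$.
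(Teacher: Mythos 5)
Your overall strategy for existence (coercivity plus lower semicontinuity of $\gamma$ on the closed set $\{s\le S\}$) is workable and is essentially a two-dimensional version of what the paper does, but parts (ii) and (iii) contain genuine gaps. In part (ii) your sign analysis of $g(z)=G(z)-h(z)$ is backwards: since $h$ is nondecreasing on $[0,\infty)$, we have $h(u+z)\ge h(z)$ for $u\ge 0$ and $z\ge 0$, so $G(z)\ge h(z)$ there and $g$ has \emph{no} root on the positive axis; your claim that ``$h(z)$ dominates $G(z)$'' for large positive $z$ is false, and the intermediate-value argument you run between $0$ and $+\infty$ produces nothing. The root lies in $(-\infty,0)$, and locating it requires the nontrivial estimate $\limsup_{z\to-\infty}\big(G(z)-h(z)\big)<0$ (the paper proves this from (\ref{itm:H2}), (\ref{itm:H4}), (\ref{itm:H5}) via an integration-by-parts bound), which is entirely absent from your write-up. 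Uniqueness also cannot rest on ``strict convexity of $h$'' --- (\ref{itm:H2}) only assumes convexity, and the piecewise linear cost is a canonical example; the correct route is to show $g'>0$ on $(-\infty,0)$ directly from convexity and (\ref{itm:H4}). You also never record the identity that makes \eqref{eq:optimal-barrier} useful for part (iii): writing $g_{0}(z)=\mu^{-1}G(z)$, one has $g_{0}'(z)=\lambda\mu^{-1}\big(G(z)-h(z)\big)$, so $z^{\star}$ is precisely the unique minimizer of $G$. Without that identification, your final step ``combined with $G(z^{\star})=h(z^{\star})$ this pins $s^{\star}<z^{\star}<S^{\star}$'' has no content.

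In part (iii) the two-variable first-order conditions $\partial_{s}\gamma=\partial_{S}\gamma=0$ cannot be made rigorous as stated, because they differentiate $K(S-s)$ and $K$ is merely bounded and lower semicontinuous; the proposed repair via ``one-sided derivatives at jumps'' is not an argument. The clean route --- which you gesture at with ``for fixed $S-s=:q$, minimising over the location'' but then abandon --- is to decompose the problem as the paper does: for fixed order quantity $\xi=S-s>0$ the location sub-problem $\inf_{s}\gamma(s,s+\xi)$ does not involve $K$ at all, is smooth, and its unique stationary point is characterized by the balance condition $G(\tilde s(\xi))=G(\tilde s(\xi)+\xi)$ (not ``$G(s^{\star})<\mathrm{avg}<G(S^{\star})$'' as you write; at the optimum the endpoint values are \emph{equal} and exceed the average). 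Since $z^{\star}$ is the unique minimizer of $G$, the balanced interval must contain $z^{\star}$ in its interior, giving the sandwich for every $\xi>0$. One then minimizes the resulting one-dimensional function $\theta(\xi)$ over $\xi\ge 0$, where $\theta$ inherits lower semicontinuity from (\ref{itm:S4}) and coercivity from (\ref{itm:H5}), and the dichotomy follows from $\theta(0)=\infty$ iff $K(0+)>0$. Your treatment of the $K(0+)>0$ versus $K(0+)=0$ split is correct in spirit, but it only becomes a proof once the location/quantity decomposition replaces the unconstrained partial derivatives.
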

\begin{remark}
\label{rmk:sS}
The long-run average cost under the $ (s,S) $ policy is given by $ \gamma(s,S) $ in \eqref{eq:gamma}. When $ s < S $, the first expression in \eqref{eq:gamma} can be interpreted as follows. Since the quantity of each order is $ S-s $, the long-run average proportional and setup costs are $ k\mu $ and $ K(S-s)\mu/(S-s) $, respectively. The inventory process under the $ (s,S) $ policy is regenerative. Within each cycle, the trajectory of $ Z $ is identical to that of a Brownian motion starting from $ S $ with drift $ -\mu $ and variance $ \sigma^{2} $, so a cycle length has the same distribution as the first hitting time of $ s $ by $ X $ in \eqref{eq:X} with $ X(0) = S $. More specifically, assuming $ X(0) = x$, let us put
\begin{equation}
T(y)=\inf \{t\geq 0: X(t) = y \}
\quad \mbox{and} \quad
H_{x}(y) = \mathbb{E}_{x} \Big[\int_{0}^{T(y)}h(X(u)) \,\mathrm{d}u\Big].
\label{eq:tau}
\end{equation}
Then with $ X(0) = S $, the length of a cycle can be represented by $T(s)$ and the expected holding and shortage cost during a cycle is $H_{S}(s)$. The long-run average holding and shortage cost is thus equal to $H_{S}(s)/\mathbb{E}_{S}[T(s)]$ (see, e.g., Theorem~VI.3.1 in \citealt{Asmussen03}). By Theorem~5.32 in \citet{Serfozo09}, $\mathbb{E}_{S}[T(s)] = (S-s)/\mu $. The formula of $H_{x}(y)$ can be found in \S15.3 in \citet{KarlinTaylor81}, where
\[
H_{S}(s)
=\frac{\lambda}{\mu} \int_{s}^{S}\int_{0}^{\infty}h(u+y)
\mathrm{e}^{-\lambda u}\,\mathrm{d}u\,\mathrm{d}y.
\]
Hence, the third term on the right side is the long-run average holding and shortage cost. When $ K(0+) = 0 $ in (\ref{itm:S3}), by taking $ S-s \rightarrow 0 $, the long-run average cost of the $ (s,S) $ policy converges to
\[  
\gamma(s,s) = (k + \ell)\mu + \lambda \int_{0}^{\infty}h(u+s)\mathrm{e}^{-\lambda u}\,\mathrm{d}u.
\]
Since the $ (s,S) $ policy turns out to be a base stock policy when $ s = S $, the second expression in \eqref{eq:gamma} is the long-run average cost under the base stock policy with base stock level $ s $. 
\end{remark}

\begin{remark}
\label{rmk:base-stock}
	
The pair $ (s^{\star}, S^{\star}) $ that satisfies \eqref{eq:s*S*} specifies the reorder and order-up-to levels (which may not be unique) for the optimal $ (s,S) $ policy. When $ K(0+) = 0 $, the optimal $ (s,S) $ policy may be a base stock policy whose base stock level $ z^{\star} $ is specified by \eqref{eq:optimal-barrier}. Since $z^{\star}<0$, the inventory under the optimal base stock policy is maintained above a fixed shortage level. Regulated by the (slightly) negative base stock level, the inventory will fluctuate in a neighborhood of zero. By \eqref{eq:gamma} and \eqref{eq:optimal-barrier}, the minimum long-run average cost is equal to
\[
\gamma(z^{\star}, z^{\star}) = (k+\ell)\mu + h(z^{\star}).
\]
The optimal base stock level can be interpreted as follows. As discussed in \S\ref{sec:model}, the long-run average ordering cost must be $(k+\ell)\mu$ under any base stock policy. The optimal base stock policy should thus minimize the average holding and shortage cost. As a reflected Brownian motion with a negative drift, $Z$ will reach a steady state as time goes by. Let $Z(\infty)$ be the steady-state inventory level. If the base stock level is $s$, $Z(\infty)-s$ follows an exponential distribution with rate $\lambda=2\mu/\sigma^2$ (see, e.g., Proposition~6.6 in \citealt{Harrison13}). The resulting long-run average holding and shortage cost is given by
\[
H(s) = \int_{0}^{\infty} h(u+s)\cdot \lambda \mathrm{e}^{-\lambda u}\,
\mathrm{d}u.
\]
By setting the first derivative of $H$ equal to zero, the optimal base stock 	level can be obtained by solving \eqref{eq:optimal-barrier}, from which we also have $H(z^{\star})=h(z^{\star})$. Therefore, $ (k+\ell)\mu + h(z^{\star}) $ is the long-run average cost under the optimal base stock policy.
\end{remark}

\begin{remark}
Although the optimal base stock policy incurs less holding and shortage cost than any $ (s,S) $ policy with $ s < S $, there may exist some $ s < S $ such that the $ (s,S) $ policy with these parameters incurs less setup cost, i.e., $ K(S-s)/(S-s) < \ell $. When $ K(0+) = 0$, the optimal reorder and order-up-to levels may either satisfy $ s^{\star} < z^{\star} < S^{\star} $ or satisfy $ s^{\star} = z^{\star} = S^{\star} $.
\end{remark}

Let 
\[
\nu^{\star} = \inf\{ \operatorname{AC}(x,Y) : x \in \mathbb{R},\ Y \in \mathcal{U} \},
\]
where $ \operatorname{AC}(x,Y) $ is the long-run average cost given by \eqref{eq:AC}. Theorem~\ref{thm:main-results} states the optimality of $ (s,S) $ policies among all admissible policies. Under the average cost criterion, neither the optimal ordering policy nor the minimum long-run average cost depend on the initial inventory level.

\begin{theorem}
\label{thm:main-results}
Assume that the setup cost $ K $ satisfies \textnormal{(\ref{itm:S1})--(\ref{itm:S4})} and that the holding and shortage cost $ h $	satisfies \textnormal{(\ref{itm:H1})--(\ref{itm:H5})}. Then, with $ (s^{\star},S^{\star}) $ determined by \eqref{eq:s*S*}, $ U(s^{\star}, S^{\star}) $ is an optimal ordering policy that minimizes the long-run average cost, i.e., $ \nu^{\star}  = \gamma(s^{\star}, S^{\star}) $ with $ \gamma $ given by \eqref{eq:gamma}.
\end{theorem}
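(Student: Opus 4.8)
The plan is to prove the two inequalities $\nu^{\star}\le\gamma(s^{\star},S^{\star})$ and $\nu^{\star}\ge\gamma(s^{\star},S^{\star})$ separately, following the lower bound approach. The first is immediate: since $s^{\star}\le S^{\star}$ we have $U(s^{\star},S^{\star})\in\mathcal{U}$, and Proposition~\ref{prop:sS}, together with Proposition~\ref{prop:z-s-S-star}, identifies the long-run average cost incurred by this policy as $\gamma(s^{\star},S^{\star})$ in \eqref{eq:gamma}; hence $\nu^{\star}\le\operatorname{AC}(x,U(s^{\star},S^{\star}))=\gamma(s^{\star},S^{\star})$ for every initial level $x$.

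The substance of the theorem is the reverse inequality. First I would invoke Theorem~\ref{thm:dense} to reduce it to showing $\operatorname{AC}(x,Y)\ge\gamma(s^{\star},S^{\star})$ for every admissible $Y$ subject to a finite order-up-to bound; this reduction is needed because the relative value function constructed below is not Lipschitz (only upward corrections are allowed, so it grows without bound as $z\to-\infty$), and the It\^{o}-formula argument behind the verification theorem does not go through for an arbitrary $Y\in\mathcal{U}$. For $Y$ with an order-up-to bound, the controlled inventory $Z$ stays below that bound, so $f$ only needs to be controlled on a left half-line, and the localization and uniform-integrability estimates required to let $t\to\infty$ in It\^{o}'s formula follow from (\ref{itm:H5}) and standard Brownian moment bounds. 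Next I would apply the verification theorem, Proposition~\ref{prop:lower-bound}: it suffices to exhibit a continuously differentiable $f$ on $\mathbb{R}$ together with the constant $\underaccent{\bar}{\nu}=\gamma(s^{\star},S^{\star})$ satisfying the requisite differential inequalities, namely a drift inequality $\tfrac{1}{2}\sigma^{2}f''(z)-\mu f'(z)+h(z)\ge\underaccent{\bar}{\nu}$ for all $z$ (with equality where no order is placed under $U(s^{\star},S^{\star})$) and an ordering inequality $f(z)-f(z+\xi)\le C(\xi)$ for all $z\in\mathbb{R}$ and $\xi>0$, with $C$ as in \eqref{eq:ordering-cost}. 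The natural candidate for $f$ is the relative value function of $U(s^{\star},S^{\star})$ from Proposition~\ref{prop:sS}: on $[s^{\star},S^{\star}]$ it solves the stated second-order ODE with its boundary conditions, and I would extend it to $(-\infty,s^{\star})\cup(S^{\star},\infty)$ so as to preserve continuous differentiability, continuing the ODE solution above $S^{\star}$ and choosing the extension below $s^{\star}$ compatibly with the ordering cost. The drift inequality off $[s^{\star},S^{\star}]$ is then checked from convexity and polynomial growth of $h$, assumptions (\ref{itm:H2})--(\ref{itm:H5}), together with the defining relation \eqref{eq:optimal-barrier} for $z^{\star}$. Once both inequalities hold, Proposition~\ref{prop:lower-bound} yields $\operatorname{AC}(x,Y)\ge\gamma(s^{\star},S^{\star})$ for these policies, hence $\nu^{\star}\ge\gamma(s^{\star},S^{\star})$ by Theorem~\ref{thm:dense}; combined with the first step this gives $\nu^{\star}=\gamma(s^{\star},S^{\star})$.

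I expect the main obstacle to be the verification of the ordering inequality $f(z)-f(z+\xi)\le C(\xi)$ for all $z$ and $\xi$, globally on $\mathbb{R}$. This is exactly where the quantity-dependent setup cost makes the argument delicate: one must cope with the discontinuities of $K$ (where (\ref{itm:S4}) enters), with the behavior of $K$ near zero encoded by $\ell$ and the right-derivative condition in (\ref{itm:S3}), and with the two regimes $K(0+)>0$ (impulse control, $s^{\star}<S^{\star}$) and $K(0+)=0$ (where $s^{\star}=S^{\star}$ is possible and the continuous-ordering term $\ell Y^{c}$ in \eqref{eq:AC} must be accommodated). Because the smooth pasting conditions generally fail in this model, the inequality cannot be certified through a free-boundary relation; instead it must be extracted from the \emph{global} minimality of $(s^{\star},S^{\star})$ in the finite-dimensional problem \eqref{eq:s*S*}, by comparing $\gamma(s^{\star},S^{\star})$ with $\gamma(s,S)$ for competing pairs $(s,S)$ and translating that comparison into pointwise bounds on $f$ and $f'$. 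Converting optimality of the two-parameter function $\gamma$ into these pointwise estimates is, I expect, the technical heart of the proof.
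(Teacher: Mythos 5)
Your proposal follows essentially the same route as the paper: the upper bound from Proposition~\ref{prop:sS}, and the lower bound via Theorem~\ref{thm:dense} plus the verification theorem (Proposition~\ref{prop:lower-bound}) applied to a suitably modified relative value function of $U(s^{\star},S^{\star})$, with the ordering inequality \eqref{eq:f-d} extracted from the global minimality of $\gamma$ in \eqref{eq:s*S*}. You have also correctly located the crux: for the \emph{unmodified} $V$ of \eqref{eq:V} with $\nu=\gamma(s^{\star},S^{\star})$, the identity $\tilde\gamma(s,\xi)-\gamma(s^{\star},S^{\star})=\tfrac{\mu}{\xi}\bigl(k\xi+K(\xi)+V(s+\xi)-V(s)\bigr)$ makes \eqref{eq:f-d} an immediate consequence of \eqref{eq:s*S*}, but that $V$ has $V'(z)=-\nu/\mu+g_{0}(z)\to\infty$ as $z\to-\infty$ and so violates \eqref{eq:fprime-n}. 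The one substantive step you leave as an expectation is precisely the paper's Lemma~\ref{lem:bar-gamma}: one flattens the derivative by replacing $g_{0}(y)$ with $g_{0}(y\vee\underaccent{\bar}{s})$ for a level $\underaccent{\bar}{s}<z^{\star}$ chosen (via \eqref{eq:theta-infinity}) so that $\theta(\bar\xi)>\bar K\mu/\bar\xi+\gamma(s^{\star},S^{\star})$ with $\underaccent{\bar}{s}=\tilde s(\bar\xi)$, and then re-proves the inequality $\bar\gamma(s,\xi)\ge\gamma(s^{\star},S^{\star})$ for the flattened cost by a case analysis on the position of $[s,s+\xi]$ relative to $[\underaccent{\bar}{s},\underaccent{\bar}{s}+\bar\xi]$; the drift inequality then holds with equality above $\underaccent{\bar}{s}$ (by \eqref{eq:g0-equation}) and strictly below it (by \eqref{eq:z-star}). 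So the architecture of your plan is right and nothing in it would fail, but the construction of the truncation level and the verification that the flattened function still satisfies \eqref{eq:f-d} is genuine work that your outline does not yet contain.
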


The second theorem is a critical result for proving Theorem~\ref{thm:main-results} by the lower bound approach, playing an important role in establishing the verification theorem (see Proposition~\ref{prop:lower-bound} in \S\ref{sec:lower-bound}). It implies that an ordering policy that is optimal within the set of policies having order-up-to bounds must be optimal in all admissible policies. Since policies subject to order-up-to bounds are analytically tractable, it is more convenient to prove the optimality within these policies.

For $m=1,2,\ldots$, let
\[
\mathcal{U}_{m} = \{Y\in\mathcal{U}:Z(t)\leq m\mbox{ for all ordering time }t\},
\]
which is the set of admissible policies with an order-up-to bound at $m$. Clearly, $U(s,S)\in\mathcal{U}_{m}$ if $s\leq S\leq m$. Because of the Brownian demand process, it is possible that $Z(t) > m$ under a policy in $\mathcal{U}_{m}$ if $t$ is not an ordering time.

\begin{theorem}
\label{thm:dense}
Assume that the setup cost $ K $ satisfies \textnormal{(\ref{itm:S1})--(\ref{itm:S3})} and that the holding and shortage cost $h$ is nondecreasing on $[0,\infty)$. Then, for any admissible policy $Y$, there exists a sequence of admissible policies $\{Y_{m}\in \mathcal{U}_{m}: m=1,2,\ldots \}$ such that
\begin{equation}
\lim_{m\rightarrow\infty} \operatorname{AC}(x,Y_{m})\leq \operatorname{AC}(x,Y).
\label{eq:dense}
\end{equation}
\end{theorem}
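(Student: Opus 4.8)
The plan is to prove this by an explicit truncation of the given policy. If $\operatorname{AC}(x,Y)=\infty$ the inequality is vacuous — take any $(s,S)$ policy with $S\le m$ for $Y_m$ — so assume $\operatorname{AC}(x,Y)<\infty$; this already forces $\tfrac1t\mathbb{E}_x[\mathcal{H}_Y(t)]$ and $\tfrac1t\mathbb{E}_x[kY(t)+\ell Y^c(t)]$ to be bounded in $t$. For a fixed $m$ I would build $Y_m$ from $Y$ so that its inventory process $Z_m=X+Y_m$ never exceeds $m$ at an ordering time. The basic device is \emph{clipping}: carry a running deficit $g(t):=Y(t)-Y_m(t)\ge 0$, and whenever following an increment of $Y$ (a jump $\Delta Y(u)$, or $\mathrm{d}Y^c$) would push $Z_m$ above $m$, let $Y_m$ increase only by the amount that brings $Z_m$ back up to level $m$ and dump the remainder into $g$. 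Since the excursions of $Z-g$ into the region $\{Z_m<0\}$ would otherwise inflate the shortage cost without bound, I would couple clipping with a \emph{recovery} rule: whenever $g(t)>0$ and $Z_m(t)<m$, let $Y_m$ also order an extra quantity (at most $\min(g(t),m-Z_m(t))$) to push $Z_m$ back toward $m$ while reducing $g$ by the same amount; when $K(0+)=0$ this extra ordering may be done through the continuous part, and when $K(0+)>0$ it is done in discrete batches triggered whenever $Z_m$ has dropped by a fixed amount, so that $Y_m$ still places only finitely many orders over finite horizons.

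By construction $g$ is nonnegative and $Y_m$ is nondecreasing, nonnegative and adapted, so $Y_m\in\mathcal{U}$; and since $Y_m$ raises $Z_m$ above an ordering level only through a clipped or a recovery increment, each of which stops at $m$, we have $Z_m(t)\le m$ at every ordering time of $Y_m$, i.e.\ $Y_m\in\mathcal{U}_m$. The total order quantity of $Y_m$ over $[0,t]$ is $Y(t)-g(t)$ plus whatever recovery has injected, and since every clipped unit is eventually re-injected, $Y_m(t)$ differs from $Y(t)$ only by the current deficit; in particular $kY_m(t)\le kY(t)+kg(t)$ and $\ell Y_m^c(t)\le \ell Y^c(t)+\ell g(t)$.

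The substance of the proof is the cost comparison. For the holding and shortage cost, on the event $\{Z_m(u)\ge 0\}$ we have $0\le Z_m(u)\le Z(u)$, so $h(Z_m(u))\le h(Z(u))$ because $h$ is nondecreasing on $[0,\infty)$; the only surplus comes from $\{Z_m(u)<0\}$, and here the recovery rule confines the drop of $Z_m$ below $Z$, so this surplus — over each recovery stretch — is bounded by a fixed multiple of the holding cost that $Y$ itself pays while its own inventory dwells above level $m$ after the corresponding over-shoot. For the ordering cost, a suppressed order is free by (\ref{itm:S1}); a clipped-but-nonzero order has quantity in $(0,m]$, which (\ref{itm:S3}) together with boundedness of $K$ on $[0,m]$ keeps under control relative to the (nonnegative) setup cost of the order it replaces; and the recovery orders are small, so by (\ref{itm:S3}) their cumulative setup cost is comparable to $\ell$ times the recovered quantity, which is again charged against the holding cost $Y$ incurs above $m$. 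Summing over $[0,t]$, dividing by $t$ and sending $t\to\infty$ yields $\operatorname{AC}(x,Y_m)\le\operatorname{AC}(x,Y)+\varepsilon_m$, where $\varepsilon_m$ is controlled by the rate at which $Y$'s inventory exceeds $m$ weighted by the excess holding cost, a quantity that tends to $0$ as $m\to\infty$ since $Y$'s long-run average holding cost is finite. The main obstacle I foresee is precisely this accounting: because $K$ is neither monotone nor convex, an individual clipped or recovery order can be strictly more expensive than the order of $Y$ it accounts for, so the bookkeeping must be \emph{global} — controlling the cumulative excess setup and shortage cost of $Y_m$ by costs that $Y$ is \emph{forced} to incur whenever it sends the inventory above $m$ — and one must then verify that this global bound vanishes in the limit.
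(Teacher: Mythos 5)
Your high-level strategy --- truncate $Y$ at level $m$, re-inject the clipped quantity to control the shortage cost, and show the excess cost vanishes as $m\to\infty$ --- is the same as the paper's, but the two places where the proof actually has to do work are left as assertions, and in both places the asserted bounds are either unproven or false as stated. First, the shortage cost. The paper's construction makes the surplus shortage cost \emph{exactly zero}: its rule (\ref{itm:J4}) fires at the moment $Z_m$ hits $0$ and jumps $Z_m$ up to $(Z(t)\wedge m)^{+}$, i.e.\ it resynchronizes the truncated inventory with the original one before $Z_m$ can go negative while $Z$ is positive; Lemma~\ref{lem:Zm} then gives $Z_m(t)=Z(t)$ whenever $Z_m(t)<0$, so on $\{Z_m\ge 0\}$ one uses $Z_m\le Z$ and monotonicity of $h$, and on $\{Z_m<0\}$ the costs are equal. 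Your looser recovery rule (``push $Z_m$ back toward $m$'') does not guarantee $Z_m=Z$ on $\{Z_m<0\}$, and your proposed bound on the surplus --- a multiple of the holding cost $Y$ pays while its inventory is above $m$ --- is not established. Moreover, even if that bound held, your final step fails: finiteness of $\limsup_t\frac1t\mathbb{E}_x\!\int_0^t h(Z(u))\,\mathrm{d}u$ does \emph{not} imply that $\limsup_t\frac1t\mathbb{E}_x\!\int_0^t h(Z(u))1_{\{Z(u)>m\}}\,\mathrm{d}u\to 0$ as $m\to\infty$ (the $\limsup$ in $t$ and the limit in $m$ do not commute without a uniform integrability argument you do not supply), so your $\varepsilon_m$ is not shown to vanish.

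Second, the extra setup cost. You correctly observe that because $K$ is neither monotone nor convex, order-by-order charging fails and the bookkeeping must be global, but you never identify the mechanism that makes the global bound work. The paper's mechanism is the $m/2$ buffer built into rules (\ref{itm:J1})--(\ref{itm:J3}): any ``extra'' jump (a clipped jump of type (\ref{itm:J3}) or a resynchronization of type (\ref{itm:J4})) sends $Z_m$ to a level at least $m/2$ above the trigger level of the next such jump, so consecutive extra jumps are separated by a downward excursion of the Brownian path $X$ of size $m/2$, whose expected duration is $m/(2\mu)$. Hence the expected number of extra jumps by time $t$ is at most $2\mu t/m+1$ for each type, each costs at most $\bar K=\sup_\xi K(\xi)<\infty$ by (\ref{itm:S2}), and the excess average cost is at most $4\mu\bar K/m$ --- a bound that is uniform in $Y$ and requires no tail control of $Y$'s holding cost. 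Without some analogue of this separation argument, your recovery orders could in principle fire arbitrarily often, and the claim that their cumulative setup cost is ``comparable to $\ell$ times the recovered quantity'' has no force when $K(0+)>0$ (where $\ell=\infty$) and is not justified when $K(0+)=0$. These two gaps --- the unbounded shortage surplus and the uncontrolled frequency of extra orders --- are precisely the content of the theorem, so the proposal as written does not constitute a proof.
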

Let $\bar{\mathcal{U}} = \bigcup_{m=1}^{\infty} \mathcal{U}_{m}$ be the set of admissible policies subject to order-up-to bounds. Theorem~\ref{thm:dense} implies that a policy that is optimal in $\bar{\mathcal{U}}$ must be optimal in $\mathcal{U}$. We will prove this theorem in \S\ref{sec:proof-3}.

\section{A lower bound for long-run average costs}
\label{sec:lower-bound}

In this section, we establish a lower bound for the long-run average cost under an arbitrary admissible policy. This lower bound is specified by differential inequalities with respect to a relative value function. In the lower bound approach, such a result is referred to as a verification theorem.

\begin{proposition}
\label{prop:lower-bound}
Assume that $ K $ satisfies \textnormal{(\ref{itm:S1})--(\ref{itm:S3})} and that $h$ satisfies \textnormal{(\ref{itm:H1})--(\ref{itm:H5})}. Let $f:\mathbb{R} \rightarrow \mathbb{R}$ be a continuously differentiable function with $f'$ absolutely continuous. Assume that
\begin{equation}
f(z_{1})-f(z_{2}) \geq -K(z_{1}-z_{2})-k(z_{1}-z_{2})\quad\mbox{for }z_{1}>z_{2},
\label{eq:f-d}
\end{equation}
and that there exists a positive integer $ d $ and two positive numbers $a_{0}$ and $a_{1}$ such that
\begin{equation}
|f'(z)|<a_{0} \quad\mbox{for }z < 0
\label{eq:fprime-n}
\end{equation}
and
\begin{equation}
|f'(z)|<a_{0}+a_{1}z^{d}\quad\mbox{for }z\geq 0.
\label{eq:fprime-p}
\end{equation}
Let $\Gamma$ be the generator of $X$ in \eqref{eq:X}, i.e.,
\[
\Gamma f(z) = \frac{1}{2}{\sigma^{2}f''(z)}-\mu f'(z).
\]
Assume that there exists a positive number $\underaccent{\bar}{\nu}$ that satisfies
\begin{equation}
\Gamma f(z)+h(z)\geq \underaccent{\bar}{\nu} \quad\mbox{for }z\in\mathbb{R}\mbox{ such that }
f''(z) \mbox{ exists.}
\label{eq:lower-bound}
\end{equation}
Then, $ \operatorname{AC}(x,Y)\geq \underaccent{\bar}{\nu} $ for $ x\in\mathbb{R} $ and $ Y\in\mathcal{U} $, where $\operatorname{AC}(x,Y)$ is given by \eqref{eq:AC}.
\end{proposition}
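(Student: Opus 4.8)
The plan is to derive the lower bound by applying an appropriate version of It\^{o}'s formula to the process $f(Z(t))$ under an arbitrary admissible policy $Y$, using the differential inequalities \eqref{eq:f-d}--\eqref{eq:lower-bound} to bound the increments of $f(Z(t))$ from below by the running cost minus $\underaccent{\bar}{\nu}$. However, since $f'$ is only assumed absolutely continuous (so $f''$ exists merely a.e.\ and $f(Z(t))$ need not be a semimartingale in the classical $C^2$ sense), I would first invoke the generalized It\^{o}--Tanaka/Meyer formula for $C^1$ functions with absolutely continuous derivative — or, more simply, approximate $f$ by a sequence of $C^2$ functions agreeing with $f$ outside shrinking neighborhoods of the measure-zero bad set, so that \eqref{eq:lower-bound} is used only where $f''$ exists. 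The key decomposition is $Z(t) = X(t) + Y(t)$ with $Y = Y^c + \sum \Delta Y$, so It\^{o}'s formula gives
\[
f(Z(t)) = f(x) + \int_0^t \Gamma f(Z(u))\,\mathrm{d}u + \sigma\int_0^t f'(Z(u))\,\mathrm{d}B(u) + \int_0^t f'(Z(u))\,\mathrm{d}Y^c(u) + \sum_{0\le u\le t}\bigl[f(Z(u)) - f(Z(u-))\bigr],
\]
where the continuous-part integral against $\mathrm{d}Y^c$ appears because the $\mathrm{d}Y$ terms are of bounded variation (no second-order correction), and the jump sum collects the contributions at jump times of $Y$.

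Next I would bound each piece. For the drift integral, \eqref{eq:lower-bound} gives $\int_0^t \Gamma f(Z(u))\,\mathrm{d}u \ge \underaccent{\bar}{\nu} t - \int_0^t h(Z(u))\,\mathrm{d}u = \underaccent{\bar}{\nu}t - \mathcal{H}_Y(t)$. For the jump sum, at each jump time $u$ we have $Z(u)-Z(u-) = \Delta Y(u) > 0$, and \eqref{eq:f-d} with $z_1 = Z(u)$, $z_2 = Z(u-)$ yields $f(Z(u)) - f(Z(u-)) \ge -K(\Delta Y(u)) - k\,\Delta Y(u)$. For the continuous-part integral, I would use \eqref{eq:f-d} in its infinitesimal form: dividing $f(z_1)-f(z_2) \ge -K(z_1-z_2)-k(z_1-z_2)$ by $z_1 - z_2$ and letting $z_1 \downarrow z_2$ shows $f'(z) \ge -\ell - k$ for all $z$ (recalling $\ell = \liminf_{\xi\downarrow 0}K(\xi)/\xi$ from \eqref{eq:ell}), hence $\int_0^t f'(Z(u))\,\mathrm{d}Y^c(u) \ge -(k+\ell)Y^c(t)$. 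Collecting these three bounds and rearranging reproduces exactly the cost terms in \eqref{eq:AC}:
\[
f(Z(t)) - f(x) + \sigma\int_0^t f'(Z(u))\,\mathrm{d}B(u) \ge \underaccent{\bar}{\nu}t - \mathcal{H}_Y(t) - \sum_{0\le u\le t}K(\Delta Y(u)) - kY(t) - \ell Y^c(t).
\]

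The remaining work is to take expectations and then $\limsup_{t\to\infty}\frac1t(\cdot)$. This is where I expect the main obstacle, and this is precisely the point flagged in the introduction as requiring Theorem~\ref{thm:dense}. The stochastic integral $\sigma\int_0^t f'(Z(u))\,\mathrm{d}B(u)$ is only a local martingale; to conclude $\mathbb{E}_x\!\bigl[\int_0^t f'(Z(u))\,\mathrm{d}B(u)\bigr] = 0$ one needs the integrand to be square-integrable, which the polynomial bounds \eqref{eq:fprime-n}--\eqref{eq:fprime-p} control only if $Z(u)$ itself has controlled polynomial moments — and under a general admissible $Y$ the order-up-to levels may be unbounded, so $Z$ need not have such moments. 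The resolution is to first prove the bound for policies in $\mathcal{U}_m$: under an order-up-to bound at $m$, the controlled process satisfies $Z(t) \le m \vee (x - \inf_{u\le t}(D(u)-D(t)))$-type estimates, giving $Z$ all polynomial moments uniformly enough to apply optional stopping (via a localizing sequence of stopping times $\tau_n \uparrow \infty$, monotone/dominated convergence, and the moment bounds from \eqref{itm:H5} to handle $\mathbb{E}_x[\mathcal{H}_Y(t)]$ and $\mathbb{E}_x[f(Z(t))]$). One also needs $\mathbb{E}_x[f(Z(t))]/t \to 0$, again supplied by the polynomial growth of $f'$ together with the $\mathcal{U}_m$ moment bounds; if $\operatorname{AC}(x,Y)=\infty$ the claim is trivial, so one may assume the right-hand side of \eqref{eq:AC} is finite. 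Having established $\operatorname{AC}(x,Y) \ge \underaccent{\bar}{\nu}$ for every $Y \in \bar{\mathcal{U}} = \bigcup_m \mathcal{U}_m$, Theorem~\ref{thm:dense} extends it to all $Y \in \mathcal{U}$: given arbitrary $Y$, choose $Y_m \in \mathcal{U}_m$ with $\lim_m \operatorname{AC}(x,Y_m) \le \operatorname{AC}(x,Y)$, and since each $\operatorname{AC}(x,Y_m) \ge \underaccent{\bar}{\nu}$, passing to the limit gives $\operatorname{AC}(x,Y) \ge \underaccent{\bar}{\nu}$, completing the proof.
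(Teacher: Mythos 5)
Your overall route is the paper's: It\^{o}'s formula applied to $f(Z(t))$, the three bounds coming from \eqref{eq:lower-bound}, from \eqref{eq:f-d} at jump times, and from $f'\geq -(k+\ell)$ for the $\mathrm{d}Y^{c}$ integral, followed by the reduction to $\bar{\mathcal{U}}=\bigcup_{m}\mathcal{U}_{m}$ via Theorem~\ref{thm:dense}; the integrability points you flag for the stochastic integral are exactly those supplied by Lemma~\ref{lem:Exf}. There is, however, one genuinely misjustified step: you assert that $\mathbb{E}_{x}[f(Z(t))]/t\to 0$ is ``supplied by the polynomial growth of $f'$ together with the $\mathcal{U}_{m}$ moment bounds.'' This is false. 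An order-up-to bound controls $Z$ only from above (Lemma~\ref{lem:Z-dominance} dominates $Z$ by a reflected Brownian motion), so the moment estimates, and hence \eqref{eq:limsup-fz}, apply only to $f(Z(t))\cdot 1_{[0,\infty)}(Z(t))$. Nothing prevents $Z$ from drifting to $-\infty$ linearly under a policy in $\mathcal{U}_{m}$ (e.g.\ $Y\equiv 0$, which is in every $\mathcal{U}_{m}$), and then \eqref{eq:fprime-n} gives only $|f(z)|\leq a_{0}|z|+c_{0}$ for $z<0$, so $\mathbb{E}_{x}[f(Z(t))]/t$ can converge to a strictly positive constant.

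The paper closes this hole with a dichotomy rather than with moment bounds: after obtaining \eqref{eq:AC-bound}, either $\liminf_{t\rightarrow\infty}\frac{1}{t}\mathbb{E}_{x}[f(Z(t))]\leq 0$ and the conclusion is immediate, or this liminf exceeds some $c>0$; in the latter case, since the positive-part contribution vanishes by \eqref{eq:limsup-fz}, the negative-part contribution must exceed $ct/2$ for all large $t$, which via \eqref{eq:fprime-n} forces $\mathbb{E}_{x}[|Z(t)|]$ to grow linearly in $t$, and then the coercivity $h(z)\geq c_{1}|z|-c_{2}$ (from (\ref{itm:H2}) and (\ref{itm:H4})) forces $\operatorname{AC}(x,Y)=\infty\geq\underaccent{\bar}{\nu}$. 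Your parenthetical ``one may assume $\operatorname{AC}(x,Y)$ finite'' gestures at the contrapositive of exactly this argument, but the implication from finiteness of the average cost to the needed control of the endpoint expectation $\mathbb{E}_{x}[f(Z(t))]$ is the substantive step and must be spelled out; as written, your proof has no valid justification for discarding the $\mathbb{E}_{x}[f(Z(t))]/t$ term.
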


If we can find an ordering policy whose relative value function satisfies all the assumptions on~$f$ and whose long-run average cost  $\underaccent{\bar}{\nu}$ satisfies \eqref{eq:lower-bound}, then by Proposition~\ref{prop:lower-bound}, this policy must be optimal in all admissible policies. To prove Proposition~\ref{prop:lower-bound}, we need two technical lemmas about inventory processes subject to an order-up-to bound. 

For a given positive integer $m$, let
\begin{equation}
Y^{m}(t) = \sup_{0\leq u\leq t} (m-X(u))^{+}
\quad\mbox{and}\quad
Z^{m}(t) = X(t) + Y^{m}(t).
\label{eq:Z-m}
\end{equation}
By Lemma~\ref{lem:Skorokhod-Map}, $Y^{m} = \{ Y^{m}(t) : t \geq 0 \}$ is the base stock policy with base stock level $m$, under which $Z^{m} = \{ Z^{m}(t) : t \geq 0 \}$ is a reflected Brownian motion starting from $Z^{m}(0) = x\vee m$ with lower reflecting barrier at $m$. The next lemma states that $Z^{m}$ dominates all inventory processes that have an order-up-to bound at $m$.
\begin{lemma}
\label{lem:Z-dominance}
For a positive integer $m$, let $Z$ be the inventory process given by \eqref{eq:Z} with $Y\in\mathcal{U}_{m}$ and $Z^{m}$ the inventory process given by \eqref{eq:Z-m} with $X$ defined by \eqref{eq:X}. Then, $Z(t)\leq Z^{m}(t)$ on each sample path for all $t\geq 0$.
\end{lemma}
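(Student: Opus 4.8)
The plan is to argue pathwise that $Z^{m}$ stays above $Z$ by exploiting the extremal property of the base stock policy $Y^{m}$ established in Lemma~\ref{lem:Skorokhod-Map}. Fix a sample path $\omega$ and suppress it from the notation. Since both processes are driven by the same uncontrolled process $X$, we have $Z(t) - Z^{m}(t) = Y(t) - Y^{m}(t)$, so it suffices to show $Y(t) \leq Y^{m}(t)$ for all $t \geq 0$. Recall from \eqref{eq:Z-m} that $Y^{m}(t) = \sup_{0 \leq u \leq t}(m - X(u))^{+}$, which is precisely the minimal nondecreasing process keeping $X + Y^{m} \geq m$; this is the content of Lemma~\ref{lem:Skorokhod-Map} applied with the constant $m$ in place of $s$ and $\phi = X$. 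Intuitively, any admissible $Y \in \mathcal{U}_{m}$ can never have pushed the inventory higher than $Z^{m}$, because whenever $Y$ orders it leaves $Z$ at or below $m$, whereas $Z^{m}$ is already at least $m$ at every instant.

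The key steps, in order, are as follows. First, I would establish the uniform bound $Z^{m}(t) \geq m$ for all $t \geq 0$, which is immediate from property (ii) in Lemma~\ref{lem:Skorokhod-Map} (with $s = m$). Second, I would show that at every ordering time $t$ of $Y$ we have $Z(t) \leq m$ by the definition of $\mathcal{U}_{m}$, and hence $Z(t) \leq Z^{m}(t)$ at such instants. Third, I would argue that between consecutive ordering times of $Y$ the difference $Z^{m}(t) - Z(t)$ cannot become negative: on any interval on which $Y$ is constant, $Z$ evolves as $X$ plus a constant, while $Z^{m}$ evolves as $X$ plus a nondecreasing term $Y^{m}$, so $Z^{m}(t) - Z(t)$ is nondecreasing on that interval. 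Combining the second and third steps: if $Z^{m}(t_{0}) - Z(t_{0}) \geq 0$ at the left endpoint $t_{0}$ of an interval of constancy of $Y$, the difference stays nonnegative throughout the interval; and at each ordering time the difference is again nonnegative by step two. A clean way to package this is to set $\vartheta(t) = Z^{m}(t) - Z(t) = Y^{m}(t) - Y(t)$ and show $\inf_{t \geq 0} \vartheta(t) \geq 0$: if $\vartheta(t^{*}) < 0$ for some $t^{*}$, let $t_{0} = \sup\{t \leq t^{*} : t \text{ is an ordering time of } Y \text{ or } t = 0\}$; on $(t_{0}, t^{*}]$ the process $Y$ is constant so $\vartheta$ is nondecreasing there, forcing $\vartheta(t_{0}+) \leq \vartheta(t^{*}) < 0$, and one checks $\vartheta(t_{0}) \geq 0$ either from the initial condition $Z^{m}(0) = x \vee m \geq x \geq Z(0-)$-type reasoning or from $Z(t_{0}) \leq m \leq Z^{m}(t_{0})$ at the ordering time $t_{0}$, a contradiction after accounting for the right-continuity of $Y$ and $Y^{m}$.

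The main obstacle I anticipate is bookkeeping at the ordering times and at time zero: the process $Y$ may have a jump at $0$ (the initial order $\xi_{0}$), may have infinitely many jump or ordering times accumulating in a finite interval, and the condition $Z(t) \leq m$ in the definition of $\mathcal{U}_{m}$ is imposed at ordering times, which for a right-continuous $Y$ means $Z(t) = Z(t) \leq m$ \emph{after} the order is executed. One must be careful that "ordering time" (where $Y$ increases, in the sense defined in \S\ref{sec:model}) captures every instant at which $Z$ could have been pushed up, including accumulation points, and that the monotonicity argument for $\vartheta$ on intervals free of ordering times is valid even when those intervals are not maximal. I would handle this by working with the closed set $\mathcal{O}$ of ordering times of $Y$: on its complement, which is open, $Y$ is locally constant, so $\vartheta$ is nondecreasing on each component; on $\mathcal{O}$ itself, $Z \leq m \leq Z^{m}$ directly. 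Since $\vartheta$ is right-continuous (both $Y^{m}$ and $Y$ are), $\vartheta$ cannot dip below zero by continuity/monotonicity considerations, giving $Z(t) \leq Z^{m}(t)$ for all $t \geq 0$.
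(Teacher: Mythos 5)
Your argument is correct and follows essentially the same route as the paper: both proofs anchor the comparison at times where $Z(t)\leq m\leq Z^{m}(t)$ and then use the fact that $Y$ is constant on the intervening excursion while $Y^{m}$ is nondecreasing (the paper anchors at the last time $Z$ itself was at or below $m$, you anchor at the last ordering time, but the mechanism is identical). One small caveat: the set of ordering times need not be closed (it can fail to contain left accumulation points), yet your sup-based contradiction still goes through because at such a point $\Delta Y(t_{0})=0$ and $Z(t_{0})=Z(t_{0}-)=\lim_{n}Z(t_{n})\leq m$ along ordering times $t_{n}\uparrow t_{0}$, so the inequality $\vartheta(t_{0})\geq 0$ you need persists on the closure.
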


The marginal distribution of $Z^{m}$ can be specified as follows. Let
\[
\psi_{x}^{m}(v,t)=\mathbb{P}[Z^{m}(t) > v \,|\, X(0)=x]\quad \mbox{for }t\geq 0\mbox{ and
}v\geq 0.
\]
Then by (3.63) in \citet{Harrison13}, $\psi_{x}^{m}(v,t) = 1$ for $0\leq v< m$ and
\begin{equation}
\psi_{x}^{m}(v,t) = \Phi\Big(\frac{-v+(x\vee m)-\mu t}{\sigma t^{1/2}}\Big) +\mathrm{e}^{-\lambda (v-m)} \Phi\Big(\frac{-v-(x\vee m)+\mu t}{\sigma t^{1/2}}\Big) \quad\mbox{for } v\geq m,
\label{eq:psi}
\end{equation}
where $\Phi$ is the standard Gaussian cumulative distribution function. Because $Z^{m}$ dominates all inventory processes that have an order-up-to bound at $m$, we may use its marginal distribution to establish boundedness results for policies in $\bar{\mathcal{U}}$.

\begin{lemma}
\label{lem:Exf} 
Let $f:\mathbb{R}\rightarrow\mathbb{R}$ be a differentiable function and $Z$ the inventory process given by \eqref{eq:Z} with $Y\in\bar{\mathcal{U}}$. Assume that there exists a positive integer $ d $ and two positive numbers $a_{0}$ and $a_{1}$ such that
\begin{equation}
|f'(z)|<a_{0}+a_{1}|z|^{d}\quad\mbox{for }z\in\mathbb{R}.
\label{eq:fprime}
\end{equation}
Then,
\begin{equation}
\mathbb{E}_{x}\big[\big|f(Z(t))\big|\big]< \infty\quad\mbox{for }t\geq 0,
\label{eq:ExfZ1}
\end{equation}
and
\begin{equation}
\mathbb{E}_{x}\Big[\int_{0}^{t}f'(Z(u))^2\,\mathrm{d}u\Big] < \infty \quad\mbox{for }t\geq 0.
\label{eq:ExfZ2}
\end{equation}
Moreover,
\begin{equation}
\lim_{t\rightarrow\infty}\frac{1}{t}\mathbb{E}_{x}\big[\big|f(Z(t)) \cdot 1_{[0,\infty)}(Z(t))\big|\big] = 0.
\label{eq:limsup-fz}
\end{equation}
\end{lemma}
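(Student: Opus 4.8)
The plan is to use the pathwise domination $Z(t)\le Z^{m}(t)$ supplied by Lemma~\ref{lem:Z-dominance} to reduce all three claims to moment estimates for the reflected Brownian motion $Z^{m}$ and the Gaussian process $X$. Since $Y\in\bar{\mathcal{U}}$, fix $m$ with $Y\in\mathcal{U}_{m}$, so that $Z(t)\le Z^{m}(t)$ for all $t$ on every sample path; also $Z(t)=X(t)+Y(t)\ge X(t)$ because $Y(t)\ge 0$. Integrating \eqref{eq:fprime} from $0$ gives a polynomial bound $|f(z)|\le c_{0}+c_{1}|z|^{d+1}$ for constants $c_{0},c_{1}>0$. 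Combining these, on every sample path $|f(Z(t))|\le c_{0}+c_{1}(Z^{m}(t))^{d+1}+c_{1}|X(t)|^{d+1}$ (splitting $|Z(t)|^{d+1}$ into positive and negative parts and using $Z(t)^{+}\le Z^{m}(t)$, $Z(t)^{-}\le|X(t)|$), while on $\{Z(t)\ge 0\}$ one has $0\le Z(t)\le Z^{m}(t)$ and hence $|f(Z(t))|\,1_{[0,\infty)}(Z(t))\le c_{0}+c_{1}(Z^{m}(t))^{d+1}$.

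The first ingredient is trivial: $X(t)\sim\mathcal{N}(x-\mu t,\sigma^{2}t)$, so every moment $\mathbb{E}_{x}[|X(t)|^{p}]$ is finite and is continuous in $t$, hence bounded on any compact time interval. The second, more substantial, ingredient is the moment bound for $Z^{m}$. Here I would use the explicit tail formula \eqref{eq:psi}: writing $\mathbb{E}_{x}[(Z^{m}(t))^{p}]=\int_{0}^{\infty}p v^{p-1}\psi_{x}^{m}(v,t)\,\mathrm{d}v$ and splitting the range at $v=m$, the part over $[0,m)$ contributes $m^{p}$; in the part over $[m,\infty)$ the term carrying $\mathrm{e}^{-\lambda(v-m)}$ is dominated by $\int_{m}^{\infty}pv^{p-1}\mathrm{e}^{-\lambda(v-m)}\,\mathrm{d}v$, a finite constant independent of $t$, and the remaining Gaussian term equals $\mathbb{E}\big[\big(((x\vee m)-\mu t+\sigma t^{1/2}N)^{+}\big)^{p}\big]$ with $N$ standard normal, which is continuous in $t$ on $[0,\infty)$ and tends to $0$ as $t\to\infty$ because its argument drifts to $-\infty$. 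It follows that $\mathbb{E}_{x}[(Z^{m}(t))^{p}]<\infty$ for each $t$ and, crucially, $\sup_{t\ge 0}\mathbb{E}_{x}[(Z^{m}(t))^{p}]<\infty$; alternatively one may invoke the positive recurrence of a reflected Brownian motion with negative drift and the convergence of its moments to those of the stationary $\mathrm{Exp}(\lambda)$-law.

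With these in hand the conclusions follow quickly. For \eqref{eq:ExfZ1}, take expectations in $|f(Z(t))|\le c_{0}+c_{1}(Z^{m}(t))^{d+1}+c_{1}|X(t)|^{d+1}$. For \eqref{eq:ExfZ2}, use $f'(Z(u))^{2}\le 2a_{0}^{2}+2a_{1}^{2}|Z(u)|^{2d}\le 2a_{0}^{2}+2a_{1}^{2}(Z^{m}(u))^{2d}+2a_{1}^{2}|X(u)|^{2d}$, apply Tonelli, bound $\mathbb{E}_{x}[(Z^{m}(u))^{2d}]$ uniformly in $u\ge 0$ and $\mathbb{E}_{x}[|X(u)|^{2d}]$ uniformly for $u\in[0,t]$, so the integrand is bounded on $[0,t]$. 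For \eqref{eq:limsup-fz}, divide $\mathbb{E}_{x}[|f(Z(t))|\,1_{[0,\infty)}(Z(t))]\le c_{0}+c_{1}\sup_{s\ge 0}\mathbb{E}_{x}[(Z^{m}(s))^{d+1}]$ by $t$ and let $t\to\infty$.

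The one genuine obstacle is the \emph{uniform}-in-$t$ moment bound $\sup_{t\ge 0}\mathbb{E}_{x}[(Z^{m}(t))^{p}]<\infty$ required by \eqref{eq:limsup-fz}: a crude pathwise estimate of $Z^{m}(t)$ via the running minimum of $X$ only yields moments that grow polynomially in $t$, which is useless after dividing by $t$. It is the negative drift, together with reflection at $m$, that keeps these moments bounded, and this is exactly what the explicit formula \eqref{eq:psi} (equivalently, positive recurrence) provides. Everything else — integrating the growth bound on $f'$, the Gaussian moment computations, and the application of Tonelli's theorem — is routine.
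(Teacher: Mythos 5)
Your proposal is correct and follows essentially the same route as the paper: pathwise domination $X(t)\le Z(t)\le Z^{m}(t)$ from Lemma~\ref{lem:Z-dominance}, the explicit tail formula \eqref{eq:psi} to get $\sup_{t\ge 0}\mathbb{E}_{x}[Z^{m}(t)^{p}]<\infty$, the polynomial bound on $f$, and Tonelli for \eqref{eq:ExfZ2}. The only (immaterial) difference is how the uniform-in-$t$ moment bound is justified: the paper dominates the Gaussian term by a $t$-free integrand via the AM--GM inequality, while you argue via continuity in $t$ plus decay as $t\to\infty$; both are valid.
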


The proof of Proposition~\ref{prop:lower-bound} relies on Theorem~\ref{thm:dense} and Lemma~\ref{lem:Exf}, which enable us to establish a lower bound for long-run average costs by examining policies in $ \bar{\mathcal{U}} $, instead of all admissible policies.

\begin{proof}[Proof of Proposition~\ref{prop:lower-bound}]
By Theorem~\ref{thm:dense}, it suffices to consider $Y\in\bar{\mathcal{U}}$, in which case \eqref{eq:ExfZ1}--\eqref{eq:limsup-fz} hold. By \eqref{eq:Z}, \eqref{eq:continuous-part}, and It\^{o}'s formula (see, e.g., Lemma~3.1 in \citealt{DaiYao13a}),
\[
f(Z(t))  =f(x)+\int_{0}^{t}\Gamma f(Z(u))\,\mathrm{d}u+\sigma \int_{0}^{t}f^{\prime }(Z(u))\,\mathrm{d}B(u)+\int_{0}^{t}f^{\prime }(Z(u))\, \mathrm{d}Y^{c}(u)+\sum_{0\leq u\leq t}\Delta f(Z(u))
\]
for $t\geq 0$. Then by \eqref{eq:lower-bound},
\begin{align}
f(Z(t)) & \geq  f(x)+\underaccent{\bar}{\nu} t - \int_{0}^{t}h(Z(u))\,\mathrm{d}u+\sigma \int_{0}^{t}f^{\prime }(Z(u))\,\mathrm{d}B(u)\nonumber\\
& \quad +\int_{0}^{t}f^{\prime }(Z(u))\, \mathrm{d}Y^{c}(u)+\sum_{0\leq u\leq t} \Delta f(Z(u)).
\label{eq:f-bound-1}
\end{align}
By \eqref{eq:ell} and \eqref{eq:f-d}, $ f^{\prime}(z) \geq -(k+\ell) $ for $ z \in \mathbb{R} $, where $ \ell = \infty $ if $ K(0+) > 0 $. Then by \eqref{eq:f-d} and \eqref{eq:f-bound-1},
\begin{align*}
f(Z(t)) & \geq  f(x)+\underaccent{\bar}{\nu} t - \int_{0}^{t}h(Z(u))\,\mathrm{d}u+\sigma \int_{0}^{t}f^{\prime }(Z(u))\,\mathrm{d}B(u)\nonumber\\
& \quad - (k+\ell) Y^{c}(t) - \sum_{0\leq u\leq t} \big( K( \Delta Z(u) ) + k\Delta Z(u) \big).
\end{align*}
Since $\Delta Z(t)=\Delta Y(t)$ and $ Y(t)=\sum_{0\leq u\leq t} \Delta Y(u) + Y^{c}(t)$, the above inequality can be written as
\begin{equation}
f(Z(t)) + \int_{0}^{t}h(Z(u))\,\mathrm{d}u + \sum_{0\leq u\leq t}K(\Delta Y(u))+kY(t)+\ell Y^{c}(t) \geq f(x)+\underaccent{\bar}{\nu} t +\sigma \int_{0}^{t}f^{\prime}(Z(u))\,\mathrm{d}B(u).
\label{eq:cost-bound}
\end{equation}
By \eqref{eq:ExfZ2} and Theorem~3.2.1 in \citet{Oksendal03},
\[
\mathbb{E}_{x}\Big[\int_{0}^{t}f^{\prime }(Z(u))\,\mathrm{d}B(u)\Big]=0.
\]
Since \eqref{eq:ExfZ1} holds, we can take expectation on both sides of
\eqref{eq:cost-bound}, which yields
\[
\mathbb{E}_{x}\big[f(Z(t))\big] + \mathbb{E}_{x}\Big[\int_{0}^{t}h(Z(u))\,\mathrm{d}u +
\sum_{0\leq u\leq t}K(\Delta Y(u))+kY(t)+\ell Y^{c}(t)\Big]
\geq f(x)+\underaccent{\bar}{\nu} t.
\]
Dividing both sides by $t$ and letting $t$ go to infinity, we have
\[
\liminf_{t \rightarrow  \infty} \frac{1}{t} \mathbb{E}_{x}\big[f(Z(t))\big] + \liminf_{t\rightarrow \infty} \frac{1}{t} \mathbb{E}_{x}\Big[\int_{0}^{t}h(Z(u))\, \mathrm{d}u + \sum_{0\leq u\leq t}K(\Delta Y(u))+kY(t)+\ell Y^c(t)\Big] \geq \underaccent{\bar}{\nu}.
\]
Then, it follows from \eqref{eq:AC} that
\begin{equation}
\liminf_{t\rightarrow\infty}\frac{1}{t}\mathbb{E}_{x}\big[f(Z(t))\big]+
\operatorname{AC}(x,Y) \geq \underaccent{\bar}{\nu}.
\label{eq:AC-bound}
\end{equation}
By \eqref{eq:AC-bound}, $\operatorname{AC}(x,Y) \geq \underaccent{\bar}{\nu}$ holds when
\begin{equation}
\liminf_{t\rightarrow \infty}\frac{1}{t}\mathbb{E}_{x}\big[f(Z(t))\big] \leq 0.
\label{eq:ExfZ-less-zero}
\end{equation}
Otherwise, there exists $c>0$ such that
\begin{equation}
\liminf_{t\rightarrow \infty}\frac{1}{t} \mathbb{E}_{x} \big[f(Z(t))\big] > c.
\label{eq:Exfc}
\end{equation}
We next show that $\operatorname{AC}(x,Y)=\infty$ if inequality \eqref{eq:Exfc} holds. Hence, $\operatorname{AC}(x,Y) \geq \underaccent{\bar}{\nu}$ must be true.
	
It follows from \eqref{eq:limsup-fz} and \eqref{eq:Exfc} that
\[
\liminf_{t\rightarrow \infty}\frac{1}{t}\mathbb{E}_{x}\big[f(Z(t)) \cdot 1_{(-\infty, 0)}(Z(t))\big] > c,
\]
and thus
\[
\mathbb{E}_{x}\big[f(Z(t)) \cdot 1_{(-\infty, 0)}(Z(t))\big] > \frac{ct}{2}
\]
for $t$ sufficiently large. By \eqref{eq:fprime-n}, there is some $c_{0}>0$ such that $|f(z)|\leq a_{0}|z|+c_{0}$ for $z<0$. Then, 
\begin{equation}
\mathbb{E}_{x}[|Z(t)|]>\frac{ct-2c_{0}}{2a_{0}}
\label{eq:ExZ}
\end{equation}
for $t$ sufficiently large. Since $h$ is convex with $h'(z)>0$ for $z>0$ and $h'(z)<0$ for $z<0$, we can find $c_{1}, c_{2}>0$ such that $h(z)\geq c_{1}|z|-c_{2}$ for all $z\in\mathbb{R}$. Therefore,
\[
\limsup_{t\rightarrow\infty}\frac{1}{t}\mathbb{E}_{x}\Big[ \int_{0} ^{t}h(Z(u))\,\mathrm{d}u\Big] \geq \limsup_{t\rightarrow\infty} \frac{c_{1}}{t}\mathbb{E}_{x}\Big[ \int_{0}^{t}|Z(u)|\,\mathrm{d}u\Big] -c_{2},
\]
where, by \eqref{eq:ExZ}, the right side must be infinite. Hence, we must have $\operatorname{AC}(x,Y)=\infty$. 
\end{proof}

\begin{remark}
The boundedness conditions \eqref{eq:ExfZ2} and \eqref{eq:limsup-fz} are essential to prove Proposition~\ref{prop:lower-bound} by It\^{o}'s formula. More specifically, condition \eqref{eq:ExfZ2} ensures that \eqref{eq:AC-bound} holds, condition \eqref{eq:limsup-fz} ensures that \eqref{eq:ExfZ-less-zero} holds as long as the long-run average cost is finite, and the lower bound result follows from these two inequalities. Since conditions \eqref{eq:ExfZ2} and \eqref{eq:limsup-fz} do not hold for all admissible policies, Theorem~\ref{thm:dense} is the critical tool for establishing a lower bound for all of them. In the Brownian model studied by \citet{HarrisonETAL83}, \citet{OrmeciETAL08}, and \citet{DaiYao13a,DaiYao13b}, inventory is allowed to be adjusted both upwards and downwards. The optimal policy in that setting is a control band policy under which the inventory level is confined within a finite interval. Because the relative value function under a control band policy is Lipschitz continuous, these authors imposed a Lipschitz assumption on $f$ in their verification theorems. This assumption ensures that condition \eqref{eq:ExfZ2} holds for all admissible policies (which yields \eqref{eq:AC-bound}) and that condition \eqref{eq:ExfZ-less-zero} holds when the long-run average cost is finite, so one can obtain a lower bound for all admissible policies immediately. In our Brownian model, however, only upward adjustments are allowed and the optimal policy is an $(s,S)$ policy whose relative value function is not Lipschitz continuous (see Remark~\ref{rmk:Lipschitz} in \S\ref{sec:proof-1}). Without the Lipschitz assumption, conditions \eqref{eq:ExfZ2} and \eqref{eq:ExfZ-less-zero} may no longer hold for a general admissible policy, even if we assume the associated long-run average cost to be finite. In this case, \citet{WuChao14} and \citet{YaoETAL15} restricted their scope to the subset of policies that satisfy \eqref{eq:ExfZ2} and \eqref{eq:ExfZ-less-zero}. Their lower bounds are established within this subset, and consequently, their proposed policies are proved optimal within the same subset. Theorem~\ref{thm:dense} in the present paper enables us to establish a lower bound for all admissible policies. We can thus prove the proposed policy to be globally optimal.
\end{remark}

\section{Relative value function and long-run average cost}
\label{sec:sS}

In order to prove the proposed policy to be optimal, let us first analyze the long-run average cost under an arbitrary $(s,S)$ policy. An important notion for the analysis is the relative value function under the $(s,S)$ policy. The relative value function and the associated long-run average cost jointly satisfy an ordinary differential equation with some boundary conditions.

\begin{proposition}
\label{prop:sS}
Assume that $h$ satisfies \textnormal{(\ref{itm:H1})--(\ref{itm:H5})}. For any pair $ (s,S) $ with $ s \leq S $, there exists a positive number $\nu$ and a twice continuously differentiable function $V:\mathbb{R}\rightarrow \mathbb{R}$ that jointly satisfy
\begin{equation}
\Gamma V(z)+h(z) = \nu\quad\mbox{for }z\in\mathbb{R}
\label{eq:gamma-V}
\end{equation}
with boundary conditions
\begin{equation}
\begin{cases}
V(S)-V(s)  = -K(S-s)-k(S-s) & \mbox{if }s<S,\\
V'(s)  = -(k + \ell) & \mbox{if }s=S,
\end{cases}
\label{eq:V-s-S}
\end{equation}
and
\begin{equation}
\lim_{z\rightarrow \infty} \mathrm{e}^{-\alpha z}V'(z) = 0 \quad\mbox{for $ \alpha>0 $.}
\label{eq:V-prime-limit}
\end{equation}
Specifically, the solution to \eqref{eq:gamma-V}--\eqref{eq:V-prime-limit} is $\nu = \gamma(s,S)$, where $ \gamma(s,S) $ is given by \eqref{eq:gamma}, and
\begin{equation}
V(z)= -\frac{(z-s)\nu}{\mu}+\frac{\lambda}{\mu}\int_{s}^{z}\int _{0}^{\infty}h(u+y)\mathrm{e}^{-\lambda u}\,\mathrm{d}u\,\mathrm{d}y,
\label{eq:V}
\end{equation}
where $V$ is unique up to addition by a constant. Assume that $ K $ satisfies \textnormal{(\ref{itm:S3})} if $ s = S $. Then, $\operatorname{AC} (x,U(s,S))=\gamma(s,S)$, i.e., $\gamma(s,S)$ is the long-run average cost under the $(s,S)$ policy.
\end{proposition}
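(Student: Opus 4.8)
The statement comprises two parts: the existence, explicit form, and uniqueness (up to an additive constant) of the pair $(\nu, V)$ solving \eqref{eq:gamma-V}--\eqref{eq:V-prime-limit}, and the identity $\operatorname{AC}(x, U(s,S)) = \gamma(s,S)$. For the first part, the plan is to treat \eqref{eq:gamma-V} as a first-order linear equation in $W = V'$, namely $\tfrac{1}{2}\sigma^{2}W' - \mu W = \nu - h$. Multiplying by the integrating factor $\mathrm{e}^{-\lambda z}$ and integrating from $z$ to $\infty$ — the integral converges because $h$ is polynomially bounded by (\ref{itm:H5}) — yields the subexponential solution $W(z) = -\nu/\mu + (\lambda/\mu)\int_{0}^{\infty} h(z+u)\mathrm{e}^{-\lambda u}\,\mathrm{d}u$; every other solution differs from this one by a multiple of $\mathrm{e}^{\lambda z}$, which is incompatible with \eqref{eq:V-prime-limit}, so $W$ — hence $V$ up to an additive constant — is unique. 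Integrating $W$ from $s$ produces $V$ in the form \eqref{eq:V} with $\nu$ still free; substituting this $V$ into the remaining condition in \eqref{eq:V-s-S} (the finite-difference condition when $s < S$, or $V'(s) = -(k+\ell)$ when $s = S$) gives, after rearrangement, $\nu = \gamma(s,S)$ as in \eqref{eq:gamma}. Positivity of $\nu$ follows because (\ref{itm:H1}) and (\ref{itm:H4}) force $h \ge 0$ with $h(z) > 0$ for $z \neq 0$, so the holding-cost integral is strictly positive while $k, \ell, K \ge 0$. Differentiating the integral representation of $V'$ once more gives $V''(z) = (\lambda^{2}/\mu)\int_{0}^{\infty} h(z+u)\mathrm{e}^{-\lambda u}\,\mathrm{d}u - (\lambda/\mu)h(z)$, which is continuous because $h$ is (being convex); hence $V \in C^{2}(\mathbb{R})$, the kink of $h$ at $z = 0$ requiring only continuity of $h$ there, not differentiability.

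For the second part, the plan is to run the It\^{o} argument of Proposition~\ref{prop:lower-bound} along the path generated by $U(s,S)$, now with equalities throughout. Because $\Gamma V + h = \nu$ identically, the drift contributes $\nu t - \int_{0}^{t} h(Z(u))\,\mathrm{d}u$; the jump sum $\sum_{0\le u\le t}\Delta V(Z(u))$ collapses, via the first line of \eqref{eq:V-s-S} applied at each jump from $s$ to $S$, to $-\sum_{0\le u\le t}\bigl(K(\Delta Y(u)) + k\,\Delta Y(u)\bigr)$ up to a bounded correction at time $0$; and $\int_{0}^{t} V'(Z(u))\,\mathrm{d}Y^{c}(u) = -(k+\ell)Y^{c}(t)$, since $Y^{c}$ increases only at level $s$, where $V'(s) = -(k+\ell)$ by the second line of \eqref{eq:V-s-S}. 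Collecting terms and using $Y(t) = \sum_{0\le u\le t}\Delta Y(u) + Y^{c}(t)$ gives
\[
V(Z(t)) + \int_{0}^{t} h(Z(u))\,\mathrm{d}u + \sum_{0\le u\le t} K(\Delta Y(u)) + kY(t) + \ell Y^{c}(t) = V(x) + \nu t + \sigma\int_{0}^{t} V'(Z(u))\,\mathrm{d}B(u) + O(1).
\]
Since $U(s,S)\in\mathcal{U}_{m}\subset\bar{\mathcal{U}}$ for any integer $m\ge S$ and $|V'|$ is polynomially bounded, Lemma~\ref{lem:Exf} applies: \eqref{eq:ExfZ2} makes the stochastic integral a mean-zero martingale, so taking $\mathbb{E}_{x}$, dividing by $t$, and letting $t\to\infty$ yields $\operatorname{AC}(x,U(s,S)) = \nu - \lim_{t\to\infty} t^{-1}\mathbb{E}_{x}[V(Z(t))]$. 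This remaining limit is zero: on $\{Z(t)\ge 0\}$ it is exactly \eqref{eq:limsup-fz} of Lemma~\ref{lem:Exf} with $f = V$, and on $\{Z(t) < 0\}$ one uses that $Z(t)\ge s$ for every $t$ under $U(s,S)$, so that $V(Z(t))$ lies in the bounded set $V([s,0])$ there. Hence $\operatorname{AC}(x,U(s,S)) = \gamma(s,S)$ for both $s < S$ and $s = S$. (Alternatively, for $s < S$ the inventory process is regenerative and the identity follows from the renewal--reward theorem as in Remark~\ref{rmk:sS}, while for $s = S$ it follows from the ergodic theorem for the reflected Brownian motion $Z$, whose stationary law is exponential with rate $\lambda$.)

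I expect the main obstacle to be showing that the boundary term $t^{-1}\mathbb{E}_{x}[V(Z(t))]$ vanishes in the limit — precisely the estimate that fails for general admissible policies and the reason Lemma~\ref{lem:Exf} (and, behind it, the pathwise domination of $Z$ by the reflected process of Lemma~\ref{lem:Z-dominance} together with the tail formula \eqref{eq:psi}) is invoked. What makes it tractable here is that under an $(s,S)$ policy the inventory never drops below $s$, so only the upper tail of $Z(t)$ needs control, and the polynomial growth of $V'$ against the Gaussian-type tail of $Z(t)$ makes that routine. A secondary point that needs care is the $C^{2}$-regularity of $V$ at the kink $z = 0$ of $h$, handled as above.
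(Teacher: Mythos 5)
Your proposal is correct and follows essentially the same route as the paper: solving the first-order linear ODE for $V'$ under the subexponential growth condition \eqref{eq:V-prime-limit} to get the explicit form and uniqueness, pinning down $\nu=\gamma(s,S)$ via \eqref{eq:V-s-S}, and then running It\^{o}'s formula along $U(s,S)\in\bar{\mathcal{U}}$ with Lemma~\ref{lem:Exf} killing the stochastic integral and the boundary term $t^{-1}\mathbb{E}_{x}[V(Z(t))]$. Your added checks (positivity of $\nu$, $C^{2}$-regularity at the kink of $h$, and the renewal--reward alternative) are sound elaborations rather than a different argument.
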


\begin{remark}
\label{rmk:relative}
For $z \geq s$, $V(z)$ can be interpreted as the cost disadvantage of inventory level $z$ relative to the reorder level $s$. Under the $(s,S)$ policy, $T(s)$ defined by \eqref{eq:tau} can be interpreted as the first ordering time, given that $ Z(0-) = z $, and $H_{z}(s)$ is the expected holding and shortage cost during $[0, T(s)]$. Following the arguments in Remark~\ref{rmk:sS}, we have
\[
\mathbb{E}_{z}[T(s)]=\frac{z-s}{\mu} \quad\mbox{ and }\quad H_{z}(s) = \frac{\lambda}{\mu}\int_{s}^{z} \int_{0}^{\infty}h(u+y)\mathrm{e}^{-\lambda u}\,\mathrm{d}u\,\mathrm{d}y.
\]
By \eqref{eq:V}, $V(z)$ can be decomposed into
\[
V(z) = H_{z}(s) -\nu\cdot\mathbb{E}_{z}[T(s)].
\]
In this equation, $H_{z}(s)$ is the cost disadvantage of a system starting from time zero with initial level $ Z(0-) = z $ compared with a system starting from time $T(s)$ with initial level $Z(T(s)-)=s$, while $\nu \cdot \mathbb{E}_{z} [T(s)]$ represents the cost disadvantage of a system starting from time zero compared with the delayed system starting from time $T(s)$. As the difference between these two costs, $V(z)$ represents the relative cost disadvantage of inventory level $z$ compared with the reorder level $s$.
\end{remark}

\begin{proof}[Proof of Proposition~\ref{prop:sS}]
We obtain the explicit solution $ (\nu, V) $ to the boundary value problem \eqref{eq:gamma-V}--\eqref{eq:V-prime-limit} as follows. If such a solution exists, write $g(z) = V^{\prime}(z)$ for $z\in\mathbb{R}$. By \eqref{eq:gamma-V} and \eqref{eq:V-prime-limit}, $g$ satisfies the following linear first-order ordinary differential equation,
\[
g^{\prime}(z)-\lambda g(z)=-\frac{\lambda h(z)}{\mu} + \frac{\lambda\nu}{\mu}
\]
with boundary condition
\[
\lim_{z\rightarrow\infty}\mathrm{e}^{-\alpha z}g(z)=0\quad\text{for }\alpha>0.
\]
Since $h$ is polynomially bounded, for each $ \nu \in \mathbb{R} $, the above equation has a unique solution
\[
g(z)=-\frac{\nu}{\mu}+\frac{\lambda}{\mu}\int_{0}^{\infty} h(y+z)\mathrm{e}^{-\lambda y}\,\mathrm{d}y,
\]
which yields \eqref{eq:V}. By the boundary condition \eqref{eq:V-s-S}, we obtain $\nu = \gamma(s,S)$.
	
It remains to show $\operatorname{AC}(x,U(s,S))=\gamma(s,S)$. By \eqref{eq:Z}, \eqref{eq:continuous-part}, \eqref{eq:gamma-V}, and It\^{o}'s formula,
\begin{align}
V(Z(t))& = V(Z(0)) + \nu t - \int_{0}^{t} h(Z(u))\,\mathrm{d}u + \sigma\int_{0}^{t} g(Z(u))\,\mathrm{d}B(u)\nonumber\\
&\quad + \int_{0}^{t}g(Z(u))\,\mathrm{d}Y^{c} (u)+\sum_{0< u\leq t} \Delta V(Z(u)).
\label{eq:Ito-V}
\end{align}
Under the $(s,S)$ policy with $s<S$, it follows from \eqref{eq:V-s-S} that $\Delta V(Z(u)) = -K(S-s)-k(S-s)$ whenever $\Delta Z(u)>0$ and $u>0$. Since $Y^{c}(t)=0$ for $t\geq0$, equation \eqref{eq:Ito-V} turns out to be
\[
V(Z(t))=V(Z(0)) + \nu t - \int_{0}^{t} h(Z(u))\,\mathrm{d}u+\sigma\int_{0}^{t} g(Z(u))\,\mathrm{d}B(u)-K(S-s)\tilde{\mathcal{J}}(t)-k(Y(t)-Y(0)),
\]
where $\tilde{\mathcal{J}}(t)$ is the cardinality of $\{u\in(0,t]:\Delta Y(u)>0\}$. By \eqref{eq:ExfZ2},
\begin{equation}
\mathbb{E}_{x}[V(Z(t))] = \mathbb{E}_{x}[V(Z(0))+kY(0)]+\nu t-\mathbb{E}_{x}\Big[ \int_{0}^{t}h(Z(u))\,\mathrm{d}u + K(S-s)\tilde{\mathcal{J}}(t)+kY(t)\Big].
\label{eq:VZ-sS}
\end{equation}
When $s=S$, by Lemma~\ref{lem:Skorokhod-Map}, both $Y$ and $Z$ have continuous sample paths. By \eqref{eq:V-s-S} and \eqref{eq:Ito-V},
\[
V(Z(t)) = V(Z(0))+\nu t - \int_{0}^{t} h(Z(u)) \,\mathrm{d}u + \sigma\int_{0}^{t} g(Z(u))\,\mathrm{d}B(u) - (k+\ell)Y^c(t).
\]
Since $ Y^{c}(t) = Y(t) - Y(0) $, taking expectation on both sides, we obtain
\begin{equation}
\mathbb{E}_{x}[V(Z(t))] = \mathbb{E}_{x}[V(Z(0)) + kY(0)] + \nu t - \mathbb{E}_{x}\Big[ \int_{0}^{t}h(Z(u))\,\mathrm{d}u + kY(t) + \ell Y^{c}(t)\Big].
\label{eq:VZ-s}
\end{equation}
	
Under the $(s,S)$ policy with $s\leq S$, $Y(0)\leq |S-x|$ and $x\wedge S\leq Z(0)\leq x\vee S$. It follows that
\[
\lim_{t\rightarrow\infty}\frac{1}{t}\mathbb{E}_{x}[V(Z(0))+kY(0)]=0.
\]
Because $|V(Z(t))| \leq |V(Z(t)) \cdot 1_{[0,\infty)}(Z(t))| + \max\{|V(z)|:(s\wedge0) \leq z\leq 0\}$, we obtain
\[
\lim_{t\rightarrow\infty}\frac{1}{t}\mathbb{E}_{x}[V(Z(t))]=0
\]
by \eqref{eq:limsup-fz}. Then, it follows from \eqref{eq:AC} and \eqref{eq:VZ-sS}--\eqref{eq:VZ-s} that $\operatorname{AC}(x,U(s,S))=\gamma(s,S)$. 
\end{proof}

\begin{remark}
\label{rmk:smooth-pasting}
When the setup cost is constant for any order quantity, the optimal reorder and order-up-to levels can be obtained by adding a smooth pasting condition \begin{equation}
V^{\prime}(s^{\star}) = V^{\prime}(S^\star) = -k,
\label{eq:smooth-pasting}
\end{equation}
where, with slight abuse of notation, $V$ should be understood as the relative value function under $U(s^{\star},S^{\star})$; see \citet{Bather66}, \citet{Taksar85}, and \citet{Sulem86} for the interpretation of the smooth pasting condition. This condition, together with \eqref{eq:gamma-V}--\eqref{eq:V-prime-limit}, defines a free boundary problem by which $(s^{\star},S^{\star})$ can be uniquely determined. In our Brownian model, however, the general setup cost function has imposed a quantity constraint on each setup cost value. With these constraints, the smoothness condition \eqref{eq:smooth-pasting} may no longer hold at the free boundary. In other words, the smooth pasting method cannot be used for our problem.
\end{remark}

\section{Optimal ordering policy}
\label{sec:proof-1}

The optimality result is proved in this section. We first confine ordering policies to the $ (s,S) $ type, proving the existence of the optimal $ (s,S) $ policy. Then, we show that the relative value function associated with the optimal $ (s,S) $ policy and the resulting long-run average cost jointly satisfy the conditions in the verification theorem, thus proving Theorem~\ref{thm:main-results} by the lower bound approach.

We establish a series of lemmas to prove Proposition~\ref{prop:z-s-S-star} and Theorem~\ref{thm:main-results}. In particular, the following function $ g_{0} : \mathbb{R} \rightarrow \mathbb{R}_{+} $, defined by
\begin{equation}
g_{0}(z) = \frac{\lambda}{\mu}\int_{0}^{\infty} h(y+z)\mathrm{e}^{-\lambda y}\,\mathrm{d}y,
\label{eq:g0}
\end{equation}
is frequently used in the analysis. The first derivative of $ g_{0} $ is
\begin{equation}
g_{0}^{\prime}(z)=\frac{\lambda}{\mu}\Big(\lambda\int_{0}^{\infty} h(y+z)\mathrm{e}^{-\lambda y}\,\mathrm{d}y-h(z)\Big),
\label{eq:g0-prime}
\end{equation}
and $ g_{0} $ is a solution to the linear first-order ordinary differential equation
\begin{equation}
\frac{1}{2}\sigma^{2} g_{0}^{\prime}(z) - \mu g_{0}(z) + h(z) = 0.
\label{eq:g0-equation}
\end{equation}
Using the derivative, we specify the monotone intervals of $g_{0}$ in the following lemma.
\begin{lemma}
\label{lem:g0}
Assume that $h$ satisfies \textnormal{(\ref{itm:H1})--(\ref{itm:H5})}. Then,
\begin{equation}
\lim_{z\rightarrow \pm \infty}g_{0}(z) = \infty
\label{eq:g0-limit}
\end{equation}
and
\begin{equation}
\begin{cases}
g'_{0}(z)<0 & \mbox{for }z<z^{\star},\\
g'_{0}(z)=0 & \mbox{for }z=z^{\star},\\
g'_{0}(z)>0 & \mbox{for }z>z^{\star},
\end{cases}
\label{eq:z-star}
\end{equation}
where $z^{\star}$ is uniquely determined by \eqref{eq:optimal-barrier} and is less than zero.
\end{lemma}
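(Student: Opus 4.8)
The plan is to analyze $g_0$ through its derivative $g_0'$ in \eqref{eq:g0-prime}, rewriting $g_0'(z) = (\lambda/\mu) g_1(z)$ where $g_1(z) = \lambda \int_0^\infty h(y+z)\mathrm{e}^{-\lambda y}\,\mathrm{d}y - h(z)$, so that the sign of $g_0'$ is governed by $g_1$, and showing $g_1$ has a unique zero $z^\star < 0$ at which $g_1$ crosses from negative to positive. First I would substitute $u = y+z$ to write $\lambda \int_0^\infty h(y+z)\mathrm{e}^{-\lambda y}\,\mathrm{d}y = \lambda \mathrm{e}^{\lambda z}\int_z^\infty h(u)\mathrm{e}^{-\lambda u}\,\mathrm{d}u$, which is finite for every $z$ by the polynomial bound \eqref{itm:H5}; then \eqref{eq:g0-prime} and \eqref{eq:g0-equation} together (or a direct differentiation under the integral sign, justified by the polynomial growth and dominated convergence) give that $g_1$ is continuously differentiable with $g_1'(z) = \lambda\big(g_1(z)\big) - h'(z) + \cdots$; more cleanly, differentiating \eqref{eq:g0-equation} yields $\tfrac12\sigma^2 g_0''(z) - \mu g_0'(z) + h'(z) = 0$ wherever $h'$ exists, i.e. $g_1'(z) = \lambda g_1(z) + (2/\sigma^2)\big(h'(z) - \text{something}\big)$; I will instead argue monotonicity of $g_1$ more directly. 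Note $g_1(z) = \lambda\int_0^\infty \big(h(y+z)-h(z)\big)\mathrm{e}^{-\lambda y}\,\mathrm{d}y$ since $\lambda\int_0^\infty \mathrm{e}^{-\lambda y}\,\mathrm{d}y = 1$; by convexity of $h$ (\ref{itm:H2}), for each fixed $y>0$ the map $z \mapsto h(y+z)-h(z)$ is nondecreasing (difference of a convex function over an interval of fixed length), and it is strictly increasing on any interval where $h$ is not affine, which by \eqref{itm:H4} holds everywhere except we must be careful at $0$; hence $g_1$ is nondecreasing, and in fact strictly increasing, on $\mathbb{R}$.

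Next I would pin down the sign of $g_1$ at the two ends and at $0$. As $z\to+\infty$, $h(y+z)-h(z)\ge 0$ with $h'$ eventually strictly positive, so $g_1(z)\to\infty$ (using $h'(z)\to+\infty$ or at least $h(y+z)-h(z)\ge y\,h'(z^+)$ which blows up); as $z\to-\infty$, $h(y+z)-h(z)\le 0$ and $h(z)\to\infty$ so $g_1(z)\to-\infty$. At $z=0$: $g_1(0) = \lambda\int_0^\infty h(y)\mathrm{e}^{-\lambda y}\,\mathrm{d}y - h(0) = \lambda\int_0^\infty h(y)\mathrm{e}^{-\lambda y}\,\mathrm{d}y > 0$ because $h(0)=0$ by \eqref{itm:H1} and $h(y)>0$ for $y>0$ by \eqref{itm:H4}. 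Combined with strict monotonicity and the intermediate value theorem, $g_1$ has a unique zero $z^\star$, and $g_1(0)>0$ forces $z^\star<0$; moreover $g_1<0$ on $(-\infty,z^\star)$ and $g_1>0$ on $(z^\star,\infty)$, which is exactly \eqref{eq:z-star} after multiplying by $\lambda/\mu>0$. The equation $g_1(z^\star)=0$ is precisely \eqref{eq:optimal-barrier}, so this $z^\star$ coincides with the one in the statement.

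Finally, for the limits \eqref{eq:g0-limit}: as $z\to+\infty$, $g_0(z) = (\lambda/\mu)\int_0^\infty h(y+z)\mathrm{e}^{-\lambda y}\,\mathrm{d}y \ge (\lambda/\mu) h(z)\int_0^\infty \mathrm{e}^{-\lambda y}\,\mathrm{d}y = h(z)/\mu \to\infty$ since $h$ is increasing to $\infty$ on $(0,\infty)$ (unbounded because $h'$ stays positive and $h$ convex); as $z\to-\infty$, for $z<0$ and $y\in(0,|z|/2)$ we have $y+z<z/2<0$ so $h(y+z)\ge h(z/2)\to\infty$, giving $g_0(z)\ge (\lambda/\mu)h(z/2)\int_0^{|z|/2}\mathrm{e}^{-\lambda y}\,\mathrm{d}y\to\infty$. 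Alternatively, $g_0\to\infty$ at both ends follows immediately once we know $g_0$ is strictly decreasing then strictly increasing (from \eqref{eq:z-star}) together with $g_0\ge h/\mu$ on $(0,\infty)$ and the lower bound just given for negative $z$.

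The main obstacle I anticipate is the behavior near $z=0$, where $h$ need only be continuous (not differentiable) by \eqref{itm:H3}: I must take care that the "strictly increasing" claim for $z\mapsto h(y+z)-h(z)$ and the differentiation/convexity arguments are valid across the kink at $0$. Using the integral representation $g_1(z)=\lambda\int_0^\infty(h(y+z)-h(z))\mathrm{e}^{-\lambda y}\,\mathrm{d}y$ and pure convexity (slopes of chords are monotone) sidesteps differentiability entirely, so that is the route I would take; the remaining care is simply to confirm that convexity plus \eqref{itm:H4} makes the monotonicity strict and that the tail estimates using \eqref{itm:H5} justify all the interchanges of limit and integral.
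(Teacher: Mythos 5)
Your overall strategy---reducing everything to the sign of $g_1(z)=\lambda\int_0^\infty h(y+z)\mathrm{e}^{-\lambda y}\,\mathrm{d}y-h(z)=\lambda\int_0^\infty\big(h(y+z)-h(z)\big)\mathrm{e}^{-\lambda y}\,\mathrm{d}y$ and exploiting the convexity-induced monotonicity of $z\mapsto h(y+z)-h(z)$---is a legitimate and arguably cleaner alternative to the paper's argument, which instead establishes three facts: $g_0'(z)>0$ for $z\geq0$ (from \eqref{eq:g0-prime-2} and (\ref{itm:H4})), $g_0''(z)>0$ for $z<0$ (via integration by parts and convexity), and $\limsup_{z\to-\infty}g_0'(z)<0$ (via a tail estimate using (\ref{itm:H5})). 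Your chord-slope route avoids the second derivative entirely and, as you note, sidesteps the nondifferentiability of $h$ at $0$. The computation $g_1(0)=\lambda\int_0^\infty h(y)\mathrm{e}^{-\lambda y}\,\mathrm{d}y>0$ and the two estimates giving \eqref{eq:g0-limit} are correct.

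However, two of your intermediate claims are false under (\ref{itm:H1})--(\ref{itm:H5}) and need to be weakened. First, $g_1$ is \emph{not} strictly increasing on all of $\mathbb{R}$: for the piecewise linear cost $h(z)=\beta_1 z^+ +\beta_2 z^-$, one has $h(y+z)-h(z)=\beta_1 y$ for all $z\geq0$ and $y>0$, so $g_1\equiv\beta_1/\lambda$ on $[0,\infty)$. Condition (\ref{itm:H4}) does not prevent $h$ from being affine on each half-line, so ``convexity plus (\ref{itm:H4})'' does not make the monotonicity strict everywhere. What you can and do need to prove is strictness for $z_1<z_2$ with $z_1<0$: for $y>-z_1$ the interval $[z_2,z_2+y]$ covers strictly more of the region where $h'>0$ than $[z_1,z_1+y]$ does, so $h(y+z_1)-h(z_1)<h(y+z_2)-h(z_2)$ on a set of $y$ of positive measure. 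Combined with $g_1$ nondecreasing and $g_1(0)>0$, this still yields a unique zero $z^\star<0$ and the sign pattern \eqref{eq:z-star}. Second, and more substantively, the claim $g_1(z)\to-\infty$ as $z\to-\infty$ is false (in the piecewise linear example $g_1(z)\to-\beta_2/\lambda$), and the justification offered---``$h(y+z)-h(z)\leq0$ and $h(z)\to\infty$''---is invalid: the inequality $h(y+z)\leq h(z)$ only holds for $y\leq-z$, and for $y>-z$ the integrand is positive and can be large. What you actually need is only that $g_1$ is eventually negative, and establishing this genuinely requires the tail estimate the paper performs: split the integral at $y=z_0-z$, bound the near part by $-c_0$ times $\int_0^{z_0-z}y\lambda\mathrm{e}^{-\lambda y}\,\mathrm{d}y$ using $h'<-c_0$ on $(-\infty,z_0)$, and show the far part vanishes using the polynomial bound (\ref{itm:H5}). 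As written, your proposal asserts the conclusion without a valid argument for this step, so this is the one place where the proof has a real gap rather than a fixable overstatement. (The symmetric claim $g_1(z)\to+\infty$ as $z\to+\infty$ is likewise false in the piecewise linear case, but it is harmless since $g_1(z)\geq g_1(0)>0$ suffices there.)
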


\begin{remark}
\label{rmk:Lipschitz}
The relative value function $V$ given by \eqref{eq:V} satisfies $ V'(z) = -\nu/\mu + g_{0}(z) $, so by \eqref{eq:g0-limit}, $V'$ is unbounded. This implies that the relative value function for an $(s,S)$ policy is not Lipschitz continuous.
\end{remark}

If each order quantity is fixed at $ \xi > 0 $, the optimal reorder and order-up-to levels will be determined by minimizing the holding and shortage cost. We use $ \tilde{s}(\xi) $ to denote the optimal reorder level, and the corresponding order-up-to level is $ \tilde{S}(\xi) = \tilde{s}(\xi) + \xi $. If a base stock policy is used, we use $ \tilde{s}(0) $ to denote the optimal base stock level, in which case $ \tilde{S}(0) = \tilde{s}(0) $. For $ \xi \geq 0 $, $ \tilde{s}(\xi) $ is specified in Lemmas~\ref{lem:sQ}--\ref{lem:theta}.

\begin{lemma}
\label{lem:sQ}
Assume that $h$ satisfies \textnormal{(\ref{itm:H1})--(\ref{itm:H5})}. Then for each $ \xi > 0 $, there exists a unique $(\tilde{s}(\xi),\tilde{S}(\xi)) \in \mathbb{R}^{2} $ such that 
\begin{equation}
g_{0}(\tilde{s}(\xi)) = g_{0}(\tilde{S}(\xi)) \quad \mbox{and} \quad \tilde{S}(\xi) = \tilde{s}(\xi) + \xi.
\label{eq:g0-equal}
\end{equation}
This solution satisfies
\begin{equation}
\tilde{s}(\xi) < z^{\star} < \tilde{S}(\xi) \quad\mbox{for $ \xi > 0 $,}
\label{eq:sQ}
\end{equation}
\begin{equation}
\lim_{\xi \rightarrow \infty} \tilde{s}(\xi) = -\infty \quad \mbox{and} \quad \lim_{\xi \rightarrow \infty} \tilde{S}(\xi) = \infty.
\label{eq:s-tilde-infinity}
\end{equation}
Moreover, both $ \tilde{s} $ and $ \tilde{S} $ are differentiable on $ (0,\infty) $, with 
\begin{equation}
\tilde{s}^{\prime}(\xi) < 0 \quad \mbox{and} \quad \tilde{S}^{\prime}(\xi) > 0 \quad \mbox{for $ \xi > 0 $.}
\label{eq:s-tilde-prime}
\end{equation}
\end{lemma}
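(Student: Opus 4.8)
\textbf{Proof proposal for Lemma~\ref{lem:sQ}.}

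The plan is to study the auxiliary function $\rho_{\xi}(s) = g_{0}(s+\xi) - g_{0}(s)$ for fixed $\xi > 0$ and locate its zero. First I would record the shape information on $g_{0}$ from Lemma~\ref{lem:g0}: $g_{0}$ is strictly decreasing on $(-\infty, z^{\star})$, strictly increasing on $(z^{\star},\infty)$, and blows up to $+\infty$ at both ends. Consequently, for the equation $g_{0}(\tilde{s}(\xi)) = g_{0}(\tilde{S}(\xi))$ with $\tilde{S}(\xi) = \tilde{s}(\xi)+\xi > \tilde{s}(\xi)$ to have a solution, we must have $\tilde{s}(\xi) < z^{\star} < \tilde{S}(\xi)$, since a level set of $g_{0}$ meeting two distinct points must straddle the minimizer $z^{\star}$; this already gives \eqref{eq:sQ} once existence and uniqueness are in hand. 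For existence, evaluate $\rho_{\xi}$ at the two natural endpoints of the relevant range: at $s = z^{\star}-\xi$ we have $\rho_{\xi}(z^{\star}-\xi) = g_{0}(z^{\star}) - g_{0}(z^{\star}-\xi) < 0$ because $z^{\star}$ is the strict global minimum; at $s = z^{\star}$ we have $\rho_{\xi}(z^{\star}) = g_{0}(z^{\star}+\xi) - g_{0}(z^{\star}) > 0$ for the same reason. By continuity of $g_{0}$ (it is $C^{1}$, being an integral of the locally integrable $h$), the intermediate value theorem yields a zero $\tilde{s}(\xi) \in (z^{\star}-\xi, z^{\star})$, and then $\tilde{S}(\xi) = \tilde{s}(\xi)+\xi \in (z^{\star}, z^{\star}+\xi)$, confirming \eqref{eq:sQ}.

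For uniqueness I would show $\rho_{\xi}$ is strictly increasing on the interval $(z^{\star}-\xi, z^{\star})$ where the zero lives: $\rho_{\xi}'(s) = g_{0}'(s+\xi) - g_{0}'(s)$, and for $s \in (z^{\star}-\xi, z^{\star})$ we have $s < z^{\star}$ so $g_{0}'(s) < 0$, while $s+\xi > z^{\star}$ so $g_{0}'(s+\xi) > 0$; hence $\rho_{\xi}'(s) > 0$ throughout, and the zero is unique in that interval. (A zero outside this interval is impossible: if $s \geq z^{\star}$ then both $s$ and $s+\xi$ exceed $z^{\star}$, $g_{0}$ is strictly increasing there, so $\rho_{\xi}(s) > 0$; if $s \leq z^{\star}-\xi$ then both arguments are $\leq z^{\star}$, $g_{0}$ strictly decreasing, so $\rho_{\xi}(s) < 0$.) This settles the first claim. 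For the limits \eqref{eq:s-tilde-infinity}, note $\tilde{s}(\xi) < z^{\star}$ for all $\xi$ and $g_{0}(\tilde{s}(\xi)) = g_{0}(\tilde{s}(\xi)+\xi) \geq g_{0}(z^{\star})$; I would argue $\tilde{s}(\xi)$ is monotone in $\xi$ (see below) and bounded above by $z^{\star}$, so it has a limit $L \in [-\infty, z^{\star}]$; if $L$ were finite then $g_{0}(\tilde{s}(\xi)+\xi) = g_{0}(\tilde{s}(\xi)) \to g_{0}(L) < \infty$ while $\tilde{s}(\xi)+\xi \to +\infty$ forces $g_{0}(\tilde{s}(\xi)+\xi) \to \infty$ by \eqref{eq:g0-limit}, a contradiction; hence $L = -\infty$, and symmetrically $\tilde{S}(\xi) \to \infty$.

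The differentiability and monotonicity in \eqref{eq:s-tilde-prime} I would get from the implicit function theorem applied to $\Psi(s,\xi) := g_{0}(s+\xi) - g_{0}(s) = 0$. Since $g_{0}$ is $C^{1}$, $\Psi$ is $C^{1}$, and $\partial_{s}\Psi(\tilde{s}(\xi),\xi) = g_{0}'(\tilde{S}(\xi)) - g_{0}'(\tilde{s}(\xi)) > 0$ by \eqref{eq:sQ} and \eqref{eq:z-star} (positive minus negative); hence $\tilde{s}$ is differentiable on $(0,\infty)$ with
\[
\tilde{s}'(\xi) = -\frac{\partial_{\xi}\Psi}{\partial_{s}\Psi} = -\frac{g_{0}'(\tilde{S}(\xi))}{g_{0}'(\tilde{S}(\xi)) - g_{0}'(\tilde{s}(\xi))} < 0,
\]
the sign following because the numerator $g_{0}'(\tilde{S}(\xi)) > 0$ and the denominator is positive. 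Then $\tilde{S}'(\xi) = \tilde{s}'(\xi) + 1 = -g_{0}'(\tilde{s}(\xi))/(g_{0}'(\tilde{S}(\xi)) - g_{0}'(\tilde{s}(\xi))) > 0$ since $-g_{0}'(\tilde{s}(\xi)) > 0$. This also furnishes the monotonicity of $\tilde{s}$ used in the limit argument above, so the pieces can be reordered to make that step rigorous. I do not anticipate a genuine obstacle here; the only mild subtlety is making sure $g_{0}'$ is itself continuous (so IFT in $C^{1}$ form applies cleanly) — this follows from \eqref{eq:g0-equation}, which expresses $g_{0}'$ as an affine function of the continuous quantities $g_{0}$ and $h$ away from $z=0$, and one checks continuity of $g_{0}'$ at $0$ directly from \eqref{eq:g0-prime} using continuity of $g_{0}$ and the fact that $h$, being convex, has at worst a corner (not a jump) at $0$ so the right side of \eqref{eq:g0-prime} is continuous in $z$.
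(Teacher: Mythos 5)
Your proposal is correct and follows essentially the same route as the paper: both study the auxiliary function $G(s,\xi)=g_{0}(s+\xi)-g_{0}(s)$, obtain existence and uniqueness of its zero on $(z^{\star}-\xi,z^{\star})$ from the strict monotonicity of $g_{0}$ on either side of $z^{\star}$ (Lemma~\ref{lem:g0}), deduce \eqref{eq:sQ} and \eqref{eq:s-tilde-infinity} from \eqref{eq:g0-limit}, and apply the implicit function theorem with the identical sign computations for \eqref{eq:s-tilde-prime}. Your extra care about the continuity of $g_{0}'$ at $z=0$ and the slight reordering needed for the limit argument are fine and consistent with the paper's (terser) treatment.
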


The value of $ \tilde{s}(\xi) $ is determined by \eqref{eq:g0-equal} for $ \xi > 0 $. Taking $ \tilde{s}(0) = z^{\star}$, we extend the domain of $ \tilde{s} $ to $ [0,\infty) $. For notational convenience, let us write
\begin{equation}
\tilde{\gamma} (s, \xi) = \gamma(s, s+\xi) \quad \mbox{for $ s \in \mathbb{R} $ and $ \xi \geq 0 $,} 
\label{eq:gamma-tilde-definition}
\end{equation}
which is the long-run average cost with the reorder level fixed at $ s $ and the order-up-to level fixed at $ s + \xi $. By \eqref{eq:gamma} and \eqref{eq:g0},
\begin{equation}
\tilde{\gamma} (s, \xi) = 
\begin{dcases}
k\mu + \frac{K(\xi)\mu}{\xi} + \frac{\mu}{\xi} \int_{s}^{s+\xi} g_{0}(y)\,\mathrm{d}y & \mbox{if $\xi > 0$},\\
(k + \ell)\mu + \mu g_{0}(s) &\mbox{if $\xi = 0$.}
\end{dcases}
\label{eq:gamma-tilde}
\end{equation}
For $ \xi \geq 0 $, let
\begin{equation}
\theta(\xi) = \inf \{ \tilde{\gamma}(s,\xi) : s \in \mathbb{R} \},
\label{eq:theta-definition}
\end{equation}
which is the minimum long-run average cost when the quantity of each order is fixed at $ \xi $ (a base stock policy is used if $ \xi = 0 $). The next lemma says that this minimum cost can be attained by setting the reorder level at $ \tilde{s}(\xi) $. In addition, $ \theta $ is lower semicontinuous.
\begin{lemma}
\label{lem:theta}
Assume that $ K $ satisfies \textnormal{(\ref{itm:S1})}, \textnormal{(\ref{itm:S3})}, and \textnormal{(\ref{itm:S4})}, and that $ h $ satisfies \textnormal{(\ref{itm:H1})--(\ref{itm:H5})}. Then, 
\begin{equation}
\theta(\xi) = \tilde{\gamma}(\tilde{s}(\xi),\xi) \quad \mbox{for $ \xi \geq 0 $},
\label{eq:theta}
\end{equation}
where $ \tilde{s}(0) = z^{\star} $ and $ \tilde{s}(\xi) $ is determined by \eqref{eq:g0-equal} for $ \xi > 0 $. Moreover, $ \theta $ is lower semicontinuous on $ [0,\infty) $ and satisfies \begin{equation}
\lim_{\xi \rightarrow \infty} \theta (\xi) = \infty .
\label{eq:theta-infinity}
\end{equation}
\end{lemma}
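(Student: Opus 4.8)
The plan is to establish the three assertions in order, each by reducing to a one-dimensional minimization over the reorder level~$s$. Consider first the identity $\theta(\xi)=\tilde{\gamma}(\tilde{s}(\xi),\xi)$. For $\xi=0$, formula \eqref{eq:gamma-tilde} gives $\tilde{\gamma}(s,0)=(k+\ell)\mu+\mu g_{0}(s)$, and since $g_{0}$ is continuous with $g_{0}(z)\rightarrow\infty$ as $z\rightarrow\pm\infty$ by \eqref{eq:g0-limit}, Lemma~\ref{lem:g0} (through \eqref{eq:z-star}) shows that $g_{0}$ attains its infimum uniquely at $z^{\star}=\tilde{s}(0)$, so $\theta(0)=\tilde{\gamma}(z^{\star},0)$; this remains valid when $\ell=\infty$, both sides being $\infty$. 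For $\xi>0$, the only $s$-dependent term of $\tilde{\gamma}(s,\xi)$ in \eqref{eq:gamma-tilde} is $(\mu/\xi)\int_{s}^{s+\xi}g_{0}(y)\,\mathrm{d}y$, which, by continuity of $g_{0}$, is a $C^{1}$ function of~$s$ with derivative $(\mu/\xi)(g_{0}(s+\xi)-g_{0}(s))$; it is coercive by \eqref{eq:g0-limit}, so its infimum is attained at a finite~$s$, which is then a stationary point and therefore satisfies $g_{0}(s+\xi)=g_{0}(s)$. By the uniqueness part of Lemma~\ref{lem:sQ}, the only $s$ satisfying \eqref{eq:g0-equal} is $\tilde{s}(\xi)$; hence $\tilde{s}(\xi)$ is the unique minimizer and $\theta(\xi)=\tilde{\gamma}(\tilde{s}(\xi),\xi)$.

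Next, lower semicontinuity. On $(0,\infty)$ the previous step gives $\theta(\xi)=k\mu+\mu K(\xi)/\xi+(\mu/\xi)\int_{\tilde{s}(\xi)}^{\tilde{s}(\xi)+\xi}g_{0}(y)\,\mathrm{d}y$. The last term is continuous in $\xi$ on $(0,\infty)$ because $\tilde{s}$ is continuous there by Lemma~\ref{lem:sQ} and $g_{0}$ is continuous; the map $\xi\mapsto\mu K(\xi)/\xi$ is lower semicontinuous on $(0,\infty)$ since $K$ is nonnegative and lower semicontinuous by (\ref{itm:S1}) and (\ref{itm:S4}) while $\mu/\xi$ is continuous and strictly positive (a product of a nonnegative lower semicontinuous function and a continuous positive function is lower semicontinuous). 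As a sum of a lower semicontinuous and a continuous function, $\theta$ is lower semicontinuous on $(0,\infty)$. At $\xi=0$ I would instead show right-continuity: by \eqref{eq:sQ} one has $z^{\star}-\xi<\tilde{s}(\xi)<z^{\star}$, so $\tilde{s}(\xi)\rightarrow z^{\star}$ and, by continuity of $g_{0}$, $(1/\xi)\int_{\tilde{s}(\xi)}^{\tilde{s}(\xi)+\xi}g_{0}(y)\,\mathrm{d}y\rightarrow g_{0}(z^{\star})$; combined with $\liminf_{\xi\downarrow0}K(\xi)/\xi=\ell$ from \eqref{eq:ell}, this yields $\liminf_{\xi\downarrow0}\theta(\xi)=k\mu+\mu g_{0}(z^{\star})+\mu\ell=\tilde{\gamma}(z^{\star},0)=\theta(0)$, where (\ref{itm:S3}) is what makes $\ell$, and hence $\theta(0)$, well defined. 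Thus $\theta$ is lower semicontinuous on $[0,\infty)$.

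Finally, \eqref{eq:theta-infinity}. Since $h\geq0$ by (\ref{itm:H1}), (\ref{itm:H2}), and (\ref{itm:H4}), we have $g_{0}\geq0$ from \eqref{eq:g0}, and since $K\geq0$, the identity from the first step gives $\theta(\xi)\geq k\mu+(\mu/\xi)\int_{\tilde{s}(\xi)}^{\tilde{S}(\xi)}g_{0}(y)\,\mathrm{d}y$. Fix $M>0$ and choose $R>0$ with $g_{0}(z)\geq M$ whenever $|z|\geq R$, which is possible by \eqref{eq:g0-limit}. By \eqref{eq:s-tilde-infinity}, for all sufficiently large $\xi$ we have $\tilde{s}(\xi)\leq-R$ and $\tilde{S}(\xi)\geq R$, and then
\[
\int_{\tilde{s}(\xi)}^{\tilde{S}(\xi)}g_{0}(y)\,\mathrm{d}y\;\geq\;\int_{\tilde{s}(\xi)}^{-R}g_{0}+\int_{R}^{\tilde{S}(\xi)}g_{0}\;\geq\;M\bigl(-R-\tilde{s}(\xi)\bigr)+M\bigl(\tilde{S}(\xi)-R\bigr)\;=\;M(\xi-2R),
\]
so $(1/\xi)\int_{\tilde{s}(\xi)}^{\tilde{S}(\xi)}g_{0}\geq M(1-2R/\xi)$ and hence $\liminf_{\xi\rightarrow\infty}\theta(\xi)\geq k\mu+\mu M$. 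Letting $M\rightarrow\infty$ gives \eqref{eq:theta-infinity}.

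The step I expect to require the most care is the endpoint $\xi=0$: one must verify that as the batch size shrinks the optimal reorder level $\tilde{s}(\xi)$ and the averaged holding cost converge to the base-stock quantities $z^{\star}$ and $g_{0}(z^{\star})$, and simultaneously that the averaged setup term $K(\xi)/\xi$ has lower limit exactly~$\ell$, so that the two branches of \eqref{eq:gamma-tilde} fit together; here the squeeze $\tilde{s}(\xi)\in(z^{\star}-\xi,z^{\star})$ from Lemma~\ref{lem:sQ} and the finiteness of the right derivative of~$K$ at zero in (\ref{itm:S3}) are essential. Everything else is routine once Lemmas~\ref{lem:g0} and~\ref{lem:sQ} are in hand.
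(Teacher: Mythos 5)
Your proposal is correct and follows the same overall structure as the paper's proof: identify $\tilde{s}(\xi)$ as the minimizer via the stationarity condition $g_{0}(s+\xi)=g_{0}(s)$ and the uniqueness in Lemma~\ref{lem:sQ}, split the lower semicontinuity argument into the interior points (where only the $K(\xi)/\xi$ term can fail to be continuous) and the endpoint $\xi=0$ (where the squeeze $\tilde s(\xi)\in(z^{\star}-\xi,z^{\star})$ and the definition of $\ell$ in \eqref{eq:ell} make the two branches of \eqref{eq:gamma-tilde} match), and then show $\theta(\xi)\rightarrow\infty$. Two sub-steps differ in route, both to your advantage in rigor or simplicity. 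First, to conclude that the unique stationary point is the global minimizer over $s$, you invoke coercivity of $s\mapsto\int_{s}^{s+\xi}g_{0}$ (from \eqref{eq:g0-limit}) together with uniqueness of the critical point, whereas the paper checks positivity of the second derivative at $\tilde{s}(\xi)$; your argument is self-contained, while the second-derivative test alone only certifies a local minimum and implicitly relies on the same uniqueness/coercivity facts. Second, for \eqref{eq:theta-infinity} the paper applies L'H\^{o}pital's rule and the identity $g_{0}(\tilde{s}(\xi))=g_{0}(\tilde{S}(\xi))$ to evaluate $\lim_{\xi\to\infty}\xi^{-1}\int_{\tilde{s}(\xi)}^{\tilde{S}(\xi)}g_{0}$, which requires the differentiability of $\tilde s$ from \eqref{eq:s-tilde-prime}; your direct lower bound $\int_{\tilde{s}(\xi)}^{\tilde{S}(\xi)}g_{0}\geq M(\xi-2R)$ uses only $g_{0}\geq 0$, \eqref{eq:g0-limit}, and \eqref{eq:s-tilde-infinity}, and is more elementary. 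All the facts you cite are available at this point in the paper, so the argument stands as written.
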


\begin{proof}[Proof of Proposition~\ref{prop:z-s-S-star}]
By \eqref{eq:gamma-tilde-definition} and \eqref{eq:theta-definition},
\[ 
\inf \{ \gamma(s,S) : s \leq S \} = \inf \{ \theta(\xi) : \xi \geq 0 \}.
\]
To prove \eqref{eq:s*S*}, we need to show that there exists $ \xi^{\star} \geq 0 $ such that $ \theta(\xi^{\star}) = \inf \{ \theta(\xi) : \xi \geq 0 \}$. 
	
By \eqref{eq:theta-infinity}, if $ \xi^{\star} $ exists, there must be some $ M < \infty $ such that $ \xi^{\star} \leq M$. Because $ \theta $ is lower semicontinuous on $ [0,\infty) $, by the extreme value theorem (see, e.g., Theorem~B.2 in \citealt{Puterman94}), there exists $ \hat{\xi} \in [0,M] $ such that $ \theta(\hat{\xi}) \leq \theta(\xi) $ for all $ \xi \in [0,M] $. Hence, $ \xi^{\star} = \hat{\xi} $ must be a minimizer of $ \theta $. Taking $ s^{\star} = \tilde{s}(\xi^{\star})$ and $ S^{\star} = \tilde{S}(\xi^{\star})$, we deduce that \eqref{eq:s*S*} holds.
	
Lemma~\ref{lem:g0} provides the properties of $ z^{\star} $. If $ K(0+) > 0 $, we obtain $ \theta(0) = \tilde{\gamma}(z^{\star},0) = \infty $ because $ \ell = \infty $. This implies that $ \xi^{\star} > 0 $, and by \eqref{eq:sQ}, we obtain $ s^{\star} < z^{\star} < S^{\star}$. If $ K(0+) = 0 $, it may happen that $ \xi^{\star} = 0 $, in which case $ s^{\star} = z^{\star} = S^{\star}$ since $ \tilde{s}(0) = z^{\star} $. If $ K(0+) = 0 $ and $ \xi^{\star} > 0 $, we have $ s^{\star} < z^{\star} < S^{\star}$ again by \eqref{eq:sQ}. 
\end{proof}

It remains to prove the global optimality of $ U(s^{\star},S^{\star}) $ using the verification theorem. Under this policy, the long-run average cost in \eqref{eq:gamma} and the relative value function in \eqref{eq:V} satisfy all conditions specified in Proposition~\ref{prop:lower-bound} except for \eqref{eq:fprime-n}. The relative value function should thus be modified to fulfill this condition. To this end, we establish the following lemma.

\begin{lemma}
\label{lem:bar-gamma}
Assume that $K$ satisfies \textnormal{(\ref{itm:S1})--(\ref{itm:S4})} and that $h$ satisfies \textnormal{(\ref{itm:H1})--(\ref{itm:H5})}. Then, there exists $\underaccent{\bar}{s} \in (-\infty, z^{\star}) $ such that 
\begin{equation}
\bar{\gamma}(s,\xi) \geq \gamma(s^{\star},S^{\star}) \quad\mbox{for $ s\in \mathbb{R} $ and $\xi > 0$},
\label{eq:gamma-tilde-bound}
\end{equation}
where
\begin{equation}
\bar{\gamma}(s,\xi) = k\mu +\frac{K(\xi)\mu}{\xi} + \frac{\mu}{\xi} \int_{s}^{s+\xi} g_0(y \vee \underaccent{\bar}{s})\,\mathrm{d}y.
\label{eq:gamma-bar}
\end{equation}
\end{lemma}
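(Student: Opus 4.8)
The plan is to regard $\bar\gamma$ as the long-run average cost produced by a ``flattened'' holding-cost profile, obtained by replacing $g_{0}$ with the function $y\mapsto g_{0}(y\vee\underaccent{\bar}{s})$, and to show that if the threshold $\underaccent{\bar}{s}$ is taken far enough to the left, the downward error caused by the flattening is absorbed, so that the modified relative value function still satisfies the key inequality~\eqref{eq:f-d}. Write $\gamma^{\star}=\gamma(s^{\star},S^{\star})$ and $\bar K=\sup_{\xi\ge0}K(\xi)$, which is finite by~\textnormal{(\ref{itm:S2})}, with $0\le K\le\bar K$ by~\textnormal{(\ref{itm:S1})}. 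Recall from the proof of Proposition~\ref{prop:z-s-S-star} that $\gamma^{\star}=\inf_{\xi\ge0}\theta(\xi)$, so $\theta(\xi)\ge\gamma^{\star}$ for every $\xi$, and from Lemma~\ref{lem:g0} that $g_{0}$ is continuous, strictly decreasing on $(-\infty,z^{\star})$, strictly increasing on $(z^{\star},\infty)$, with $g_{0}(z)\to\infty$ as $z\to\pm\infty$. For $\underaccent{\bar}{s}<z^{\star}$, since $g_{0}(\underaccent{\bar}{s})>g_{0}(z^{\star})$ there is a unique $\underaccent{\bar}{s}^{+}\in(z^{\star},\infty)$ with $g_{0}(\underaccent{\bar}{s}^{+})=g_{0}(\underaccent{\bar}{s})$; put $\xi_{0}=\xi_{0}(\underaccent{\bar}{s})=\underaccent{\bar}{s}^{+}-\underaccent{\bar}{s}$. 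By~\eqref{eq:g0-equal} and the uniqueness in Lemma~\ref{lem:sQ}, $\tilde s(\xi_{0})=\underaccent{\bar}{s}$, hence $\tilde\gamma(\underaccent{\bar}{s},\xi_{0})=\theta(\xi_{0})$ by Lemma~\ref{lem:theta}; moreover $\xi_{0}(\underaccent{\bar}{s})>z^{\star}-\underaccent{\bar}{s}\to\infty$ as $\underaccent{\bar}{s}\to-\infty$.

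The first step is to fix $\underaccent{\bar}{s}\in(-\infty,z^{\star})$ so negative that both $k\mu+\mu g_{0}(\underaccent{\bar}{s})\ge\gamma^{\star}$ and $\theta(\xi_{0}(\underaccent{\bar}{s}))\ge\gamma^{\star}+\mu\bar K/\xi_{0}(\underaccent{\bar}{s})$ hold. Both are possible: $g_{0}(\underaccent{\bar}{s})\to\infty$ gives the first; $\xi_{0}(\underaccent{\bar}{s})\to\infty$ makes $\mu\bar K/\xi_{0}(\underaccent{\bar}{s})\to0$, and combined with~\eqref{eq:theta-infinity} it also gives $\theta(\xi_{0}(\underaccent{\bar}{s}))\to\infty$, so the second holds for $\underaccent{\bar}{s}$ sufficiently negative.

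The second step verifies~\eqref{eq:gamma-tilde-bound}. Fix $s\in\mathbb{R}$ and $\xi>0$. If $s\ge\underaccent{\bar}{s}$ then $y\vee\underaccent{\bar}{s}=y$ throughout $[s,s+\xi]$, so $\bar\gamma(s,\xi)=\tilde\gamma(s,\xi)\ge\theta(\xi)\ge\gamma^{\star}$. If $s<\underaccent{\bar}{s}$, I analyze $s\mapsto\bar\gamma(s,\xi)$ on $(-\infty,\underaccent{\bar}{s})$; since $y\mapsto g_{0}(y\vee\underaccent{\bar}{s})$ is continuous, this map is differentiable in $s$ with derivative $(\mu/\xi)\bigl(g_{0}((s+\xi)\vee\underaccent{\bar}{s})-g_{0}(\underaccent{\bar}{s})\bigr)$, and the monotonicity pattern of $g_{0}$ on either side of $z^{\star}$ together with $g_{0}(\underaccent{\bar}{s})=g_{0}(\underaccent{\bar}{s}^{+})$ shows that $s\mapsto\bar\gamma(s,\xi)$ is nonincreasing on $(-\infty,\underaccent{\bar}{s})$ when $\xi\le\xi_{0}$, and when $\xi>\xi_{0}$ is nonincreasing on $(-\infty,\underaccent{\bar}{s}^{+}-\xi]$ and increasing on $[\underaccent{\bar}{s}^{+}-\xi,\underaccent{\bar}{s})$. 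Hence $\inf_{s<\underaccent{\bar}{s}}\bar\gamma(s,\xi)$ equals $\tilde\gamma(\underaccent{\bar}{s},\xi)\ge\theta(\xi)\ge\gamma^{\star}$ when $\xi\le\xi_{0}$, and equals $\bar\gamma(s^{\dagger},\xi)$ with $s^{\dagger}=\underaccent{\bar}{s}^{+}-\xi$ when $\xi>\xi_{0}$. In the latter case $[s^{\dagger},s^{\dagger}+\xi]$ splits at $\underaccent{\bar}{s}$ into a part of length $\xi-\xi_{0}$, on which $g_{0}(\cdot\vee\underaccent{\bar}{s})\equiv g_{0}(\underaccent{\bar}{s})$, and the part $[\underaccent{\bar}{s},\underaccent{\bar}{s}^{+}]$ of length $\xi_{0}$; eliminating $\int_{\underaccent{\bar}{s}}^{\underaccent{\bar}{s}^{+}}g_{0}$ through the identity $\tilde\gamma(\underaccent{\bar}{s},\xi_{0})=\theta(\xi_{0})$ rewrites $\bar\gamma(s^{\dagger},\xi)$ as the convex combination
\[
\bar\gamma(s^{\dagger},\xi)=\frac{\xi-\xi_{0}}{\xi}\bigl(k\mu+\mu g_{0}(\underaccent{\bar}{s})\bigr)+\frac{\xi_{0}}{\xi}\,\theta(\xi_{0})+\frac{\mu}{\xi}\bigl(K(\xi)-K(\xi_{0})\bigr).
\]
By the choice of $\underaccent{\bar}{s}$, the first term is at least $\tfrac{\xi-\xi_{0}}{\xi}\gamma^{\star}$ and the second is at least $\tfrac{\xi_{0}}{\xi}\gamma^{\star}+\mu\bar K/\xi$, while $0\le K\le\bar K$ makes the third at least $-\mu\bar K/\xi$; adding, the $\mu\bar K/\xi$ terms cancel and $\bar\gamma(s^{\dagger},\xi)\ge\gamma^{\star}$. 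This exhausts all cases.

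The main obstacle is the term $(\mu/\xi)\bigl(K(\xi)-K(\xi_{0})\bigr)$: because $K$ is only bounded and lower semicontinuous and is evaluated at two different arguments, it may drop by as much as $\mu\bar K/\xi$, and no amount of pushing $\underaccent{\bar}{s}$ to the left removes this loss on its own. It is recovered only at the balanced point $s^{\dagger}$, where the portion of the ordering interval lying above $\underaccent{\bar}{s}$ has length \emph{exactly} $\xi_{0}$ and therefore contributes precisely $\theta(\xi_{0})$ rather than some larger, $s$-dependent amount; the surplus $\theta(\xi_{0})-\gamma^{\star}$, which~\eqref{eq:theta-infinity} forces to exceed $\mu\bar K/\xi_{0}\ge\mu\bar K/\xi$, is exactly what offsets it. Accordingly the two substantive pieces of the argument are the sign analysis that locates the minimizer $s^{\dagger}$ and the quantitative gap estimate behind the second requirement on $\underaccent{\bar}{s}$.
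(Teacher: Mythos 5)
Your proof is correct and follows essentially the same strategy as the paper's: you choose $\underaccent{\bar}{s}=\tilde{s}(\bar{\xi})$ for a large $\bar{\xi}$ (your $\xi_{0}(\underaccent{\bar}{s})$) whose gap $\theta(\bar{\xi})-\gamma(s^{\star},S^{\star})>\mu\bar{K}/\bar{\xi}$ absorbs the possible setup-cost mismatch, split at $s\geq\underaccent{\bar}{s}$ versus $s<\underaccent{\bar}{s}$, and in the hard case reduce to the decomposition of the ordering interval into the flat part and $[\underaccent{\bar}{s},\underaccent{\bar}{s}+\bar{\xi}]$. The only difference is organizational: you locate the exact minimizer $s^{\dagger}$ in $s$ by a first-order argument and finish with a convex-combination identity (your extra requirement $k\mu+\mu g_{0}(\underaccent{\bar}{s})\geq\gamma(s^{\star},S^{\star})$ is in fact already implied by the gap condition, since $g_{0}(\underaccent{\bar}{s})$ exceeds the average of $g_{0}$ over $[\underaccent{\bar}{s},\underaccent{\bar}{s}+\bar{\xi}]$), whereas the paper bounds the averaged integral directly in three subcases.
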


The modified relative value function is defined by
\begin{equation}
V^{\star}(z) = - \frac{\gamma(s^{\star},S^{\star})}{\mu}(z - \underaccent{\bar}{s}) + \int_{\underaccent{\bar}{s}} ^ {z} g_{0}(y \vee \underaccent{\bar}{s}) \,\mathrm{d}y \quad \mbox{for $z \in \mathbb{R}$},
\label{eq:V-star}
\end{equation}
with which we are ready to present the proof of Theorem~\ref{thm:main-results}.
\begin{proof}[Proof of Theorem~\ref{thm:main-results}]
Let us show that $ \big(\gamma(s^{\star},S^{\star}), V^{\star}\big) $ satisfies all conditions specified in Proposition~\ref{prop:lower-bound}, so $ U(s^{\star}, S^{\star}) $ is an optimal ordering policy. Clearly, $ V^{\star} $ is twice differentiable except at $ \underaccent{\bar}{s} $. By \eqref{eq:gamma-bar}--\eqref{eq:V-star},
\[
\bar{\gamma}(s,\xi)=\frac{\mu}{\xi}\big(  k\xi + K(\xi) + V^{\star}(s+\xi) - V^{\star}(s)\big) + \gamma(s^{\star},S^{\star}) \quad \mbox{for $ s \in \mathbb{R} $ and $ \xi > 0 $.}
\]
Then by \eqref{eq:gamma-tilde-bound},
\[  
V^{\star}(s+\xi) - V^{\star}(s) \geq -K(\xi) - k\xi,
\]
which implies that $ V^{\star} $ satisfies \eqref{eq:f-d}. By \eqref{eq:z-star}, $ g_{0}(z^{\star}) \leq g_{0}(z \vee \underaccent{\bar}{s}) \leq \big(g_{0}(\underaccent{\bar}{s}) \vee g_{0}(0) \big) $ for $ z < 0 $, from which condition \eqref{eq:fprime-n} follows. Condition \eqref{eq:fprime-p} holds because $ h $ is polynomially bounded. For $ z > \underaccent{\bar}{s} $, it follows from \eqref{eq:g0-equation} and \eqref{eq:V-star} that
\[  
\Gamma V^{\star} (z) + h(z) = \frac{1}{2}\sigma^{2} g_{0}^{\prime}(z) - \mu g_{0}(z) + h(z) + \gamma (s^{\star}, S^{\star}) = \gamma (s^{\star}, S^{\star}).
\]
For $ z < \underaccent{\bar}{s} $, $ g_{0}^{\prime}(z \vee \underaccent{\bar}{s}) = 0 $. Since $ \underaccent{\bar}{s} < z^{\star} $, it follows from \eqref{eq:z-star} that $ g_{0}^{\prime} (z) < 0 $ and $ g_{0}(z) > g_{0}(\underaccent{\bar}{s}) $. Then,
\[  
\Gamma V^{\star} (z) + h(z) = - \mu g_{0}(\underaccent{\bar}{s}) + h(z) + \gamma (s^{\star}, S^{\star}) > \frac{1}{2}\sigma^{2} g_{0}^{\prime}(z) - \mu g_{0}(z) + h(z) + \gamma (s^{\star}, S^{\star}) = \gamma (s^{\star}, S^{\star}).
\]
Hence,  $ \big(\gamma(s^{\star},S^{\star}), V^{\star}\big) $ satisfies condition \eqref{eq:lower-bound}. 
\end{proof}

\section{Policies subject to order-up-to bounds}
\label{sec:proof-3}

This section is devoted to the proof of Theorem~\ref{thm:dense}. Let $Y$ be an admissible policy. We first modify this policy to construct a policy $Y_{m}\in\mathcal{U}_{m}$, where $m$ is a fixed positive integer. Then, we prove that $\{Y_{m}\in\mathcal{U}_{m}:m=1,2,\ldots\}$ has a subsequence that satisfies \eqref{eq:dense}.

For each $Y$, we would construct a policy $Y_{m}\in\mathcal{U}_{m}$ that incurs less holding and shortage cost and less proportional cost. As $m$ goes large, the average setup cost under $Y_{m}$ should be asymptotically dominated by that under $Y$. Although by imposing an order-up-to bound, we can easily construct a policy that maintains a lower inventory level, we must make additional adjustments to ensure that the shortage level under $Y_{m}$ will not be higher. Such a policy is constructed as follows.

Let $Y_{m}^{c}$ be the continuous part of $Y_{m}$. Under $Y_{m}$, the inventory level at time $t$ is
\begin{equation}
Z_{m}(t)=X(t)+Y_{m}(t),
\label{eq:Zm}
\end{equation}
where $X(t)$ is given by \eqref{eq:X} and
\[
Y_{m}(t)=Y_{m}^{c}(t)+\sum_{0\leq u\leq t}\Delta Y_{m}(u).
\]
The continuous part of $Y_{m}$ is constructed by
\begin{equation}
Y_{m}^{c}(t)=\int_{0}^{t}1_{(-\infty, m]}(Z_{m}(u-))\,\mathrm{d}Y^{c}(u),
\label{eq:Ymc}
\end{equation}
where $Y^{c}$ is the continuous part of $Y$. On each sample path, $Y_{m}$ may have a jump either at a jump time of $Y$ or at a hitting time of zero by $Z_{m}$. Let $J_{m}=$ $\{t\geq0:\Delta Y_{m}(t)>0\}$ be the set of jump times of $Y_{m}$, $J=\{t\geq0:\Delta Y(t)>0\}$ the set of jump times of $Y$, and $I_{m}=\{t\geq0:Z_{m}(t-)=0\}$ the set of hitting times of zero by $Z_{m}$. Then, $J_{m}\subset J \cup I_{m}$. The size of each jump of $Y_{m}$ is specified as follows:
\begin{enumerate}[noitemsep,label=(J\arabic*),ref=J\arabic*]
\item \label{itm:J1} $\Delta Y_{m}(t)=0$ for $t\in J$, if $Z_{m}(t-)>m/2$; 
\item \label{itm:J2} $\Delta Y_{m}(t)=\Delta Y(t)$ for $t\in J$, if $Z_{m}(t-)\leq m/2$ and $Z_{m}(t-)+\Delta Y(t)\leq m$;
\item \label{itm:J3} $\Delta Y_{m}(t)=m-Z_{m}(t-)$ for $t\in J$, if $Z_{m}(t-)\leq m/2$ and $Z_{m}(t-)+\Delta Y(t)>m$;
\item \label{itm:J4} $\Delta Y_{m}(t)= (Z(t)\wedge m)^{+}$ for $t\in I_{m}\setminus J$, where $Z$ is the inventory process under policy $Y$.
\end{enumerate}
In other words, $Y_{m}$ does not make jumps when the inventory level is above $m/2$. If the inventory level is below $m/2$, $Y_{m}$ has simultaneous jumps with $Y$. Each simultaneous jump takes the corresponding jump size of $Y$, as long as the inventory level will not exceed $m$ after that jump; otherwise, the simultaneous jump will replenish the inventory level to $m$. In addition, $Y_{m}$ may have jumps when the inventory level reaches zero. In this case, it will replenish the inventory level to $Z(t)\wedge m$, if the inventory level of the system under policy $Y$ satisfies $Z(t)>0$; otherwise, $Y_{m}$ does not make a jump.

The above policy construction procedure is illustrated in Figure~\ref{fig:Zm}. We plot a sample path of the inventory process under policy $Y$ and the corresponding sample path under policy $Y_{m}$. We use the blue curve for the inventory process under $Y$, the dashed red curve for that under $Y_{m}$, and the black curve for their identical parts. The type of each jump is indicated beside the jump point. In addition to these jumps, we assume that $Y^{c}$ increases over time intervals (C1) and (C2). As $Z_{m}$ is below $m$ over (C1), $Y^{c}$ and $Y^{c}_{m}$ have the same increments during the time; however, $Y^{c}_{m}$ does not increase over (C2) while $Z_{m}$ is above $m$. 

\begin{figure}[t]
\centering
\includegraphics[width=4.5in]{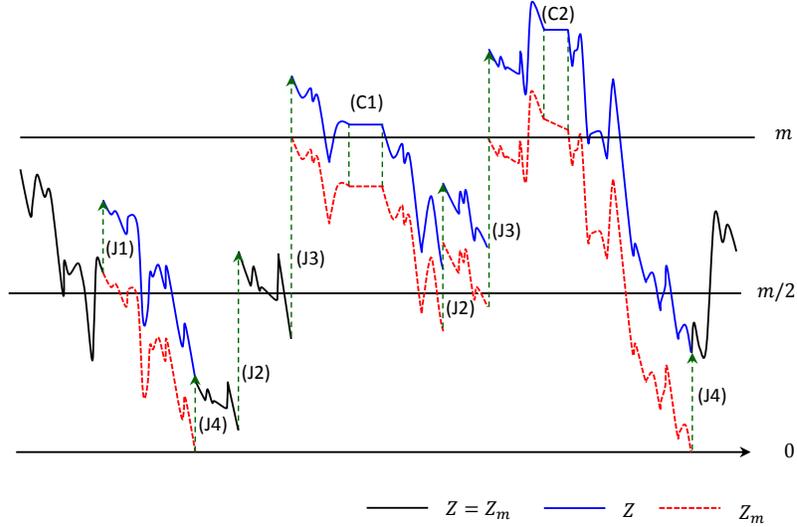}
\caption{A sample path of the inventory process under policy $Y$ and the corresponding sample path under the modified policy $Y_{m}$, with $Z$ given by \eqref{eq:Z} and $Z_{m}$ given by \eqref{eq:Zm}.}
\label{fig:Zm}
\end{figure}

The following lemma states that compared with policy $Y$, the modified policy $Y_{m}$ maintains a lower inventory level and the same shortage level.

\begin{lemma}
\label{lem:Zm}
Let $Y$ be an admissible policy. For a fixed positive integer $m$, let $Y_{m}$ be the policy constructed according to \eqref{eq:Ymc} and \textnormal{(\ref{itm:J1})--(\ref{itm:J4})}. Then, $Z_{m}(t)\leq Z(t)$ for all $t\geq0$ on each sample path, where $Z$ is the inventory process under policy $Y$. In particular, $Z_{m}(t)=Z(t)$ when $Z_{m}(t)<0$.
\end{lemma}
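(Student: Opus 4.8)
The plan is to track the \emph{gap process} $G(t) := Z(t) - Z_{m}(t)$. Since $Z$ and $Z_{m}$ are driven by the same uncontrolled process $X$ of \eqref{eq:X}, we have $G(t) = Y(t) - Y_{m}(t)$ with $G(0-) = 0$, and the two assertions of the lemma become (i) $G(t)\ge 0$ for all $t$ and (ii) $G(t)=0$ whenever $Z_{m}(t)<0$. A fact I will use repeatedly is that $Y_{m}$ is nondecreasing, so $Z_{m}=X+Y_{m}$ has only upward jumps; in particular $Z_{m}(t-)\le Z_{m}(t)$, and $Z_{m}$ can pass into $(-\infty,0)$ only by crossing $0$ continuously. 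The first step is to describe the increments of $G$. By \eqref{eq:Ymc} its continuous part has differential $1_{(m,\infty)}(Z_{m}(u-))\,\mathrm{d}Y^{c}(u)\ge 0$; at a jump time $t\in J$, rules (\ref{itm:J1})--(\ref{itm:J3}) give $0\le \Delta Y_{m}(t)\le \Delta Y(t)$, hence $\Delta G(t)\ge 0$; and every other jump of $G$ occurs at a time $t\in I_{m}\setminus J$, where $Z_{m}(t-)=0$ and $\Delta Y(t)=0$, so $Z(t)=Z(t-)=G(t-)$ and (\ref{itm:J4}) yields the key identity $\Delta G(t)=-(G(t-)\wedge m)^{+}$, i.e.\ $G(t)=(G(t-)-m)^{+}$ whenever $G(t-)\ge 0$. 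Thus $G$ is nondecreasing except for these downward jumps, each of which sends a nonnegative left limit to a nonnegative value.

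For (i), I would pass to the negative part $G^{-}:=(-G)^{+}$. It has $G^{-}(0-)=0$, is nonnegative, has a nonincreasing continuous part, and — by the increment description above, using the key identity at times in $I_{m}\setminus J$ and $\Delta G\ge 0$ elsewhere — has no upward jumps ($\Delta G^{-}(t)\le 0$ at every $t$). A right-continuous function with left limits that has nonincreasing continuous part and no upward jumps is nonincreasing, so $G^{-}\le G^{-}(0-)=0$; that is, $G\ge 0$.

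For (ii), fix $t$ with $Z_{m}(t)<0$ and set $t_{0}:=\sup\{u\le t:Z_{m}(u)>0\}$ (if this set is empty, run the argument on all of $[0,t]$ using $G(0-)=0$). Because $Z_{m}$ jumps only upward, $Z_{m}(u-)\le 0<m$ for all $u\in(t_{0},t]$, so on that interval the continuous parts of $Y$ and $Y_{m}$ coincide, every jump of $Y$ is of type (\ref{itm:J2}) (type (\ref{itm:J1}) needs $Z_{m}(u-)>m/2$, and type (\ref{itm:J3}) would lift $Z_{m}$ above $m$), and the only admissible jump of $Y_{m}$ at a time in $I_{m}$ has size $(Z(u)\wedge m)^{+}$, forced to $0$ since $Z_{m}$ stays $\le 0$; hence $G$ is constant on $(t_{0},t]$, equal to $G(t_{0})$. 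It remains to show $G(t_{0})=0$. Right-continuity gives $Z_{m}(t_{0})\le 0$, and since $t_{0}$ is a supremum of points where $Z_{m}>0$ without being such a point itself, there are $u_{n}\uparrow t_{0}$ with $Z_{m}(u_{n})>0$, so $Z_{m}(t_{0}-)\ge 0$; with $Z_{m}(t_{0}-)\le Z_{m}(t_{0})\le 0$ we get $Z_{m}(t_{0}-)=Z_{m}(t_{0})=0$, so $t_{0}\in I_{m}$. Then (\ref{itm:J2})--(\ref{itm:J4}) force $t_{0}\in I_{m}\setminus J$ with $\Delta Y_{m}(t_{0})=(Z(t_{0})\wedge m)^{+}=0$, hence $Z(t_{0})\le 0$; but $Z(t_{0})=Z(t_{0}-)=G(t_{0}-)\ge 0$ by (i), so $G(t_{0}-)=0$, and $\Delta G(t_{0})=0$ gives $G(t_{0})=0$. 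Therefore $G(t)=0$.

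The step I expect to be the real obstacle is assertion (i): $G$ is not monotone — it genuinely decreases at the times in $I_{m}\setminus J$ — so one cannot simply argue that $G$ starts at $0$ and never decreases. The argument must control these downward jumps quantitatively, and the crucial point is that at such a time $Z_{m}(t-)=0$, which identifies the value $Z(t)$ entering rule (\ref{itm:J4}) with $G(t-)$ and thereby caps the jump at $-(G(t-)\wedge m)^{+}$; recasting the claim through the monotone process $(-G)^{+}$ is what turns this into a one-line conclusion. The remaining work — the case analysis over jump types (\ref{itm:J1})--(\ref{itm:J4}) and the bookkeeping at the excursion endpoint $t_{0}$ in (ii) — is routine but should be carried out carefully.
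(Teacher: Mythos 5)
Your proof is correct. The second assertion is handled essentially as in the paper: you locate the last time $t_{0}\le t$ at which $Z_{m}$ was positive, show $Z_{m}(t_{0})=Z(t_{0})=0$ by ruling out every jump type there, and observe that on $(t_{0},t]$ only type\nobreakdash-(\ref{itm:J2}) jumps and matching continuous increments can occur, so the gap is frozen at $0$. For the first assertion your organization differs from the paper's, though the mechanism is the same: the paper anchors at the last time of a positive type\nobreakdash-(\ref{itm:J4}) jump, where (\ref{itm:J4}) re-establishes $Z_{m}(t_{0})=Z(t_{0})\wedge m\le Z(t_{0})$, and then propagates the ordering forward using $\Delta Y_{m}\le\Delta Y$ and $\mathrm{d}Y_{m}^{c}\le \mathrm{d}Y^{c}$; you instead run a global monotonicity argument on $(-G)^{+}$ with $G=Z-Z_{m}$, built on the same key identity $G(t)=(G(t-)-m)^{+}$ at type\nobreakdash-(\ref{itm:J4}) times. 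Your version has the mild advantage of not requiring the supremum defining the paper's $t_{0}$ to be attained (type\nobreakdash-(\ref{itm:J4}) jumps could in principle accumulate). The one step you assert without real justification is that the continuous part of $(-G)^{+}$ is nonincreasing; this is true but deserves a line, e.g.: since $h(x)=(-x)^{+}$ is $1$\nobreakdash-Lipschitz and nonincreasing and the continuous part of $G$ is nondecreasing, for $s<t$ one has $(-G(t))^{+}-(-G(s))^{+}\le (G(s)-G(t))^{+}\le \sum_{s<u\le t}(\Delta G(u))^{-}$, so the Lebesgue--Stieltjes measure of $(-G)^{+}$ is dominated on intervals (hence as a measure) by a purely atomic measure, forcing its non-atomic part to be nonpositive; combined with your check that all atoms of $(-G)^{+}$ are nonpositive, this yields that $(-G)^{+}$ is nonincreasing. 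With that line supplied the argument is complete.
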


Lemma~\ref{lem:Zm} implies that the modified policy $Y_{m}$ incurs less holding and shortage cost and less proportional cost than policy $Y$. To prove the comparison theorem, we should also establish asymptotic dominance between the average setup costs incurred by these two policies.

\begin{proof}[Proof of Theorem~\ref{thm:dense}]
Because $h$ is nondecreasing on $[0,\infty)$, it follows from Lemma~\ref{lem:Zm} that
\[
\int_{0}^{t}h(Z_{m}(u)) \, \mathrm{d}u \leq \int_{0}^{t} h(Z(u)) \,\mathrm{d}u \quad\mbox{for $ t\geq 0 $,}
\]
i.e., $Y_{m}$ incurs less holding and shortage cost than $Y$. Since $Z_{m}(t)\leq Z(t)$, we have $Y_{m}(t)\leq Y(t)$, so $Y_{m}$ incurs less proportional cost, too. By \eqref{eq:Ymc}, $ Y^{c}_{m}(t) \leq Y^{c}(t) $ for all $ t \geq 0 $. Then, if $ K(0+) = 0 $, we obtain $ \ell Y^{c}_{m}(t) \leq \ell Y^{c}(t) $, i.e., $Y_{m}^{c}$ incurs less setup cost than $Y^{c}$. If $ K(0+) > 0 $ and there exists some $t>0$ for which $Y^{c}(t)>0$, the cumulative setup cost incurred by $Y^{c}$ must be infinite. Therefore, we only need to consider the setup costs incurred by jumps.
	
When a jump of type (\ref{itm:J2}) is made by $Y_{m}$, the setup cost is equal to the cost incurred by the simultaneous jump of $Y$.
	
Consider two consecutive jumps of type (\ref{itm:J3}). Let $t_{1}$ and $t_{2}$ be their respective jump times with $0\leq t_{1} < t_{2}$. Because $X$ has continuous sample paths and $Y_{m}$ is nondecreasing, it follows from \eqref{eq:Zm} that $X(t_{1})-X(t_{2})\geq Z_{m}(t_{1})-Z_{m}(t_{2}-)\geq m/2$. Let
\[
t_{3}=\inf\Big\{u\in(0,t_{2}-t_{1}]:X(t_{1}+u)=X(t_{1})-\frac{m}{2}\Big\}.
\]
By the strong Markov property of Brownian motion, $t_{3}$ has the same distribution as
\[
\tau=\inf\Big\{u>0:-\mu u+\sigma B(u)=-\frac{m}{2}\Big\}
\]
where $B$ is a standard Brownian motion starting with $ B(0) = 0$. Because $\tau$ is the first hitting time of $-m/2$ by a Brownian motion with drift $-\mu$, we obtain $\mathbb{E}_{x}[t_{3}]=m/(2\mu)$. Let $N_{m,1}(t)$ be the number of jumps of type (\ref{itm:J3}) made by $Z_{m}$ up to time $t$. Because $t_{2}-t_{1}\geq t_{3}$, it follows that 
\[
\mathbb{E}_{x}[N_{m,1}(t)]\leq\frac{2\mu t}{m}+1.
\]
	
Now consider two consecutive positive jumps of type (\ref{itm:J4}). Let $\tilde{t} _{1}$ and $\tilde{t}_{2}$ be their respective jump times with $0\leq\tilde{t}_{1}<\tilde{t}_{2}$. We would like to show that there exists some $\tilde{t}_{0}\in[\tilde{t}_{1},\tilde{t}_{2})$ for which $Z_{m}(\tilde{t}_{0})>m/2$. Since $\Delta Y_{m}(\tilde{t}_{2})>0$, $Z_{m}(\tilde{t}_{2}-)\neq Z(\tilde{t}_{2}-)$. If $Z_{m}(\tilde{t}_{1}) = Z(\tilde{t}_{1})$, $\tilde{t}_{0}$ must exist because otherwise, $Y_{m}$ can only have jumps of type (\ref{itm:J2}) during $(\tilde{t}_{1},\tilde{t}_{2})$ and this yields $Z_{m}(\tilde{t}_{2}-) = Z(\tilde{t}_{2}-)$, a contradiction. If $Z_{m}(\tilde{t}_{1})\neq Z(\tilde{t}_{1})$, we have $Z_{m}(\tilde{t}_{1})=m$ and thus set $\tilde{t}_{0} = \tilde{t}_{1}$. Therefore, $Z_{m}(\tilde{t}_{0})>m/2$ holds for some $\tilde{t}_{0} \in [\tilde{t}_{1},\tilde{t}_{2})$. Let
\[
\tilde{t}_{3}=\inf\Big\{u\in(0,\tilde{t}_{2}-\tilde{t}_{0}] : X(\tilde{t}_{0}+u)=X(\tilde{t}_{0})-\frac{m}{2}\Big\},
\]
which also satisfies $\mathbb{E}_{x}[\tilde{t}_{3}] = m/(2\mu)$. Let $N_{m,2}(t)$ be the number of positive jumps of type (\ref{itm:J4}) made by $Z_{m}$ up to time $t$. Because $\tilde{t}_{3} < \tilde{t}_{2} - \tilde{t}_{1}$, we have
\[
\mathbb{E}_{x}[N_{m,2}(t)]\leq\frac{2\mu t}{m}+1.
\]
	
Put $ \bar{K} = \sup\{ K(\xi) : \xi > 0 \}$, which is finite by (\ref{itm:S2}). By the discussion above, 
\[
\operatorname{AC}(x,Y_{m})-\operatorname{AC}(x,Y)\leq\limsup_{t\rightarrow \infty}\frac{1}{t}{\mathbb{E}_{x}[N_{m,1}(t)+N_{m,2}(t)]\bar{K}} \leq \frac{4\mu \bar{K}}{m},
\]
from which we deduce that 
\[ \limsup_{m\rightarrow\infty}\operatorname{AC}(x,Y_{m})\leq\operatorname{AC}(x,Y). 
\]
By the Bolzano--Weierstrass theorem, $\{\operatorname{AC}(x,Y_{m}) : m=1,2,\ldots\}$ has a convergent subsequence, so inequality \eqref{eq:dense} holds. 
\end{proof}

\section{Optimal ordering policy with a step setup cost function}
\label{sec:algorithm}

By Proposition~\ref{prop:z-s-S-star} and Theorem~\ref{thm:main-results}, we need to solve the following nonlinear optimization problem to obtain the optimal ordering policy,
\begin{equation}
\begin{array}{l@{\quad}l@{}}
\mbox{min} & \gamma(s,S)\\ 
\mbox{s.t.} & s \leq S,
\end{array}
\label{eq:optimization}
\end{equation}
where $ \gamma(s,S) $, given by \eqref{eq:gamma}, is the long-run average cost under the $ (s,S) $ policy. With a setup cost function that satisfies (\ref{itm:S1})--(\ref{itm:S4}), one may solve \eqref{eq:optimization} numerically by a standard grid search or a random search (see, e.g., Chapter~4 in \citealt{HendrixToth10}). When the setup cost function takes certain forms, it is possible to obtain the optimal solution in a more efficient way. In this section, we consider the optimal ordering policy when the setup cost is a step function satisfying (\ref{itm:S1})--(\ref{itm:S4}), i.e.,
\begin{equation}
K(\xi)=\sum_{n=1}^{N}K_{n}\cdot 1_{(Q_{n-1},Q_{n})}(\xi)+\sum_{n=1}^{N-1}(K_{n} \wedge K_{n+1})\cdot1_{\{Q_{n}\}}(\xi) \quad \mbox{for $ \xi \geq 0 $,}
\label{eq:setupcost}
\end{equation}
where $N$ is a positive integer, $0=Q_{0}<Q_{1}<\cdots<Q_{N-1}<Q_{N}=\infty$, and $K_{1},\ldots,K_{N}$ are nonnegative real numbers with $K_{n}\neq K_{n+1}$ for $n=1,\ldots,N-1$. The setup cost is $K_{n}$ for any order quantity within the open interval $(Q_{n-1},Q_{n})$. When the order quantity is $Q_{n}$ for $n=1,\ldots,N-1$, we assume that the buyer is required to pay the lower fee of $K_{n}$ and $K_{n+1}$. This step function encompasses most setup cost structures in the literature and in practice, e.g., those in \eqref{eq:two-piece}--\eqref{eq:free-shipping}.

When the step setup cost function in \eqref{eq:setupcost} has $ K_{1} = 0 $, by placing small orders, the inventory system can be exempt from setup fees without incurring additional holding and shortage cost. In this case, we may assume the setup cost to be a zero function. As we discussed in Remark~\ref{rmk:base-stock}, the optimal policy will be a base stock policy whose base stock level is fixed at $ z^{\star} $. When the setup cost function in \eqref{eq:setupcost} has $ K_{1} > 0 $, by Theorem~\ref{thm:main-results} and Proposition~\ref{prop:z-s-S-star}, the optimal reorder and order-up-to levels must satisfy $ s^{\star} < S^{\star} $. We may follow a five-step procedure to obtain the optimal parameters.

\begin{enumerate}[itemindent=1em,label=Step~\arabic*:,ref=\arabic*]
\item \label{itm:step-1} Obtain $z^{\star}$ by 	solving the integral equation \eqref{eq:optimal-barrier}. If $ K_{1} = 0 $ in \eqref{eq:setupcost}, taking $ s^{\star} = S^{\star} = z^{\star} $, we obtain an optimal ordering policy, which is a base stock policy with base stock level $ z^{\star} $. The minimum long-run average cost is $ \nu^{\star} = \gamma(z^{\star},z^{\star}) = k\mu + h(z^{\star})$. Proceed to steps~\ref{itm:step-2}--\ref{itm:step-5} if and only if $ K_{1} > 0 $.
\item \label{itm:step-2} Let
\begin{equation}
\Lambda(y) = \int_{-\infty}^{\infty}1_{(-\infty, y]}(g_{0}(u))\,\mathrm{d}u,
\label{eq:f0}
\end{equation}
where $ g_{0} $ is given by \eqref{eq:g0}. For $n=1,\ldots,N$, obtain $\hat{\nu}_{n}$ by solving the integral equation
\[
\int_{h(z^{\star})/\mu}^{-k+\hat{\nu}_{n}/\mu}\Lambda(u)\,\mathrm{d}u=K_{n}.
\]
Then, obtain $(\hat{s}_{n},\hat{S}_{n})$ by solving
\[
g_{0}(\hat{s}_{n})=g_{0}(\hat{S}_{n})=-k+\frac{\hat{\nu}_{n}}{\mu}.
\]
Put $\hat{\xi}_{n} = \hat{S}_{n}-\hat{s}_{n}$.
\item \label{itm:step-3} Define three index sets
\begin{align*}
\mathcal{N}_{<} & =\{n=1,\ldots,N:\hat{\xi}_{n}\leq Q_{n-1}\},\\
\mathcal{N}_{=} &  =\{n=1,\ldots,N:Q_{n-1}<\hat{\xi}_{n}<Q_{n}\},\\
\mathcal{N}_{>} & =\{n=1,\ldots,N:\hat{\xi}_{n}\geq Q_{n}\}.
\end{align*}
Put
\begin{equation}
\xi_{n}^{\star}=
\begin{cases}
Q_{n-1} & \mbox{if }n\in\mathcal{N}_{<},\\
\hat{\xi}_{n} & \mbox{if }n\in\mathcal{N}_{=},\\
Q_{n} & \mbox{if }n\in\mathcal{N}_{>}.
\end{cases}
\label{eq:Q-star}
\end{equation}
Then, define the candidate index set by
\begin{equation}
\mathcal{N}=\{n=1,\ldots,N:K(\xi_{n}^{\star})=K_{n}\}.
\label{eq:candidate-set}
\end{equation}
\item \label{itm:step-4} For $n\in\mathcal{N}_{=}$, let
\begin{equation}
s_{n}=\hat{s}_{n}, \quad S_{n}=\hat{S}_{n}, \quad \nu_{n}=\hat{\nu}_{n}.
\label{eq:hat-s-S-A}
\end{equation}
For $n\in\mathcal{N}\setminus\mathcal{N}_{=}$, obtain $(s_{n},S_{n})$
by solving the system of equations
\begin{equation}
\begin{cases}
S_{n}-s_{n}=\xi_{n}^{\star},\\
g_{0}(s_{n})=g_{0}(S_{n}),
\end{cases}
\label{eq:selected-system}
\end{equation}
and put
\begin{equation}
\nu_{n}=k\mu  + \frac{K(\xi_{n}^{\star})\mu}{\xi_{n}^{\star}} + \frac{\mu}
{\xi_{n}^{\star}}\int_{s_{n}}^{S_{n}}g_{0}(u)\,\mathrm{d}u.
\label{eq:gamma-n}
\end{equation}
\item \label{itm:step-5} Let
\begin{equation}
n^{\star}=\min\{n\in\mathcal{N}:\nu_{n}\leq \nu_{i}\mbox{ for all }i\in\mathcal{N}\}.
\label{eq:n-star}
\end{equation}
Taking
\begin{equation}
s^{\star}=s_{n^{\star}}\quad \mbox{and} \quad S^{\star}=S_{n^{\star}},
\label{eq:star}
\end{equation}
we obtain the optimal ordering policy, which is an $ (s,S) $ policy with reorder level $ s^{\star} $ and order-up-to level $ S^{\star} $. The minimum long-run average cost is $ \nu^{\star} = \gamma(s^{\star},S^{\star}) = \nu_{n^{\star}}$, where $ \nu_{n^{\star}} $ is given by \eqref{eq:gamma-n}--\eqref{eq:n-star}.
\end{enumerate}
The following corollary of Theorem~\ref{thm:main-results} states the optimality of the obtained ordering policy.

\begin{corollary}
\label{crly:algorithm}	
Assume that the setup cost $ K $ is given by \eqref{eq:setupcost} and that the holding and shortage cost $ h $ satisfies \textnormal{(\ref{itm:H1})--(\ref{itm:H5})}. If $ K_{1} = 0 $, the base stock policy $ U(z^{\star}, z^{\star}) $ is an optimal ordering policy that minimizes the long-run average cost, i.e., $ \nu^{\star} = \gamma(z^{\star}, z^{\star}) = k\mu + h(z^{\star}) $, where $ \gamma $ is given by \eqref{eq:gamma}. If $ K_{1} > 0 $, with $ (s^{\star}, S^{\star}) $ uniquely determined by steps~\ref{itm:step-1}--\ref{itm:step-5} of the above algorithm, $ U(s^{\star}, S^{\star}) $ is an optimal ordering policy that minimizes the long-run average cost, i.e., $ \nu^{\star} = \gamma(s^{\star}, S^{\star}) $.
\end{corollary}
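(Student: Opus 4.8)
The plan is to reduce the corollary to a one–variable minimization and then verify that steps~\ref{itm:step-1}--\ref{itm:step-5} solve it. By Theorem~\ref{thm:main-results} together with Proposition~\ref{prop:z-s-S-star}, an optimal policy is $U(s^{\star},S^{\star})$ where $(s^{\star},S^{\star})$ minimizes $\gamma$ over $\{s\le S\}$; equivalently, writing $\theta(\xi)=\tilde{\gamma}(\tilde{s}(\xi),\xi)$ as in \eqref{eq:theta} and using $(s^{\star},S^{\star})=(\tilde{s}(\xi^{\star}),\tilde{S}(\xi^{\star}))$, it suffices to produce a minimizer $\xi^{\star}$ of $\theta$ on $[0,\infty)$ (and its value), since Proposition~\ref{prop:z-s-S-star} guarantees the minimum is attained. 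First I would dispose of the case $K_{1}=0$: then $K(0+)=0$, hence $\ell=0$, so by \eqref{eq:gamma-tilde} and $g_{0}\ge g_{0}(z^{\star})$ (Lemma~\ref{lem:g0}), for every $\xi>0$
\[
\theta(\xi)\ \ge\ k\mu+\frac{\mu}{\xi}\int_{\tilde{s}(\xi)}^{\tilde{S}(\xi)}g_{0}(y)\,\mathrm{d}y\ \ge\ k\mu+\mu g_{0}(z^{\star})\ =\ k\mu+h(z^{\star})\ =\ \theta(0),
\]
where $\mu g_{0}(z^{\star})=h(z^{\star})$ by \eqref{eq:optimal-barrier}. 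Thus $\xi^{\star}=0$ is optimal, $(s^{\star},S^{\star})=(z^{\star},z^{\star})$, and $\nu^{\star}=k\mu+h(z^{\star})$, exactly the output of Step~\ref{itm:step-1}.

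For $K_{1}>0$ we have $\ell=\infty$ and $\theta(0)=\infty$, so any minimizer is positive. The key step is an explicit formula for $\theta$ on each step of $K$. Using that $g_{0}\ge 0$ and, by Lemma~\ref{lem:g0}, is strictly decreasing on $(-\infty,z^{\star})$ and strictly increasing on $(z^{\star},\infty)$, a layer-cake (Tonelli) computation would give, with $v(\xi):=g_{0}(\tilde{s}(\xi))=g_{0}(\tilde{S}(\xi))$,
\[
\int_{\tilde{s}(\xi)}^{\tilde{S}(\xi)}g_{0}(y)\,\mathrm{d}y\ =\ \xi\,v(\xi)-\int_{g_{0}(z^{\star})}^{v(\xi)}\Lambda(u)\,\mathrm{d}u,\qquad \Lambda(v(\xi))=\xi,
\]
with $\Lambda$ as in \eqref{eq:f0}; here $v$ is continuous and strictly increasing on $(0,\infty)$ and $\Lambda$ is its inverse. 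Substituting into \eqref{eq:gamma-tilde}, on $\xi\in(Q_{n-1},Q_{n})$ where $K\equiv K_{n}$ one gets $\theta(\xi)=\theta_{n}(\xi)$ with
\[
\theta_{n}(\xi)=k\mu+\frac{\mu}{\xi}\Big(K_{n}-\int_{g_{0}(z^{\star})}^{v(\xi)}\Lambda(u)\,\mathrm{d}u\Big)+\mu v(\xi),\qquad \theta_{n}'(\xi)=\frac{\mu}{\xi^{2}}\Big(\int_{g_{0}(z^{\star})}^{v(\xi)}\Lambda(u)\,\mathrm{d}u-K_{n}\Big).
\]
Since $\Lambda(u)>0$ for $u>g_{0}(z^{\star})$ and $v$ is increasing, $\theta_{n}$ is strictly decreasing and then strictly increasing, with a unique interior stationary point $\hat{\xi}_{n}$ determined by $\int_{g_{0}(z^{\star})}^{v(\hat{\xi}_{n})}\Lambda=K_{n}$. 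Unwinding \eqref{eq:optimal-barrier} ($h(z^{\star})/\mu=g_{0}(z^{\star})$) then identifies the quantities of Step~\ref{itm:step-2}: $v(\hat{\xi}_{n})=-k+\hat{\nu}_{n}/\mu$, $(\hat{s}_{n},\hat{S}_{n})=(\tilde{s}(\hat{\xi}_{n}),\tilde{S}(\hat{\xi}_{n}))$, $\hat{\xi}_{n}=\Lambda(v(\hat{\xi}_{n}))$, and $\hat{\nu}_{n}=\theta_{n}(\hat{\xi}_{n})$; and, reusing $g_{0}(s_{n})=g_{0}(S_{n})$, the $\nu_{n}$ of \eqref{eq:hat-s-S-A}--\eqref{eq:gamma-n} equals $\theta_{n}(\xi_{n}^{\star})$, where $\xi_{n}^{\star}$ in \eqref{eq:Q-star} is precisely the projection of $\hat{\xi}_{n}$ onto $[Q_{n-1},Q_{n}]$ and $(s_{n},S_{n})=(\tilde{s}(\xi_{n}^{\star}),\tilde{S}(\xi_{n}^{\star}))$.

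Finally I would assemble the global minimum from the pieces. By unimodality, $\inf_{[Q_{n-1},Q_{n}]}\theta_{n}=\theta_{n}(\xi_{n}^{\star})=\nu_{n}$; when $\hat{\xi}_{n}$ is interior (i.e.\ $n\in\mathcal{N}_{=}$) this is attained at $\xi_{n}^{\star}=\hat{\xi}_{n}$ with $K(\xi_{n}^{\star})=K_{n}$ automatically, whereas when $\hat{\xi}_{n}$ is exterior ($n\in\mathcal{N}_{<}\cup\mathcal{N}_{>}$) the value $\nu_{n}$ is attained by the genuine $\theta$ at the endpoint $\xi_{n}^{\star}$ exactly when the true setup cost there ($K_{n-1}\wedge K_{n}$ or $K_{n}\wedge K_{n+1}$) equals $K_{n}$, i.e.\ $n\in\mathcal{N}$; and when $n\notin\mathcal{N}$ that same value is already captured, with a strictly smaller setup cost, by a neighbouring step, so nothing is lost. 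Since $\theta$ is finite-valued and continuous on each $(Q_{n-1},Q_{n})$ and lower semicontinuous overall (Lemma~\ref{lem:theta}), its attained minimum occurs either at some interior $\hat{\xi}_{n}$ or at a breakpoint $Q_{n}$, and in either case at a point of the form $\xi_{n}^{\star}$ with $n\in\mathcal{N}$; hence $\inf_{\xi>0}\theta(\xi)=\min_{n\in\mathcal{N}}\nu_{n}$. Combining this with \eqref{eq:n-star}--\eqref{eq:star} shows $n^{\star}$ is well defined, $(s^{\star},S^{\star})=(\tilde{s}(\xi_{n^{\star}}^{\star}),\tilde{S}(\xi_{n^{\star}}^{\star}))$ minimizes $\theta$, and $\gamma(s^{\star},S^{\star})=\nu_{n^{\star}}=\nu^{\star}$; Theorem~\ref{thm:main-results} then yields optimality of $U(s^{\star},S^{\star})$, while uniqueness of $(s^{\star},S^{\star})$ follows from the injectivity of $\tilde{s}$ (Lemma~\ref{lem:sQ}) and the tie-break $\min$ in \eqref{eq:n-star}. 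The hard part will be precisely this boundary bookkeeping---confirming that narrowing attention to the finite candidate set $\mathcal{N}$ and the projected quantities $\xi_{n}^{\star}$ never discards the true infimum, given the half-open steps of $K$ and the $K_{n}\wedge K_{n+1}$ values at the breakpoints; everything else is the routine layer-cake identity and a single-variable unimodality argument.
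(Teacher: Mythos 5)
Your skeleton is the paper's: reduce to minimizing $\theta$ over $\xi\ge 0$, dispose of $K_1=0$ by showing $\theta(\xi)\ge\theta(0)=k\mu+h(z^{\star})$, and for $K_1>0$ combine per-step unimodality of $\theta_n$ with the projection $\xi_n^{\star}$ and the candidate set $\mathcal{N}$. Your layer-cake identity $\int_{\tilde{s}(\xi)}^{\tilde{S}(\xi)}g_0=\xi v(\xi)-\int_{g_0(z^{\star})}^{v(\xi)}\Lambda$ with $\Lambda(v(\xi))=\xi$, and the resulting $\theta_n'(\xi)=\mu\xi^{-2}\big(\int_{g_0(z^{\star})}^{v(\xi)}\Lambda-K_n\big)$, are exactly the content of Lemmas~\ref{lem:triple} and~\ref{lem:theta-kappa} (there written as $L(\xi)=I(g_0(\tilde{s}(\xi)))$ and $\theta'(\xi)=\mu(L(\xi)-\kappa)/\xi^{2}$); your $K_1=0$ argument is a slightly more direct version of the paper's. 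All of that is correct.

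The gap is precisely the step you defer. The assertion that ``when $n\notin\mathcal{N}$ that same value is already captured, with a strictly smaller setup cost, by a neighbouring step'' \emph{is} the paper's Lemma~\ref{lem:chi}, and it is not routine: one must first prove that the neighbour $\underaccent{\bar}{\chi}(n)=\max\{j\in\mathcal{N}:j<n\}$ (resp.\ $\bar{\chi}(n)$) exists at all --- the paper does this by induction, bootstrapping off the fact that $K_1>0$ forces $\hat{\xi}_1>Q_0$ so $1\notin\mathcal{N}_{<}$ --- and then chain $\tilde{\nu}_n=\theta_n(Q_{n-1})>\theta_{\underaccent{\bar}{\chi}(n)}(Q_{n-1})>\theta_{\underaccent{\bar}{\chi}(n)}(\xi^{\star}_{\underaccent{\bar}{\chi}(n)})$, where the last inequality needs the strict monotonicity of $\hat{\xi}_n$ in $K_n$ (Lemma~\ref{lem:triple}) to place $\hat{\xi}_{\underaccent{\bar}{\chi}(n)}$ to the left of $Q_{n-1}$ before unimodality applies. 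Your alternative ``attainment'' route (the minimum of $\theta$ is attained by lower semicontinuity plus coercivity, and any minimizer must equal some $\xi_n^{\star}$ with $n\in\mathcal{N}$) would actually bypass Lemma~\ref{lem:chi} and is arguably cleaner, but its breakpoint case is also only asserted: if the global minimizer is $Q_n$, the active setup cost there is $K_n\wedge K_{n+1}$, and you must check that the cheaper index $j\in\{n,n+1\}$ has $\hat{\xi}_j$ on the far side of $Q_n$ (otherwise the interior of step $j$ would strictly beat $Q_n$, contradicting global optimality), which is what forces $\xi_j^{\star}=Q_n$ and $K(\xi_j^{\star})=K_j$, i.e.\ $j\in\mathcal{N}$. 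Until one of these two arguments is written out, the identity $\inf_{\xi>0}\theta(\xi)=\min_{n\in\mathcal{N}}\nu_n$ --- which is the entire content of steps~\ref{itm:step-3}--\ref{itm:step-5} --- is not established.
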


Let us illustrate how the five-step algorithm yields the optimal policy. We obtain $ z^{\star} $, the minimizer of $ g_{0} $, at step~\ref{itm:step-1}. By Proposition~\ref{prop:z-s-S-star} and Theorem~\ref{thm:main-results}, the optimal policy is a base stock policy if $ K_{1} = 0 $, with $ z^{\star} $ being the optimal base stock level. If $ K_{1} > 0$, the optimal policy is of the $ (s,S) $ type with $ s<S $, and we obtain the optimal reorder and order-up-to levels by steps~\ref{itm:step-2}--\ref{itm:step-5}.

Assuming the setup cost is a constant $ K_{n} $ for any order quantity, we find the optimal reorder and order-up-to levels $ (\hat{s}_{n}, \hat{S}_{n}) $ for $ n = 1,\ldots,N $ in step~\ref{itm:step-2}. Under this policy, the quantity of each order is $ \hat{\xi}_{n} $ and the long-run average cost is $ \hat{\nu}_{n} $. The uniqueness and optimality of the obtained policy can be deduced from the following lemma.
\begin{lemma}
\label{lem:triple}
Let $ \kappa $ be a nonnegative number. Assume that $ K(\xi) = \kappa $ for all $ \xi > 0 $ and that $h$ satisfies \textnormal{(\ref{itm:H1})--(\ref{itm:H5})}. Then, there exists a unique $ \hat{\xi} \geq 0$ such that
\begin{equation}
\theta(\hat{\xi}) = \inf\{ \gamma(s,S) : s \leq S \},
\label{eq:theta-xi-hat}
\end{equation}
where $ \gamma $ is given by \eqref{eq:gamma} and $ \theta $ is given by \eqref{eq:theta}. In particular, $ \hat{\xi} = 0 $ if and only if $ \kappa = 0 $. Write
\begin{equation}
\hat{\nu} = \theta(\hat{\xi}), \quad \hat{s} = \tilde{s}(\hat{\xi}), \quad \hat{S} = \tilde{S}(\hat{\xi}),
\label{eq:hat}
\end{equation}
where $ \tilde{s} $ and $ \tilde{S} $ are defined by \eqref{eq:g0-equal}. Then, $ \hat{\nu} $ is the unique solution to
\begin{equation}
\int_{h(z^{\star})/\mu}^{-k+\hat{\nu}/\mu}\Lambda(u)\,\mathrm{d}u = \kappa
\quad \mbox{and} \quad \hat{\nu} \geq k\mu + h(z^{\star}),
\label{eq:nu}
\end{equation}
where $ \Lambda $ is given by \eqref{eq:f0} and $ (\hat{s}, \hat{S}) $ is the unique solution to 
\begin{equation}
g_{0}(\hat{s})=g_{0}(\hat{S})=-k+\frac{\hat{\nu}}{\mu}.
\label{eq:g0-solution}
\end{equation} 
Moreover, $ \hat{\xi} $, $\hat{S}$, and $ \hat{\nu} $ are strictly increasing in $\kappa$, whereas $\hat{s}$ is strictly decreasing in $\kappa$.
\end{lemma}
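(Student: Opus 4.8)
The plan is to reduce everything to the single-variable function $\theta$ from \eqref{eq:theta-definition}--\eqref{eq:theta} and exploit the monotonicity already encoded in Lemmas~\ref{lem:g0}--\ref{lem:theta}. First I would fix $\kappa\geq 0$, set $K\equiv\kappa$, and note that by \eqref{eq:gamma-tilde} the quantity $\tilde\gamma(\tilde s(\xi),\xi)=\theta(\xi)$ is, for $\xi>0$, equal to $k\mu+\kappa\mu/\xi+(\mu/\xi)\int_{\tilde s(\xi)}^{\tilde S(\xi)}g_0(y)\,\mathrm dy$, while $\theta(0)=(k+\ell)\mu+\mu g_0(z^\star)$ with $\ell=\infty$ if $\kappa>0$ and $\ell=0$ if $\kappa=0$. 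Thus $\theta(0)=\infty$ when $\kappa>0$ and $\theta(0)=k\mu+h(z^\star)$ when $\kappa=0$ (using $\mu g_0(z^\star)=h(z^\star)$ from \eqref{eq:optimal-barrier}). Combined with $\lim_{\xi\to\infty}\theta(\xi)=\infty$ from \eqref{eq:theta-infinity} and the lower semicontinuity of $\theta$ from Lemma~\ref{lem:theta}, the extreme value theorem gives a minimizer $\hat\xi\in[0,\infty)$ with $\theta(\hat\xi)=\inf\{\gamma(s,S):s\le S\}$; and $\hat\xi=0$ forces $\kappa=0$, while $\kappa=0$ forces $\hat\xi=0$ since then $\theta(0)=k\mu+h(z^\star)\le\theta(\xi)$ for all $\xi>0$ (the integral term is $\geq\mu g_0(z^\star)$ because $g_0\geq g_0(z^\star)$ by \eqref{eq:z-star}).

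Next I would establish uniqueness of $\hat\xi$ by showing $\theta$ is strictly unimodal — strictly decreasing then strictly increasing — on $(0,\infty)$ when $\kappa>0$. The clean way is to reparametrize by the common value $v=g_0(\tilde s(\xi))=g_0(\tilde S(\xi))$. By \eqref{eq:z-star} and \eqref{eq:g0-limit}, $v\mapsto\tilde s$ and $v\mapsto\tilde S$ are the two inverse branches of $g_0$ on $(-\infty,z^\star)$ and $(z^\star,\infty)$ respectively; as $v$ ranges over $(g_0(z^\star),\infty)$, $\tilde S-\tilde s=\xi$ increases from $0$ to $\infty$ (this is \eqref{eq:s-tilde-prime}, \eqref{eq:s-tilde-infinity}). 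A direct differentiation of $\xi(\theta-k\mu)=\kappa\mu+\mu\int_{\tilde s}^{\tilde S}g_0$ in $v$, using $g_0(\tilde S)=g_0(\tilde s)=v$ and $\mathrm d\tilde S/\mathrm dv-\mathrm d\tilde s/\mathrm dv=\mathrm d\xi/\mathrm dv>0$, yields $\xi\,\mathrm d\theta/\mathrm dv + (\theta-k\mu)\mathrm d\xi/\mathrm dv = \mu v\,\mathrm d\xi/\mathrm dv$, i.e. $\mathrm d\theta/\mathrm dv$ has the same sign as $\mu v-(\theta-k\mu)=\mu v - \kappa\mu/\xi-(\mu/\xi)\int_{\tilde s}^{\tilde S}g_0 = (\mu/\xi)\big(v\xi-\kappa-\int_{\tilde s}^{\tilde S}g_0\big)$. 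Since $g_0\le v$ on $[\tilde s,\tilde S]$, the bracket $v\xi-\int_{\tilde s}^{\tilde S}g_0\ge 0$ and is strictly increasing in $v$ (both $\xi$ grows and the gap $v-g_0$ grows pointwise), so the bracket, hence $\theta'$, changes sign exactly once — from negative to positive — as $v$ increases. This pins down a unique critical point $\hat v$, hence unique $\hat\xi,\hat s,\hat S,\hat\nu$, and the first-order condition reads $\hat\nu/\mu = k + \hat v$, i.e. \eqref{eq:g0-solution}. I expect this sign-change argument — showing the bracket is monotone in $v$, not merely nonnegative — to be the main obstacle, and it is where condition \eqref{itm:H4} (strict monotonicity of $h'$, hence of $g_0$ away from $z^\star$) is genuinely used.

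Finally I would derive the integral characterization \eqref{eq:nu} and the comparative statics. Writing $v=g_0(\tilde s)=g_0(\tilde S)$ and using the layer-cake identity, for any level $w\ge g_0(z^\star)$ one has $\tilde S(w)-\tilde s(w)=\Lambda(w)$ with $\Lambda$ from \eqref{eq:f0} (the Lebesgue measure of $\{g_0\le w\}$). Integrating $\mathrm d\big(\xi(\theta-k\mu)\big)/\mathrm dv = \mu v\,\mathrm d\xi/\mathrm dv$ — equivalently $\mathrm d\big(\xi(\theta-k\mu)-\mu\int g_0\big)=0$ is not quite it; rather integrate by parts: $\int_{g_0(z^\star)}^{\hat v}\Lambda(w)\,\mathrm dw = [\,w\Lambda(w)\,] - \int w\,\mathrm d\Lambda = \hat v\hat\xi - \int_{\tilde s(\hat v)}^{\tilde S(\hat v)} g_0(y)\,\mathrm dy$ after the change of variables $w=g_0(y)$ on each branch, and this equals $\kappa$ by the first-order condition $v\xi-\int g_0=\kappa$ at $v=\hat v$. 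Substituting $\hat v=-k+\hat\nu/\mu$ and $g_0(z^\star)=h(z^\star)/\mu$ gives \eqref{eq:nu}; the constraint $\hat\nu\ge k\mu+h(z^\star)$ is just $\hat v\ge g_0(z^\star)$. Uniqueness of $\hat\nu$ solving \eqref{eq:nu} is immediate since $\Lambda\ge 0$ and $\Lambda>0$ on $(g_0(z^\star),\infty)$, so the left side is strictly increasing in $\hat\nu$; and since the left side is also strictly increasing and continuous in the upper limit while $\kappa$ appears alone on the right, $\hat\nu$ is strictly increasing in $\kappa$. Then $\hat v=-k+\hat\nu/\mu$ is strictly increasing in $\kappa$, whence $\hat S=\tilde S(\hat v)$ and $\hat\xi=\Lambda(\hat v)$ are strictly increasing and $\hat s=\tilde s(\hat v)$ strictly decreasing in $\kappa$, by \eqref{eq:z-star} and \eqref{eq:s-tilde-prime}.
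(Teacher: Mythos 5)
Your proposal is correct and follows essentially the same route as the paper: existence via lower semicontinuity and coercivity of $\theta$, strict unimodality of $\theta$ through the sign of $L(\xi)-\kappa$ (which is exactly the content of the paper's Lemma~\ref{lem:theta-kappa}), the Tonelli/layer-cake identity $\int_{g_{0}(z^{\star})}^{\hat v}\Lambda = L(\hat\xi)$, and monotonicity in $\kappa$ read off from the strictly increasing integral equation. The only difference is presentational — you parametrize by the level $v=g_{0}(\tilde s(\xi))$ rather than by $\xi$, and derive the first-order condition before the uniqueness of the equations' solutions instead of after — which changes nothing of substance.
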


\begin{remark}
With a constant setup cost, \citet{Bather66} identified a set of necessary and sufficient conditions for the optimal $ (s,S) $ policy that minimizes the long-run average cost. Those conditions are equivalent to \eqref{eq:nu}--\eqref{eq:g0-solution} in Lemma~\ref{lem:triple}; see (4.2)--(4.4) and (5.4)--(5.5) in \citet{Bather66}. In particular, our Brownian control problem is reduced to Bather's problem when $ N = 1 $ in \eqref{eq:setupcost}.
\end{remark}

The next lemma is a technical result for proving Lemma~\ref{lem:triple} and Corollary~\ref{crly:algorithm}. It specifies how $ \theta $, the minimum average cost function, changes with the order quantity when the setup cost is assumed to be constant.

\begin{lemma}
\label{lem:theta-kappa}
Let $ \kappa $ be a positive number. Assume that $ K(\xi) = \kappa $ for all $ \xi > 0 $ and that $h$ satisfies \textnormal{(\ref{itm:H1})--(\ref{itm:H5})}. Then, there exists a unique $ \hat{\xi} > 0$ such that $ L(\hat{\xi}) = \kappa $ where
\[
L(\xi) = \int_{\tilde{s}(\xi)}^{\tilde{S}(\xi)}\big(  g_{0}(\tilde{s}(\xi))  -g_{0}(y)\big)  \,\mathrm{d}y.
\]
Moreover, the first derivative of $ \theta $ satisfies
\begin{equation}
\begin{cases}
\theta^{\prime}(\xi)<0 & \text{for }0< \xi < \hat{\xi},\\
\theta^{\prime}(\xi)=0 & \text{for }\xi = \hat{\xi},\\
\theta^{\prime}(\xi)>0 & \text{for }\xi > \hat{\xi}.
\end{cases}
\label{eq:theta-monotonicity}
\end{equation}
\end{lemma}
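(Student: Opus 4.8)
The plan is to compute $\theta'(\xi)$ explicitly from the envelope relation $\theta(\xi)=\tilde\gamma(\tilde s(\xi),\xi)$ in Lemma~\ref{lem:theta}, then show that the sign of $\theta'$ is governed by the single-crossing behaviour of the auxiliary function $L$. First I would fix $\kappa>0$ and recall from \eqref{eq:gamma-tilde} that, for $\xi>0$,
\[
\theta(\xi)=\tilde\gamma(\tilde s(\xi),\xi)=k\mu+\frac{\kappa\mu}{\xi}+\frac{\mu}{\xi}\int_{\tilde s(\xi)}^{\tilde s(\xi)+\xi}g_0(y)\,\mathrm dy .
\]
Since $\tilde s$ and $\tilde S=\tilde s+\xi$ are differentiable on $(0,\infty)$ by Lemma~\ref{lem:sQ}, I would differentiate, using $\tilde S'=\tilde s'+1$ and, crucially, the defining identity $g_0(\tilde s(\xi))=g_0(\tilde S(\xi))$ from \eqref{eq:g0-equal}: the two boundary terms $g_0(\tilde S)\tilde S'-g_0(\tilde s)\tilde s'$ collapse to $g_0(\tilde s(\xi))$ because the $\tilde s'$ contributions cancel. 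After collecting the $1/\xi^2$ terms this should give
\[
\theta'(\xi)=\frac{\mu}{\xi^{2}}\Big(-\kappa+\int_{\tilde s(\xi)}^{\tilde S(\xi)}\big(g_0(\tilde s(\xi))-g_0(y)\big)\,\mathrm dy\Big)=\frac{\mu}{\xi^{2}}\big(L(\xi)-\kappa\big),
\]
so the sign of $\theta'(\xi)$ is exactly the sign of $L(\xi)-\kappa$.

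It then remains to analyze $L$. By \eqref{eq:sQ} we have $\tilde s(\xi)<z^\star<\tilde S(\xi)$, and by \eqref{eq:z-star} the integrand $g_0(\tilde s(\xi))-g_0(y)$ is nonnegative on $[\tilde s(\xi),\tilde S(\xi)]$ (it equals $g_0(\tilde s(\xi))-g_0(y)\ge 0$ since $g_0$ lies below its common boundary value $g_0(\tilde s(\xi))=g_0(\tilde S(\xi))$ throughout the interval), and strictly positive on the interior; hence $L(\xi)>0$ for $\xi>0$. Next, $L$ is continuous on $(0,\infty)$ since $\tilde s,\tilde S,g_0$ are continuous. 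For the limits: as $\xi\downarrow 0$, by continuity $\tilde s(\xi),\tilde S(\xi)\to z^\star$, the interval shrinks to a point, and $L(\xi)\to 0$; as $\xi\to\infty$, \eqref{eq:s-tilde-infinity} gives $\tilde s(\xi)\to-\infty$, $\tilde S(\xi)\to\infty$, and \eqref{eq:g0-limit} forces the integrand to grow without bound on an interval of length $\to\infty$, so $L(\xi)\to\infty$. By the intermediate value theorem there exists $\hat\xi>0$ with $L(\hat\xi)=\kappa$. To get uniqueness and the sign pattern \eqref{eq:theta-monotonicity} in one stroke, I would show $L$ is strictly increasing: differentiating $L$, the boundary terms again simplify using $g_0(\tilde s)=g_0(\tilde S)$ (the integrand vanishes at both endpoints), leaving $L'(\xi)=g_0'(\tilde s(\xi))\,\tilde s'(\xi)\,(\tilde S(\xi)-\tilde s(\xi))=g_0'(\tilde s(\xi))\,\tilde s'(\xi)\,\xi$, which is strictly positive because $\tilde s(\xi)<z^\star$ gives $g_0'(\tilde s(\xi))<0$ by \eqref{eq:z-star} and $\tilde s'(\xi)<0$ by \eqref{eq:s-tilde-prime}. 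Strict monotonicity of $L$ yields uniqueness of $\hat\xi$ and shows $L(\xi)<\kappa$ for $0<\xi<\hat\xi$ and $L(\xi)>\kappa$ for $\xi>\hat\xi$; combined with $\theta'(\xi)=(\mu/\xi^2)(L(\xi)-\kappa)$ this is precisely \eqref{eq:theta-monotonicity}.

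The step I expect to require the most care is the differentiation of $\theta$ (and of $L$): one must justify differentiating under the integral sign with variable limits—this is routine Leibniz-rule bookkeeping given the differentiability of $\tilde s,\tilde S$ from Lemma~\ref{lem:sQ} and the smoothness of $g_0$ inherited from \eqref{itm:H3}—but the essential point is the algebraic cancellation that makes the endpoint-derivative terms collapse via $g_0(\tilde s(\xi))=g_0(\tilde S(\xi))$. Once that cancellation is in hand, everything else follows from the already-established monotonicity of $g_0$ on either side of $z^\star$ (Lemma~\ref{lem:g0}) and of $\tilde s$ (Lemma~\ref{lem:sQ}).
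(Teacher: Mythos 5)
Your proposal is correct and follows essentially the same route as the paper: differentiate $\theta(\xi)=k\mu+\kappa\mu/\xi+(\mu/\xi)\int_{\tilde s(\xi)}^{\tilde S(\xi)}g_0(y)\,\mathrm{d}y$, use $g_0(\tilde s(\xi))=g_0(\tilde S(\xi))$ to collapse the boundary terms to $g_0(\tilde s(\xi))$, obtain $\theta'(\xi)=\mu\big(L(\xi)-\kappa\big)/\xi^2$, and conclude from the strict monotonicity of $L$ together with $L(0+)=0$ and $L(\xi)\to\infty$. The only difference is cosmetic: you make the monotonicity of $L$ explicit via $L'(\xi)=\xi\,g_0'(\tilde s(\xi))\,\tilde s'(\xi)>0$, whereas the paper asserts it directly from \eqref{eq:g0-equal} and \eqref{eq:s-tilde-prime}; your computation is a correct and slightly more detailed justification of the same fact.
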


Let $K_{i}$ be the smallest one of $K_{1},\ldots,K_{N}$. By Lemma~\ref{lem:triple}, $\hat{\nu}_{i}$ is the smallest one of $\hat{\nu}_{1},\ldots,\hat{\nu}_{N}$. If $ K(\hat{\xi}_{i}) = K_{i} $ happens to hold, $U(\hat{s}_{i}, \hat{S}_{i})$ must be the optimal $(s,S)$ policy. The setup cost function in \eqref{eq:setupcost}, however, has imposed a constraint on order quantities for each setup cost value. When the setup cost is $K_{n}$, the quantity of an order is confined to an interval from $Q_{n-1}$ to $Q_{n}$ (which, by \eqref{eq:setupcost}, may be $(Q_{n-1},Q_{n})$, $(Q_{n-1},Q_{n}]$, $[Q_{n-1},Q_{n})$, or $[Q_{n-1},Q_{n}]$). If $\hat{\xi}_{i} < Q_{i-1}$ or $\hat{\xi}_{i} > Q_{i}$, we have $K(\hat{\xi}_{i}) \neq K_{i}$, and in either case, $U(\hat{s}_{i}, \hat{S}_{i})$ may not be optimal.

With a quantity-dependent setup cost, it is necessary to examine whether each $\hat{\xi}_{n}$ falls into the interval $(Q_{n-1},Q_{n})$; if not, we should adjust the order quantity to make it conform with \eqref{eq:setupcost}. At step~\ref{itm:step-3}, based on the relative position of $\hat{\xi}_{n}$ to the interval $(Q_{n-1},Q_{n})$, we define $\xi_{n}^{\star}$ as the point in $[Q_{n-1},Q_{n}]$ that is the closest to $\hat{\xi}_{n}$. By Lemma~\ref{lem:theta-kappa}, $\xi_{n}^{\star}$ is the optimal quantity when each order is confined in $[Q_{n-1},Q_{n}]$ with setup cost $K_{n}$. Consequently, one of $\xi_{1}^{\star},\ldots, \xi_{N}^{\star}$ must be the optimal order quantity for the setup cost given by \eqref{eq:setupcost}. We may thus seek the optimal policy by examining the policies that fix order quantities at $\xi_{n}^{\star}$ for $n=1,\ldots,N$. In this procedure, rather than examining all of $\xi_{1}^{\star},\ldots,\xi_{N}^{\star}$, we may just investigate those in the candidate index set $\mathcal{N}$ defined by \eqref{eq:candidate-set}. We will discuss the candidate index set shortly.

For $n\in\mathcal{N}$, we obtain the optimal $(s,S)$ policy with the quantity of each order fixed at $\xi_{n}^{\star}$. This task is carried out at step~\ref{itm:step-4}, where the reorder and order-up-to levels are given by $(s_{n},S_{n})$ and the resulting long-run average cost is given by $\nu_{n}$. When the quantity of each order is fixed at $\xi_{n}^{\star}$ with setup cost $K_{n}$, $U(\hat{s}_{n}, \hat{S}_{n})$ must be the optimal policy for $n\in\mathcal{N}_{=}$, and the long-run average cost is equal to $\hat{\nu}_{n}$. We may thus define $(s_{n}, S_{n}, \nu_{n})$ for $n \in \mathcal{N}_{=}$ by \eqref{eq:hat-s-S-A}. By Lemmas~\ref{lem:sQ}--\ref{lem:theta}, we can obtain $(s_{n}, S_{n}, \nu_{n})$ by solving \eqref{eq:selected-system}--\eqref{eq:gamma-n} for $n\in\mathcal{N} \setminus \mathcal{N}_{=}$. Note that not all of $\xi_{1}^{\star}, \ldots, \xi_{N}^{\star}$ are considered at step~\ref{itm:step-4}. The next lemma implies that it suffices to search for the optimal policy within the candidate index set $\mathcal{N}$. To state this lemma, let us define 
\begin{equation}
\underaccent{\bar}{\chi}(n) = \max\{j\in\mathcal{N} : j < n\}
\quad\mbox{for }n\in\mathcal{N}_{<}\setminus\mathcal{N}
\label{eq:chi-1}
\end{equation}
and
\begin{equation}
\bar{\chi}(n)  = \min\{j\in\mathcal{N} : j > n\}
\quad \mbox{for }
n\in\mathcal{N}_{>}\setminus\mathcal{N}.
\label{eq:chi-2}
\end{equation}
For $ n = 1, \ldots, N $, put 
\begin{equation}
\tilde{\nu}_{n} = \theta_{n}(\xi_{n}^{\star})
\label{eq:bar-gamma}
\end{equation}
where
\begin{equation}
\theta_{n}(\xi) = k\mu +  \frac{K_{n}\mu}{\xi} + \frac{\mu}{\xi} \int_{\tilde{s}(\xi)}^{\tilde{S}(\xi)} g_{0}(y)\,\mathrm{d}y
\quad \mbox{for $ \xi > 0 $.}
\label{eq:theta-n}
\end{equation} 
Note that $ \nu_{n} = \tilde{\nu}_{n} $ for $ n \in \mathcal{N} $.

\begin{lemma}
\label{lem:chi}
Assume that the setup cost	function in \eqref{eq:setupcost} has $K_{1}>0$ and that $h$ satisfies \textnormal{(\ref{itm:H1})--(\ref{itm:H5})}. Then, for each $n \in	\mathcal{N}_{<} \setminus \mathcal{N}$, $\underaccent{\bar}{\chi}(n)$ defined	by \eqref{eq:chi-1} exists and satisfies $ \nu_{\underaccent{\bar}{\chi}(n)} < \tilde{\nu}_{n}$; for each $n\in \mathcal{N}_{>} \setminus \mathcal{N}$, $\bar{\chi}_{n}$ defined by \eqref{eq:chi-2} exists and satisfies $\nu_{\bar{\chi}(n)} < \tilde{\nu}_{n} $.
\end{lemma}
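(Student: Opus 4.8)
The plan is to prove the two halves by an index-walking argument that rests on the unimodality of the constant-setup-cost average cost function. In the notation of \eqref{eq:theta-n}, $\theta_n$ is exactly the function $\theta$ of \eqref{eq:theta-definition} for the constant setup cost $K_n$ (the infimum over $s$ being attained at $\tilde{s}(\xi)$ by Lemma~\ref{lem:theta}), and $\tilde{\nu}_n=\theta_n(\xi_n^{\star})$, with $\nu_n=\tilde{\nu}_n$ for $n\in\mathcal{N}$. Two elementary facts drive everything. First, $\theta_n(\xi)-\theta_m(\xi)=(K_n-K_m)\mu/\xi$ for $\xi>0$, so $K_n<K_m$ implies $\theta_n<\theta_m$ pointwise on $(0,\infty)$. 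Second, $\theta_n$ is strictly decreasing on $(0,\hat{\xi}_n)$ and strictly increasing on $(\hat{\xi}_n,\infty)$: for $K_n>0$ this is Lemma~\ref{lem:theta-kappa}, and for $K_n=0$ one has $\hat{\xi}_n=0$ by Lemma~\ref{lem:triple} together with $\theta_n'(\xi)=\mu L(\xi)/\xi^{2}>0$, $L$ as in Lemma~\ref{lem:theta-kappa}. I will also use that, by \eqref{eq:setupcost}, the setup cost at a breakpoint is $K(Q_j)=K_j\wedge K_{j+1}$; that $\mathcal{N}_{=}\subseteq\mathcal{N}$ (since $\xi_n^{\star}=\hat{\xi}_n\in(Q_{n-1},Q_n)$ there, so $K(\xi_n^{\star})=K_n$); and that $1\notin\mathcal{N}_{<}$ and $N\notin\mathcal{N}_{>}$, because $\hat{\xi}_1>0=Q_0$ as $K_1>0$ and $\hat{\xi}_N<\infty=Q_N$.

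Consider $n\in\mathcal{N}_{<}\setminus\mathcal{N}$. Then $\xi_n^{\star}=Q_{n-1}$ and $\hat{\xi}_n\leq Q_{n-1}$, and since $K(Q_{n-1})=K_{n-1}\wedge K_n\neq K_n$ we get $K_{n-1}<K_n$ (in particular $n\geq 2$). I descend: put $n_0=n$ and $n_{i+1}=n_i-1$ as long as $n_i\in\mathcal{N}_{<}\setminus\mathcal{N}$, obtaining a finite strictly decreasing chain $n=n_0>n_1>\dots>n_r=:j$ with $n_0,\dots,n_{r-1}\in\mathcal{N}_{<}\setminus\mathcal{N}$ and $n_r\notin\mathcal{N}_{<}\setminus\mathcal{N}$. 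Since the complement of $\mathcal{N}_{<}\setminus\mathcal{N}$ is $\mathcal{N}\cup(\mathcal{N}_{=}\cup\mathcal{N}_{>})=\mathcal{N}\cup\mathcal{N}_{>}$, either $j\in\mathcal{N}$ directly, or $j\in\mathcal{N}_{>}\setminus\mathcal{N}$; but in the latter case $j=n_{r-1}-1$ gives $K_j=K_{n_{r-1}-1}<K_{n_{r-1}}=K_{j+1}$, hence $K(Q_j)=K_j\wedge K_{j+1}=K_j$ and so $j\in\mathcal{N}$, a contradiction. Thus $j\in\mathcal{N}$ and $j<n$, so $\underaccent{\bar}{\chi}(n)$ exists, and since $n_1,\dots,n_{r-1}\notin\mathcal{N}$ it equals $j$. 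For the cost bound I chain $\nu_j<\tilde{\nu}_{n_{r-1}}<\tilde{\nu}_{n_{r-2}}<\dots<\tilde{\nu}_{n_0}=\tilde{\nu}_n$: for consecutive chain members, $\tilde{\nu}_{n_{i+1}}=\theta_{n_{i+1}}(Q_{n_{i+1}-1})<\theta_{n_{i+1}}(Q_{n_{i+1}})<\theta_{n_i}(Q_{n_{i+1}})=\tilde{\nu}_{n_i}$, the first inequality from $\hat{\xi}_{n_{i+1}}\leq Q_{n_{i+1}-1}<Q_{n_{i+1}}$ and unimodality, the second from $K_{n_{i+1}}<K_{n_i}$; and the terminal inequality $\nu_j<\tilde{\nu}_{n_{r-1}}$ follows by splitting on $j\in\mathcal{N}_{=}$, $j\in\mathcal{N}_{>}$, or $j\in\mathcal{N}_{<}\cap\mathcal{N}$ and in each case bounding $\nu_j=\theta_j(\xi_j^{\star})$ by $\theta_j$ evaluated at a point $\leq Q_j$, then passing from $\theta_j$ to $\theta_{n_{r-1}}$ via $K_j<K_{n_{r-1}}$ (recalling $Q_j=Q_{n_{r-1}-1}=\xi_{n_{r-1}}^{\star}$).

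For $n\in\mathcal{N}_{>}\setminus\mathcal{N}$ the argument is the mirror image: $\xi_n^{\star}=Q_n$, $\hat{\xi}_n\geq Q_n$, and $K(Q_n)=K_n\wedge K_{n+1}\neq K_n$ forces $K_{n+1}<K_n$, so $n\leq N-1$. I ascend, setting $n_{i+1}=n_i+1$ while $n_i\in\mathcal{N}_{>}\setminus\mathcal{N}$; the chain terminates at $j=\bar{\chi}(n)\in\mathcal{N}$ with $j>n$, the only alternative $j\in\mathcal{N}_{<}\setminus\mathcal{N}$ being excluded because then $K(Q_{j-1})=K_{j-1}\wedge K_j=K_j$ (as $K_j<K_{j-1}$), forcing $j\in\mathcal{N}$; and the same telescoping, now using that each $\theta_n$ is strictly decreasing to the left of $\hat{\xi}_n$, yields $\nu_{\bar{\chi}(n)}<\tilde{\nu}_n$.

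The work here is essentially bookkeeping rather than conceptual: one must track precisely which subinterval of $[Q_{j-1},Q_j]$ the relevant $\xi_j^{\star}$ sits in, check that every inequality can be taken strict (using that $g_0$ is nonconstant on the relevant ranges, hence so are $L$ and each $\theta_n$), and — the one genuinely structural step — confirm that the descending (resp.\ ascending) chain cannot escape into $\mathcal{N}_{>}\setminus\mathcal{N}$ (resp.\ $\mathcal{N}_{<}\setminus\mathcal{N}$). This last point is exactly where the sign pattern $K_{n-1}<K_n$ (resp.\ $K_{n+1}<K_n$) inherited along the chain and the breakpoint identity $K(Q_j)=K_j\wedge K_{j+1}$ combine, and it is the only place the special structure of the step function \eqref{eq:setupcost} is really exploited.
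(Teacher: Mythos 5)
Your proof is correct and follows essentially the same route as the paper: the same downward/upward induction on indices in $\mathcal{N}_<\setminus\mathcal{N}$ (resp.\ $\mathcal{N}_>\setminus\mathcal{N}$) using $K(Q_j)=K_j\wedge K_{j+1}$ and $1\notin\mathcal{N}_<$, $N\notin\mathcal{N}_>$ to locate $\underaccent{\bar}{\chi}(n)$ or $\bar{\chi}(n)$, and the same two ingredients (the pointwise ordering $\theta_m<\theta_n$ when $K_m<K_n$ and the unimodality of $\theta_n$ from Lemma~\ref{lem:theta-kappa}) for the cost bound. The only cosmetic differences are that you telescope the inequality along the whole chain where the paper compares $\theta_n$ and $\theta_{\underaccent{\bar}{\chi}(n)}$ at the single point $Q_{n-1}$ in one step, and that you treat the case $j\in\mathcal{N}_>$ directly rather than excluding it via the monotonicity of $\hat{\xi}$ in $\kappa$ from Lemma~\ref{lem:triple}.
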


It follows from Lemma~\ref{lem:chi} that $\nu^{\star} \leq \tilde{\nu}_{n}$ for all $n=1,\ldots, N$, where $\nu^{\star}$ is given by \eqref{eq:star} at step~\ref{itm:step-5}. Hence, $U(s^{\star},S^{\star})$ is the best candidate of the policies obtained by steps~\ref{itm:step-1}--\ref{itm:step-4}. Now let us prove the optimality of the obtained ordering policy.

\begin{proof}[Proof of Corollary~\ref{crly:algorithm}]
It suffices to show that $ (s^{\star},S^{\star}) $ obtained by steps~\ref{itm:step-1}--\ref{itm:step-5} satisfies \eqref{eq:s*S*}. Then, the corollary follows from Theorem~\ref{thm:main-results}.
	
Without loss of generality, we may assume $ K(\xi) = 0 $ for $ \xi \geq 0 $ if $ K_{1} = 0 $. Then, it follows from Lemma~\ref{lem:triple} that $ \hat{\xi} = 0 $ and thus $ \tilde{s}(0) = \tilde{S}(0) = z^{\star} $ is the optimal base stock level. Hence, $ s^{\star} = S^{\star} = z^{\star} $ and by \eqref{eq:gamma} and \eqref{eq:optimal-barrier}, $ \nu^{\star} = k\mu + h(z^{\star})$.
	
Consider the case $ K_{1} > 0$. If $ s = S $, it follows from \eqref{eq:gamma} that $ \gamma(s,S) = \infty $ because $ \ell = \infty $ by \eqref{eq:ell}. If $ s < S $, put $ \xi = S-s $ and assume that $ K(\xi) = K_{n} $ for $ n = 1,\ldots, N $. By \eqref{eq:Q-star}, \eqref{eq:theta-xi-hat}, and \eqref{eq:theta-monotonicity}, we obtain $ \theta_{n}(\xi) \geq \theta_{n}(\xi_{n}^{\star}) $, where $ \theta_{n} $ is given by \eqref{eq:theta-n}. If $ n \in \mathcal{N} $,
\[  
\gamma (s,S) \geq \theta_{n}(\xi) \geq \theta_{n}(\xi_{n}^{\star}) = \nu_{n} \geq \nu_{n^{\star}},
\]
where the first inequality follows from  \eqref{eq:gamma-tilde-definition} and \eqref{eq:theta-definition}, the first equality follows from \eqref{eq:g0-equal} and \eqref{eq:selected-system}--\eqref{eq:gamma-n}, and the last inequality follows from \eqref{eq:n-star}. If $ n \not\in \mathcal{N}$, we obtain
\[  
\gamma (s,S) \geq \theta_{n}(\xi) \geq \theta_{n}(\xi_{n}^{\star}) = \tilde{\nu}_{n} \geq \nu_{n^{\star}},
\]
where the equality follows from \eqref{eq:bar-gamma} and the last inequality follows from Lemma~\ref{lem:chi}.
\end{proof}

\section{Conclusion}
\label{sec:conclusion}

The optimality of $(s,S)$ policies for inventory systems with constant setup costs is a fundamental result in inventory theory. Assuming a Brownian demand process, we have extended the optimality of $(s,S)$ policies to stochastic inventory models with a general setup cost structure. To achieve this, we proved a comparison theorem that allows one to investigate the optimal policy within a tractable subset of admissible policies. When the setup cost is a step function, we proposed a policy selection procedure for obtaining the optimal control parameters. These results have improved the widely used lower bound approach for solving Brownian control problems and may apply to inventory models with even more general stochastic demand process, e.g., mean-reverting diffusions (see \citealt{CadenillasETAL10}) and spectrally positive L\'{e}vy processes (see \citealt{Kyprianou06} and \citealt{KuznetsovETAL12}). We look forward to exploring these extensions in future work.

\renewcommand{\theequation}{A.\arabic{equation}}
\setcounter{equation}{0}

\appendix

\section*{Technical proofs}

\begin{proof}[Proof of Lemma~\ref{lem:Z-dominance}]
By Lemma~\ref{lem:Skorokhod-Map}, $Z^{m}(t)\geq m$ for $t\geq 0$, so $Z(t)\leq Z^{m}(t)$ whenever $Z(t)\leq m$. For a fixed $t\geq 0$, if $Z(u) > m$ for all $u\in[0,t]$, we must have $Y(t)=0$ and thus $X(u)>m$ for all $u\in[0,t]$. It follows that $Y^{m}(t)=0$ and thus $Z(t)= Z^{m}(t)=X(t)$. If $Z(t)>m$ but there exits some $u\in[0,t)$ such that $Z(u)\leq m$, we put $t_{0}=\sup\{u\in[0,t): Z(u)\leq m\}$. We deduce that $Z(t_{0})\leq m$ because otherwise, $Z(t_{0})>m$ and $Z(t_{0}-)\leq m$, which contradicts the assumption that $Y\in\mathcal{U}_{m}$. Hence, $Z(t_{0})\leq Z^{m}(t_{0})$ and $t_{0}<t$. Because $Z(u)>m$ for $u\in(t_{0},t]$, $Y(t)-Y(t_{0})=0$. By \eqref{eq:Z}--\eqref{eq:X}, $Z(t)=Z(t_{0})+X(t)-X(t_{0})$. Because $Y^{m}$ has nondecreasing sample paths, 
\[
Z^{m}(t)=Z^{m}(t_{0})+X(t)-X(t_{0})+Y^{m}(t)-Y^{m}(t_{0})\geq Z(t).
\]
Therefore, $Z(t)\leq Z^{m}(t)$ for all $t\geq 0$.
\end{proof}
	
\begin{proof}[Proof of Lemma~\ref{lem:Exf}]
It suffices to consider $Y\in\mathcal{U}_{m}$ for a fixed positive integer $m$. Let $Z^{m}$ be the inventory process given by \eqref{eq:Z-m}, which is a reflected Brownian motion with lower reflecting barrier at $m$. Let $\alpha$ be a positive number. By \eqref{eq:psi},
\begin{align*}
\mathbb{E}_{x}[Z^{m}(t)^{\alpha}] &  =\alpha\int_{0}^{\infty}v^{\alpha-1}\psi_{x}^{m} (v,t)\,\mathrm{d}v
\leq \alpha\int_{0}^{x\vee m}v^{\alpha-1}\,\mathrm{d}v+\alpha\int_{x\vee m}^{\infty}v^{\alpha-1}\psi_{x}^{m}(v,t)\,\mathrm{d}v\\
&  \leq (x\vee m)^{\alpha}+\alpha\int_{x\vee m}^{\infty}v^{\alpha-1}\Phi \Big(  \frac{-v+(x\vee m)-\mu t}{\sigma t^{1/2}}\Big)  \,\mathrm{d}v+\alpha\int_{x\vee m}^{\infty}v^{\alpha-1} \mathrm{e}^{-\lambda(v-m)}\,\mathrm{d}v.
\end{align*}
For $t>0$ and $v>x\vee m$, we obtain
\[
\frac{-v+(x\vee m)-\mu t}{\sigma t^{1/2}} = -\Big(\frac{v-(x\vee m)}{\sigma t^{1/2}} + \frac{\mu t^{1/2}}{\sigma}\Big)\leq-\frac{2\mu^{1/2}\big(v-(x\vee m)\big)^{1/2}}{\sigma}
\]
using the inequality of arithmetic and geometric means. Therefore,
\[
\mathbb{E}_{x}[Z^{m}(t)^{\alpha}]\leq (x\vee m)^{\alpha} + \alpha\int_{x\vee m}^{\infty} v^{\alpha-1}\Phi\Big(  -\frac{2\mu^{1/2}\big(v-(x\vee m)\big)^{1/2}}{\sigma}\Big) \,\mathrm{d}v+\alpha\int_{x\vee m}^{\infty}v^{\alpha-1} \mathrm{e}^{-\lambda(v-m)}\,\mathrm{d}v
\]
for $t\geq 0$. All terms on the right side are finite and none of them depend on $t$, so
\begin{equation}
\sup_{t\geq 0} \mathbb{E}_{x}[Z^{m}(t)^{\alpha}] <
\infty\quad\mbox{for }\alpha>0.
\label{eq:ExZ-bound}
\end{equation}
	
Because $X(t)\leq Z(t)\leq Z^{m}(t)$ for $t\geq 0$,
\begin{equation}
|Z(t)|^{\alpha}\leq |X(t)|^{\alpha}+Z^{m}(t)^{\alpha}.
\label{eq:Z-bound}
\end{equation}
Since $X(t)$ follows a Gaussian distribution with mean $x-\mu t$ and variance $\sigma^{2}$,
\begin{equation}
\sup_{0\leq u\leq t}\mathbb{E}_{x}[|X(u)|^{\alpha}]<\infty\quad\mbox{for }t
\geq 0.
\label{eq:ExX}
\end{equation}
By \eqref{eq:fprime}, there exist $c_{0}>0$ and $c_{1}>0$ such that
\begin{equation}
|f(z)|<c_{0}+c_{1}|z|^{d+1}\quad\mbox{for }z\in\mathbb{R}.
\label{eq:f-bound}
\end{equation}
Then, we deduce that \eqref{eq:ExfZ1} holds from \eqref{eq:ExZ-bound}--\eqref{eq:f-bound} and that \eqref{eq:ExfZ2} holds from \eqref{eq:fprime}, \eqref{eq:ExZ-bound}--\eqref{eq:ExX}, and Tonelli's theorem. Since $Z(t)\leq Z^{m}(t)$ and $Z^{m}(t)\geq m$, it follows from \eqref{eq:f-bound} that
\[
\big|f(Z(t))\cdot 1_{[0,\infty)}(Z(t))\big| \leq c_{0}+c_{1} Z^{m}(t)^{d+1},
\]
which, along with \eqref{eq:ExZ-bound}, implies that \eqref{eq:limsup-fz} holds.
\end{proof}
	
\begin{proof}[Proof of Lemma~\ref{lem:g0}]
By (\ref{itm:H2})--(\ref{itm:H4}), $ \lim_{z\rightarrow\pm\infty}h(y+z)=\infty $, from which \eqref{eq:g0-limit} follows. By \eqref{eq:g0-prime},
\[
g_{0}^{\prime\prime}(z)=\frac{\lambda^{3}}{\mu}\int_{0}^{\infty} h(y+z) \mathrm{e}^{-\lambda y} \,\mathrm{d}y - \frac{\lambda^{2}}{\mu}h(z) - \frac{\lambda}{\mu} h^{\prime}(z) \quad\mbox{for $ z\neq0 $.}
\]
We would show that
\begin{equation}
g_{0}^{\prime}(z) > 0 \quad\mbox{for $ z\geq0 $,}
\label{eq:g-prime-1}
\end{equation}
\begin{equation}
g_{0}^{\prime\prime}(z)   >0\quad\mbox{for $ z<0 $,}
\label{eq:g-prime-2}
\end{equation}
and
\begin{equation}
\limsup_{z\rightarrow-\infty}g_{0}^{\prime}(z)<0.
\label{eq:g-prime-3}
\end{equation}
By these conditions and the continuity of $g_{0}^{\prime}$, we obtain \eqref{eq:z-star} with $z^{\star}<0$. Moreover, $z^{\star}$ is unique.
		
Write \eqref{eq:g0-prime} into
\begin{equation}
g_{0}^{\prime}(z)=\frac{\lambda^{2}}{\mu}\int_{0}^{\infty} (h(y+z)-h(z))\mathrm{e}^{-\lambda y}\,\mathrm{d}y.
\label{eq:g0-prime-2}
\end{equation}
Then, condition \eqref{eq:g-prime-1} follows from (\ref{itm:H4}). By (\ref{itm:H1}) and integration by parts,
\[
\int_{0}^{-z}h(y+z)\mathrm{e}^{-\lambda y}\,\mathrm{d}y=\frac{h(z)}{\lambda }+\frac{1}{\lambda}\int_{0}^{-z}h^{\prime}(y+z)\mathrm{e}^{-\lambda y}\,\mathrm{d}y\quad\mbox{for }z<0.
\]
It follows that
\[
g_{0}^{\prime\prime}(z)> \frac{\lambda^{3}}{\mu}\int_{0}^{-z}h(y+z) \mathrm{e}^{-\lambda y} \,\mathrm{d}y-\frac{\lambda^{2}}{\mu}h(z)-\frac{\lambda }{\mu}h^{\prime}(z) = \frac{\lambda^{2}}{\mu}\int_{0}^{-z}h^{\prime} (y+z)\mathrm{e}^{-\lambda y}\,\mathrm{d}y-\frac{\lambda}{\mu} h^{\prime}(z).
\]
Since $h$ is convex, $h^{\prime}(y)>h^{\prime}(z)$ for $y>z$. By (\ref{itm:H4}),
\[
g_{0}^{\prime\prime}(z)\geq\frac{\lambda^{2}}{\mu}\int_{0}^{-z}h^{\prime}(z) \mathrm{e}^{-\lambda y}\,\mathrm{d}y-\frac{\lambda }{\mu} h^{\prime}(z) = -\frac{\lambda }{\mu}h^{\prime}(z)\mathrm{e}^{\lambda z}>0,
\]
so \eqref{eq:g-prime-2} holds. By (\ref{itm:H2}) and (\ref{itm:H4}), there exist $z_{0}<0$ and $c_{0}>0$ such that $h^{\prime }(z)<-c_{0}$ for all $z<z_{0}$. Because $h$ is polynomially bounded,
\[
\lim_{z\rightarrow-\infty}\int_{z_{0}-z}^{\infty}(h(y+z)-h(z))\mathrm{e} ^{-\lambda y}\,\mathrm{d}y=0.
\]
Then by \eqref{eq:g0-prime-2},
\[
\limsup_{z\rightarrow-\infty}g_{0}^{\prime}(z)=\limsup_{z\rightarrow-\infty} \frac{\lambda^{2}}{\mu}\int_{0}^{z_{0}-z}(h(y+z) - h(z))\mathrm{e}^{-\lambda y}\,\mathrm{d}y\leq - \lim_{z\rightarrow-\infty}\frac{\lambda^{2}c_{0}}{\mu}\int_{0}^{z_{0}-z} \mathrm{e}^{-\lambda y}y\,\mathrm{d}y=-\frac{c_{0}}{\mu},
\]
which leads to \eqref{eq:g-prime-3}. 
\end{proof}

\begin{proof}[Proof of Lemma~\ref{lem:sQ}]
For $ \xi > 0 $ and $ s \in \mathbb{R} $, put
\[
G(s,\xi)=g_{0}(s+\xi)-g_{0}(s) = \int_{s}^{s+\xi}g'_{0}(y)\,\mathrm{d}y.
\]
By \eqref{eq:g0}, \eqref{eq:g0-prime}, and (\ref{itm:H3}), $ G $ is continuously differentiable on $ \mathbb{R} \times (0, \infty) $. If $G(s,\xi) = 0$, we must have $ s < z^{\star} < s+\xi $ by \eqref{eq:z-star}. Let $ \xi > 0 $ be fixed. Then, $G(s,\xi)$ is continuous and strictly increasing in $s$ on $[z^{\star}-\xi,z^{\star}]$, with $ G(z^{\star}-\xi, \xi) < 0 $ and $G(z^{\star}, \xi) > 0$. Hence, there exists a unique $ s = \tilde{s}(\xi) $ such that $ G(s, \xi) = 0 $, by which we deduce that both \eqref{eq:g0-equal} and \eqref{eq:sQ} hold. The limits in  \eqref{eq:s-tilde-infinity} follows from \eqref{eq:g0-limit} and \eqref{eq:g0-equal}. Using \eqref{eq:z-star} again, we obtain
\[  
\frac{\partial }{\partial s} G ( \tilde{s}(\xi), \xi ) =  g^{\prime}_{0}(\tilde{S}(\xi))-g^{\prime}_{0}(\tilde{s}(\xi)) > 0 \quad \mbox{for $ \xi > 0 $.}
\]
By the implicit function theorem (see, e.g., Theorem~11.1 in \citealt{Protter98}), $ \tilde{s} $ must be a differentiable function on $ (0, \infty) $. By \eqref{eq:z-star}--\eqref{eq:sQ},
\[ 
\frac{\partial}{\partial \xi} G (\tilde{s}(\xi), \xi) = g_{0}^{\prime} (\tilde{s}(\xi) + \xi) = g_{0}^{\prime}(\tilde{S}(\xi)) > 0.
\]
The implicit function theorem also implies that
\[  
\tilde{s}^{\prime}(\xi) = - \frac{\partial G (\tilde{s}(\xi), \xi) / \partial \xi }{\partial G (\tilde{s}(\xi), \xi) / \partial s} = - \frac{g^{\prime}_{0}(\tilde{S}(\xi))}{g^{\prime}_{0}(\tilde{S}(\xi))-g^{\prime}_{0}(\tilde{s}(\xi))} < 0.
\]
It follows from \eqref{eq:z-star}--\eqref{eq:sQ} that
\[  
\tilde{S}^{\prime}(\xi) = \tilde{s}^{\prime}(\xi) + 1 = - \frac{g^{\prime}_{0}(\tilde{s}(\xi))}{g^{\prime}_{0}(\tilde{S}(\xi))-g^{\prime}_{0}(\tilde{s}(\xi))} > 0.
\]
\end{proof}
	
\begin{proof}[Proof of Lemma~\ref{lem:theta}]
By \eqref{eq:z-star} and \eqref{eq:gamma-tilde}, $ \theta(0) = \tilde{\gamma}(z^{\star}, 0) = \tilde{\gamma} ( \tilde{s}(0), 0 ) $, so \eqref{eq:theta} holds for $ \xi = 0 $. For $ \xi > 0 $, Lemma~\ref{lem:sQ} implies that $ s = \tilde{s}(\xi) $ is the unique solution to $ \partial \tilde{\gamma}(s,\xi) / \partial s = 0$. By \eqref{eq:z-star}--\eqref{eq:sQ}, \[  
\frac{\partial^{2} }{\partial s^{2}} \tilde{\gamma}( \tilde{s}(\xi), \xi ) = \frac{\mu}{\xi} g^{\prime}_{0}(\tilde{S}(\xi)) - \frac{\mu}{\xi} g^{\prime}_{0}(\tilde{s}(\xi)) > 0,
\]
so \eqref{eq:theta} also holds for $ \xi > 0 $. 
		
By \eqref{eq:ell} and \eqref{eq:gamma-tilde},
\[  
\liminf_{\xi \downarrow 0} \theta (\xi) = (k + \ell)\mu + \liminf_{\xi \downarrow 0} \frac{\mu}{\xi} \int_{\tilde{s}(\xi)}^{\tilde{S}(\xi)} g_{0}(y)\,\mathrm{d}y.
\]
Since \eqref{eq:sQ} implies that $ \tilde{s}(0+) = z^{\star} $, we obtain $ \liminf_{\xi \downarrow 0} \theta (\xi) = \theta(0) $, so $ \theta $ is lower semicontinuous at zero. For $ \tilde{\xi} > 0 $, since $ K $ is lower semicontinuous, by Proposition~B.1 in \citet{Puterman94},
\[  
\frac{K(\tilde{\xi})}{\tilde{\xi}} \leq \liminf_{\xi \rightarrow \tilde{\xi}} \frac{K(\xi)}{\xi}.
\]
By Lemma~\ref{lem:sQ}, $ \tilde{s} $ is continuous on $ [0,\infty) $, by which we obtain
\[  
\int_{\tilde{s}(\tilde{\xi})}^{\tilde{S}(\tilde{\xi})} g_{0}(y)\,\mathrm{d}y = \lim_{\xi \rightarrow \tilde{\xi}} \int_{\tilde{s}(\xi)}^{\tilde{S}(\xi)} g_{0}(y)\,\mathrm{d}y.
\]
It follows that $ \theta(\tilde{\xi}) \leq \liminf_{\xi \rightarrow \tilde{\xi}} \theta (\xi)$, and thus $ \theta $ is lower semicontinuous on $ [0,\infty) $.
		
By \eqref{eq:s-tilde-infinity} and L'H\^{o}pital's rule,
\[  
\lim_{\xi \rightarrow \infty} \frac{1}{\xi}  \int_{\tilde{s}(\xi)}^{\tilde{S}(\xi)} g_{0}(y)\,\mathrm{d}y = \lim_{\xi \rightarrow \infty} ( \tilde{s}^{\prime}(\xi)+1 ) g_{0}( \tilde{S}(\xi) ) - \lim_{\xi \rightarrow \infty} \tilde{s}^{\prime}(\xi) g_{0}( \tilde{s}(\xi) ).
\]
Then using \eqref{eq:g0-limit}, \eqref{eq:g0-equal}, and \eqref{eq:s-tilde-infinity}, we obtain
\[  
\lim_{\xi \rightarrow \infty} \frac{1}{\xi}  \int_{\tilde{s}(\xi)}^{\tilde{S}(\xi)} g_{0}(y)\,\mathrm{d}y = \lim_{\xi \rightarrow \infty} g_{0}( \tilde{S}(\xi) ) = \infty.
\]
Because the setup cost is nonnegative, the above limit implies that $ \lim_{\xi \rightarrow \infty} \theta(\xi) = \infty $. 
\end{proof}
	
\begin{proof}[Proof of Lemma~\ref{lem:bar-gamma}]
Put $ \bar{K} = \sup\{ K(\xi) : \xi > 0 \}$, which is finite by (\ref{itm:S2}). By \eqref{eq:theta-infinity}, there exists $ 0 < \bar{\xi} < \infty $ such that 
\begin{equation}
\theta(\bar{\xi}) > \frac{\bar{K}\mu}{\bar{\xi}} + \gamma(s^{\star},S^{\star}). 
\label{eq:theta-bar}
\end{equation}
Take $ \underaccent{\bar}{s} = \tilde{s}(\bar{\xi}) $. If $ s \geq \underaccent{\bar}{s} $, $ \bar{\gamma}(s,\xi) = \tilde{\gamma}(s,\xi) $ by \eqref{eq:gamma-tilde}. Then, \eqref{eq:gamma-tilde-bound} follows from \eqref{eq:s*S*} and \eqref{eq:gamma-tilde-definition}. 
		
It remains to prove \eqref{eq:gamma-tilde-bound} for $ s < \underaccent{\bar}{s} $, which relies on the following inequalities below deduced from \eqref{eq:z-star}--\eqref{eq:sQ}, 
\[
\begin{cases}
g_{0}(y) < g_{0}(\underaccent{\bar}{s}) = g_{0}(\underaccent{\bar}{s} + \bar{\xi} ) & \mbox{for $ \underaccent{\bar}{s} < y < \underaccent{\bar}{s}+\bar{\xi}$,} \\
g_{0}(y) > g_{0}(\underaccent{\bar}{s}) = g_{0}(\underaccent{\bar}{s} + \bar{\xi} ) & \mbox{for $ y < \underaccent{\bar}{s} $ or $ y > \underaccent{\bar}{s}+\bar{\xi}$.}
\end{cases}
\]
If $ s \leq \underaccent{\bar}{s} - \xi $, 
\begin{equation}
\frac{1}{\xi} \int_{s}^{s+\xi} g_{0}(y \vee \underaccent{\bar}{s})\,\mathrm{d}y =  g_{0}(\underaccent{\bar}{s}) > \frac{1}{\bar{\xi}} \int_{\underaccent{\bar}{s}}^{\underaccent{\bar}{s}+\bar{\xi}} g_{0}(y)\,\mathrm{d}y.
\label{eq:g0-bar}
\end{equation}
Then by \eqref{eq:theta} and \eqref{eq:theta-bar},
\[ 
\bar{\gamma}(s,\xi) > k\mu +\frac{K(\xi)\mu}{\xi} + \frac{\mu}{\bar{\xi}} \int_{\underaccent{\bar}{s}}^{\underaccent{\bar}{s}+\bar{\xi}} g_{0}(y)\,\mathrm{d}y = \theta(\bar{\xi}) +\frac{K(\xi)\mu}{\xi} - \frac{K(\bar{\xi})\mu}{\bar{\xi}} > \gamma(s^{\star}, S^{\star}). 
\]
If $ \underaccent{\bar}{s} - \xi < s < \underaccent{\bar}{s} \wedge (\underaccent{\bar}{s} + \bar{\xi} - \xi) $, 
\[  
\int_{s}^{s+\xi} g_0(y \vee \underaccent{\bar}{s})\,\mathrm{d}y = \int_{\underaccent{\bar}{s}}^{s+\xi} g_0(y)\,\mathrm{d}y + (\underaccent{\bar}{s} - s) g_{0}(\underaccent{\bar}{s}) > \int_{\underaccent{\bar}{s}}^{\underaccent{\bar}{s}+\xi} g_0(y)\,\mathrm{d}y,
\]
which implies that $ \bar{\gamma}(s,\xi) > \tilde{\gamma}(\underaccent{\bar}{s},\xi) $, and thus \eqref{eq:gamma-tilde-bound} follows. If $ \underaccent{\bar}{s} + \bar{\xi} - \xi \leq s < \underaccent{\bar}{s} $,
\begin{align*} 
\int_{s}^{s+\xi} g_0(y \vee \underaccent{\bar}{s})\,\mathrm{d}y & > (\underaccent{\bar}{s} - s) g_{0}(\underaccent{\bar}{s}) + \int_{\underaccent{\bar}{s}}^{\underaccent{\bar}{s}+\bar{\xi}} g_0(y)\,\mathrm{d}y + (s + \xi - \underaccent{\bar}{s} - \bar{\xi} ) g_{0}(\underaccent{\bar}{s} + \bar{\xi}) \\
& = \int_{\underaccent{\bar}{s}}^{\underaccent{\bar}{s}+\bar{\xi}} g_0(y)\,\mathrm{d}y + (\xi - \bar{\xi}) g_{0}(\underaccent{\bar}{s}),
\end{align*}
where the last equality follows from \eqref{eq:g0-equal}. Then,
\[  
\frac{1}{\xi} \int_{s}^{s+\xi} g_0(y \vee \underaccent{\bar}{s})\,\mathrm{d}y = \frac{1}{\xi}\int_{\underaccent{\bar}{s}}^{\underaccent{\bar}{s}+\bar{\xi}} g_0(y)\,\mathrm{d}y + \frac{\xi - \bar{\xi}}{\xi}  g_{0}(\underaccent{\bar}{s}) > \frac{1}{\bar{\xi}} \int_{\underaccent{\bar}{s}}^{\underaccent{\bar}{s}+\bar{\xi}} g_{0}(y)\,\mathrm{d}y,
\]
where the last inequality follows from \eqref{eq:g0-bar}. Since the above inequality is identical to \eqref{eq:g0-bar}, we deduce that \eqref{eq:gamma-tilde-bound} holds. 
\end{proof}
	
\begin{proof}[Proof of Lemma~\ref{lem:Zm}]
Clearly, $Y_{m}\in\mathcal{U}_{m}$. Both from $Z_{m}(0-)=Z(0-)=x$, these two inventory levels satisfy $Z_{m}(0)\leq Z(0)$ by (\ref{itm:J1})--(\ref{itm:J4}). For $t\geq0$, let 
\[
t_{0}=\sup\{u\in[0,t]:u\in I_{m}\setminus J,\ \Delta Y_{m}(u)>0\},
\]
with the convention $\sup\varnothing=0$. Then, $Z_{m}(t_{0})\leq Z(t_{0})$ by (\ref{itm:J4}). If there is some $u\in(t_{0},t]$ for which $\Delta Y_{m}(u)>0$, it is of type (\ref{itm:J2}) or (\ref{itm:J3}) and thus $\Delta Y_{m}(u)\leq\Delta Y(u)$. Because $Y^{c}$ is nondecreasing, it follows from \eqref{eq:Ymc} that $Y_{m}^{c}(t) - Y_{m}^{c}(t_{0}) \leq Y^{c}(t)-Y^{c}(t_{0})$. Hence, $Z_{m}(t)\leq Z(t)$ for $t\geq0$.
		
Consider some $t\geq0$ for which $Z_{m}(t)<0$. If $x\leq m/2$ and $Z_{m}(u) \leq m/2$ for all $u\in[0,t]$, we have $Y_{m}^{c}(t)=Y^{c}(t)$ by \eqref{eq:Ymc} and $\Delta Y_{m}(u)=\Delta Y(u)$ for all $u\in[0,t]$ by (\ref{itm:J2}). Hence, $Z_{m}(t)=Z(t)$. If there exists some $t_{1}\in[0,t)$ such that $Z_{m}(t_{1})> m/2$, let us consider the time
\[
t_{2}=\sup\{u\in[ t_{1},t):Z_{m}(u)>0\}.
\]
Since $Z_{m}$ does not have downward jumps, we have $Z_{m}(t_{2}-)=0$ and $\Delta Y_{m}(t_{2})=0$, which yields $Z_{m}(t_{2})=0$. Then, $Z(t_{2})\geq0$ because $Z_{m}(t_{2})\leq Z(t_{2})$. If $Z(t_{2})>0$, we can deduce from (\ref{itm:J2})--(\ref{itm:J4}) that $\Delta Y_{m}(t_{2})>0$, a contradiction. Hence, $Z_{m}(t_{2}) = Z(t_{2})=0$. Because $Z_{m}(u)\leq0$ for all $u\in[t_{2},t]$, we have $Y_{m}^{c}(t)-Y_{m}^{c}(t_{2})=Y^{c}(t)-Y^{c}(t_{2})$ by \eqref{eq:Ymc} and $\Delta Y_{m}(u)=\Delta Y(u)$ for all $u \in [t_{2},t]$ by (\ref{itm:J2}). It follows that $Z_{m}(t)=Z(t)<0$ holds. 
\end{proof}
	
\begin{proof}[Proof of Lemma~\ref{lem:triple}]
Let us first prove the uniqueness and monotonicity of the solutions to \eqref{eq:nu}--\eqref{eq:g0-solution}. By \eqref{eq:z-star}, $ g_{0} $ has the minimum value at $ z^{\star} $. Put
\[
I(u)=\int_{g_{0}(z^{\star})}^{u}\Lambda(y)\,\mathrm{d}y \quad \mbox{for $ u \geq g_{0}(z^{\star}) $.}
\]
Then, $I(u)$ is a continuous function of $u$, with $I( g_{0}(z^{\star}))=0$. Because $\Lambda(y)$ is nondecreasing in $y$ and $\Lambda (y) > 0$ for $y > g_{0}(z^{\star}) $, $I(u)$ is strictly increasing in $u$ when $u > g_{0}(z^{\star})$ and $I(u)\rightarrow\infty$ as $u\rightarrow \infty$. Hence, for each $ \kappa \geq 0 $, there is a unique $ \hat{u} \geq g_{0}(z^{\star}) $ such that $I(\hat{u}) = \kappa$. In addition, $\hat{u}$ is strictly increasing in $\kappa$. By \eqref{eq:optimal-barrier} and \eqref{eq:g0}, $ g_{0}(z^{\star}) = h(z^{\star})/\mu $, so the solution to \eqref{eq:nu} is unique and $ \hat{\nu} $ is strictly increasing in $ \kappa $. Note that $ g_{0}(z^{\star}) \leq -k + \hat{\nu}/\mu $. The uniqueness of the solution to \eqref{eq:g0-solution} also follows from \eqref{eq:z-star}. Moreover, $ \hat{s} $ is strictly decreasing in $ \hat{\nu} $ and $ \hat{S} $ is strictly increasing in $ \hat{\nu} $. Then, their monotonicity in $ \kappa $ follows from that of $ \hat{\nu} $. The monotonicity of $ \hat{\xi} $ in $ \kappa $ follows from the fact that $ \hat{\xi} = \hat{S} - \hat{s}$.
		
Next, let us prove the optimality of $ (\hat{s}, \hat{S}) $. When $ \kappa = 0 $, by \eqref{eq:gamma-tilde} and \eqref{eq:theta},
\[  
\theta(\xi) = 
\begin{dcases}
k\mu + \frac{\mu}{\xi}\int_{\tilde{s}(\xi)}^ {\tilde{S}(\xi)} g_{0}(y)\,\mathrm{d}y & \mbox{for $ \xi > 0 $,}\\
k\mu + \mu g_{0}(z^{\star}) & \mbox{for $ \xi = 0 $.}
\end{dcases}
\]
By \eqref{eq:z-star}, $ \theta (0) < \theta (\xi)$ for $ \xi > 0 $, so $ \hat{\xi} = 0 $ is the unique solution to \eqref{eq:theta-xi-hat}. If $ \kappa > 0 $, we obtain $ \theta(0) = \infty $ by \eqref{eq:gamma-tilde} since $ \ell = \infty $. Hence, $ \hat{\xi} = 0 $ if and only if $ \kappa = 0 $. By \eqref{eq:optimal-barrier} and \eqref{eq:g0}, $ \hat{\nu} = k\mu + h(z^{\star})$ and $ \hat{s} = \hat{S} = z^{\star} $, and they satisfy \eqref{eq:nu} and \eqref{eq:g0-solution}, respectively.
		
When $ \kappa > 0 $, the uniqueness of $ \hat{\xi} $ follows from Lemma~\ref{lem:theta-kappa}. It remains to show $ \hat{\nu} $ satisfies \eqref{eq:nu} and $ (\hat{s}, \hat{S}) $ satisfies \eqref{eq:g0-solution}. By \eqref{eq:g0-equal}, \eqref{eq:gamma-tilde}, \eqref{eq:theta}, and the fact that $ L(\hat{\xi}) = \kappa $, we obtain 
\[
\theta(\hat{\xi})=k\mu+\mu g_{0}(  \tilde{s}(\hat{\xi})).
\]
Then, \eqref{eq:g0-solution} follows from \eqref{eq:g0-equal} and \eqref{eq:hat}. By Tonelli's theorem and \eqref{eq:z-star},
\[  
L(\xi) = \int_{g_{0}(z^{\star})}^{g_{0}( \tilde{s}(\xi))} \int_{\tilde{s}(\xi)}^{\tilde{S}(\xi)} 1_{(-\infty, y]}(g_{0}(u)) \,\mathrm{d}u \,\mathrm{d}y = \int_{g_{0}(z^{\star})}^{g_{0}( \tilde{s}(\xi))} \int_{-\infty}^{\infty} 1_{(-\infty, y]}(g_{0}(u)) \,\mathrm{d}u \,\mathrm{d}y = I(g_{0}( \tilde{s}(\xi))).
\]
Using \eqref{eq:g0-solution} and the fact that $ L(\hat{\xi}) = \kappa $, we obtain $ I(-k + \hat{\nu}/\mu ) = \kappa $, so $ \hat{\nu} $ satisfies \eqref{eq:nu}.
\end{proof}
	
\begin{proof}[Proof of Lemma~\ref{lem:theta-kappa}]
By \eqref{eq:z-star}--\eqref{eq:g0-equal}, $g_{0}(y) < g_{0}( \tilde{s}(\xi) ) = g_{0} ( \tilde{S}(\xi) ) $ for $ \tilde{s}(\xi) < y < \tilde{S}(\xi) $. Then by \eqref{eq:g0-equal} and \eqref{eq:s-tilde-prime}, $ L $ is continuous and strictly increasing, with $ L(0) = 0$. By \eqref{eq:g0-limit} and \eqref{eq:s-tilde-infinity}, $ L(\xi) \rightarrow \infty $ as $ \xi \rightarrow \infty $. It follows that for each $ \kappa > 0 $, there is a unique $ \hat{\xi} > 0 $ such that $ L(\hat{\xi}) = \kappa $.
		
By \eqref{eq:g0-equal}, \eqref{eq:gamma-tilde}, and \eqref{eq:theta}, 
\[
\theta(\xi) = k\mu + \frac{\kappa\mu}{\xi}+\frac{\mu}{\xi}\int_{\tilde{s}(\xi)}^ {\tilde{S}(\xi)} g_{0}(y)\,\mathrm{d}y,
\]
the first derivative of which is
\[
\theta^{\prime}(\xi) = -\frac{\kappa\mu}{\xi^{2}} - \frac{\mu}{\xi^{2}} \int_{\tilde{s}(\xi)}^{\tilde{S}(\xi)} g_{0}(y) \,\mathrm{d}y + \frac{\mu} {\xi}\big(  g_{0}(\tilde{S}(\xi))(  \tilde{s}^{\prime} (\xi)+1) - g_{0}(  \tilde{s}(\xi)) \tilde{s}^{\prime} (\xi)\big).
\]
Using \eqref{eq:g0-equal} again, we obtain
\[
\theta^{\prime}(\xi)=-\frac{\kappa\mu}{\xi^{2}}-\frac{\mu}{\xi^{2}} \int_{\tilde{s}(\xi)}^{\tilde{S}(\xi)}g_{0}(y)\,\mathrm{d}y+\frac{\mu}{\xi} g_{0}(  \tilde{s}(\xi))  =\frac{\mu(  L(\xi)-\kappa)  }{\xi^{2}}.
\]
Then, \eqref{eq:theta-monotonicity} follows from the fact that $ L(\hat{\xi}) = \kappa $ and the monotonicity of $ L $.
\end{proof}
	
\begin{proof}[Proof of Lemma~\ref{lem:chi}]
Suppose that there exists some $n\in\mathcal{N}_{<}\setminus\mathcal{N}$ such 	that $\underaccent{\bar}{\chi}(n)$ does not exist, i.e., $K(\xi_{j}^{\star})\neq	K_{j}$ for $j=1,\ldots,n-1$. Since $K_{1}>0$, Lemma~\ref{lem:triple} implies that 	$\hat{\xi}_{1}>0=Q_{0}$, so $1\notin\mathcal{N}_{<}$ and $n\geq2$. Because 	$\xi_{n}^{\star}=Q_{n-1}$ and $K(\xi_{n}^{\star})\neq K_{n}$, $K(\xi_{n}^{\star}) = 	K(Q_{n-1})=K_{n-1}$. If $n-1\in\mathcal{N}_{>}$, we should have	$K(\xi_{n-1}^{\star}) = K(Q_{n-1})=K_{n-1}$, contradicting the hypothesis that $K(\xi_{n-1}^{\star})\neq K_{n-1}$. It follows that $n-1\in\mathcal{N}_{<}\setminus	\mathcal{N}$. By induction, we obtain $\{1,\ldots,n-1\} \subset	\mathcal{N}_{<}\setminus \mathcal{N}$, which contradicts the fact that	$1\notin\mathcal{N}_{<}$. Hence, $\underaccent{\bar}{\chi}(n)$ must exist. 
		
For $n\in\mathcal{N}_{<}\setminus\mathcal{N}$, the above arguments also imply that $\{\underaccent{\bar}{\chi}(n)+1, \ldots, n\} \subset \mathcal{N}_{<}\setminus \mathcal{N}$, which yields $ K_{\underaccent{\bar}{\chi} (n)} < \cdots<K_{n} $. By Lemma~\ref{lem:triple}, $
\hat{\xi}_{\underaccent{\bar} {\chi}(n)} < \hat{\xi}_{\underaccent{\bar}{\chi}(n)+1} \leq Q_{\underaccent{\bar} {\chi}(n)}$, so $\underaccent{\bar}{\chi}(n) \in \mathcal{N}_{=}\cup\mathcal{N}_{<}$. It follows that
\[  
\tilde{\nu}_{n} = \theta_{n}(Q_{n-1}) > \theta_{\underaccent{\bar}{\chi}(n)} (Q_{n-1}) > \theta_{\underaccent{\bar}{\chi}(n)} (\xi_{\underaccent{\bar}{\chi}(n)}^{\star}) = \tilde{\nu}_{\underaccent{\bar}{\chi}(n)},
\]
where the first equality is due to the fact that $ \xi_{n}^{\star} = Q_{n-1}$, the first inequality is due to the fact that $ K_{\underaccent{\bar}{\chi} (n)} <  K_{n} $, and the second inequality is due to \eqref{eq:theta-monotonicity} and the fact that 
\[ 
\hat{\xi}_{\underaccent{\bar}{\chi}(n)} \leq \xi_{\underaccent{\bar}{\chi}(n)}^{\star} < Q_{\underaccent{\bar}{\chi}(n)} \leq Q_{n-1} . 
\]
Since $ \tilde{\nu}_{j} = \nu_{j} $ for $ j \in \mathcal{N} $, we obtain $ \tilde{\nu}_{n} > \nu_{\underaccent{\bar}{\chi}(n)} $.
		
Using the fact that $\hat{\xi}_{N}<Q_{N}=\infty$, we can follow similar arguments 	to prove that $\bar{\chi}(n)$ exists and that $\nu_{\bar{\chi}(n)} < \tilde{\nu}_{n}$ for $n \in \mathcal{N}_{>} \setminus \mathcal{N} $. The details are thus omitted.
\end{proof}

\section*{Acknowledgments}

The work of S.\ He was supported in part by MOE AcRF under grant R266000086112 and by NUS Global Asia Institute under grant R716000006133. The work of D.\ Yao was supported in part by the National Natural Science Foundation of China under grant 11401566.

\bibliographystyle{ormsv080}
\bibliography{refs}

\end{document}